	\theoremstyle{plain}% default
	\newtheorem{thm}{Theorem}[section]
	\newtheorem{lem}[thm]{Lemma}
	\newtheorem{prop}[thm]{Proposition}
	\newtheorem{cor}[thm]{Corollary}
	\theoremstyle{definition}
	\newtheorem{defn}[thm]{Definition}
	\newtheorem{ex}[thm]{Example}
\title{Subhomogeneous Operator Systems and Classification of Operator Systems Generated by $\Lambda$-Commuting Unitaries}
\author{Ran Kiri}
\subjclass[2010]{47A13,46L07}
\keywords{Matrix State Space, Matrix Range, Noncommutative Tori}
\newcommand{\cA}{\mathcal{A}}
\newcommand{\cB}{\mathcal{B}}
\newcommand{\cR}{\mathcal{R}}
\newcommand{\cS}{\mathcal{S}}
\newcommand{\cW}{\mathcal{W}}
\newcommand{\cH}{\mathcal{H}}
\newcommand{\cK}{\mathcal{K}}
\newcommand{\br}{\mathbf{r}}
\newcommand{\bs}{\mathbf{s}}
\NewDocumentCommand{\mat}{mg}{%
  \IfValueTF{#2}{%
    \mathbb{M}_{#1,#2}
  }{%
    \mathbb{M}_{#1}%
  }%
}
\NewDocumentCommand{\UCP}{mg}{%
  \IfValueTF{#2}{%
    {\rm UCP}\left( #1,#2 \right)
  }{%
    {\rm UCP}\left(#1\right)
  }%
}
\NewDocumentCommand{\W}{mg}{%
  \IfValueTF{#2}{%
    \mathcal{W}_{#1}\left(#2\right)
  }{%
    \mathcal{W}\left(#1\right)
  }%
}
\begin{document}
\maketitle
\begin{abstract}
A unital $C^*$-algebra is called $N$-subhomogeneous if its irreducible representations are finite dimensional with dimension at most $N$. We extend this notion to operator systems, replacing irreducible representations by boundary representations. This is done by considering the matrix state space associated with an operator system and identifying the boundary representations as absolute matrix extreme points. We show that two $N$-subhomogeneous operator systems are completely order equivalent if and only if they are $N$-order equivalent. Moreover, we show that a unital $N$-positive map into a finite dimensional $N$-subhomogeneous operator system is completely positive. We apply these tools to classify pairs of $q$-commuting unitaries up to $*$-isomorphism. Similar results are obtained for operator systems related to higher dimensional non-commutative tori.
\end{abstract}
\section{Introduction and preliminaries}
\subsection{Introduction}
Let $q,q'$ be two complex numbers of modulus $1$, and let $(u,v),(u',v')$ be two pairs of $q$-commuting unitaries and $q'$-commuting unitaries, respectively. If $\phi:C^*(u,v)\to C^*(u',v')$ is a $*$-homomorphism such that $\phi(u)=u',\phi(v)=v'$, we get that $q=q'$. Therefore, no $*$-homomorphism can map a $q$-commuting pair to a $q'$-commuting pair, unless of course $q=q'$.\par
Now, consider $\cS(u,v)$ and $\cS(u',v')$, the operator systems generated by such unitaries. One can show (see Theorem \ref{thm:UnitariesUEP} below) that a unital and completely positive map between $\cS(u,v)$ to $\cS(u',v')$ which also maps $u\mapsto u'$,$v\mapsto v'$, extends to a $*$-homomorphism from $C^*(u,v)$ to $C^*(u',v')$. It follows that $q=q'$ whenever such a map exists.\par 
This leads to a natural question. Does the existence of a unital positive map $u\mapsto u',v\mapsto v'$ already implies that $q=q'$? Or maybe being merely positive is not enough, but $n$-positive will do? More generally, we ask when does the $n$-order structure encode all of the information about the complete order structure of an operator system.\par 
One particular case for which we know the answer is the case of operator systems acting on finite dimensional spaces. When $\cS$ is acting on an $N_{\cS}$-dimensional space, Choi proved (see Theorem \ref{thm:ChoiPos}) that any $N_{\cS}$-positive map into $\cS$ is completely positive. In particular, if $\cS,\cR$ both act on a finite dimensional spaces of dimensions $N_{\cS}$ and $N_{\cR}$ respectively, we get that a $\max\{N_{\cS},N_{\cR}\}$-order isomorphism is a complete order isomorphism. There are, however, operator systems which cannot be faithfully represented on a finite dimensional space.\par 
In this paper we show that the answers to the above questions are related to the notion of subhomogeneous $C^*$-algebras, and to an extension of this notion to operator systems. Recall that a unital $C^*$-algebra is called \emph{$n$-subhomogeneous} if its irreducible representations are finite dimensional with dimension at most $n$, and \emph{subhomogeneous} if it is $n$-subhomogeneous for some $n\in\mathbb{N}$. We shall show in Theorem \ref{prop:Nsubhomogeneity}, that a $C^*$-algerba is subhomogeneous if its associated matrix state space, $\UCP{\cS}$, has matrix extreme points at level at most $n$. Inspired by this result we define an $n$-subhomogeneous operator system, replacing irreducible representations by finite dimensional boundary representations.\par 
Using this framework, we show in Theorem \ref{thm:NorderisComplete} that $n$-order isomorphisms between $N_{\cS}$-subhomogeneous and $N_{\cR}$-subhomogenous operator systems (with $N_{\cS},N_{\cR}\leq n$) is a complete order isomorphism, proving that for $N$-subhomogeneous operator systems with $N\leq n$, the $n$-order structure does encode the complete order structure. Moreover, we prove that a unital and $n$-positive map into a finite dimensional $n$-subhomogeneous operator system is completely positive, giving an extension to Choi's theorem.\par 
We then consider finite dimensional operator systems generated by unitaries. We shall prove in Theorem \ref{thm:equivalenceOfUnitaries} that two $d$-tuples of unitaries, generating $N_{\bs}$-subhomogeneous and $N_{\br}$-subhomogeneous operator systems (with $N_{\bs},N_{\br}\leq n$) are $n$-order equivalent if and only if they generate isomorphic $C^*$-algebras.\par 
In Section 5, we return to our motivating question regarding $q$-commuting unitaries. For $\cS=\cS(u,v)$, we provide a characterization of matrix extreme points in $\UCP{\cS}$ in terms of boundary representations for $\cS$, proving that when $q$ is of the form $e^{2\pi i\frac{k}{n}}$ for $(k,n)=1$, $\cS$ is $n$-subhomogeneous and when $q=e^{2\pi i\theta}$ with $\theta\in\qty[0,1]\notin\mathbb{Q}$, $\UCP{\cS}$ is not subhomogeneous (see Theorems \ref{thm:RationalMatrixRange} and \ref{thm:IrrationalMatrixRange}).\par 
As a consequence, we get that if $(u,v)$ is a pair of $q$-commuting unitaries and $(u',v')$ is a pair of $q'=e^{2\pi i\frac{k}{n}}$-commuting unitaries, then the existence of a unital $n$-positive map taking $(u,v)$ to $(u',v')$, implies $q=q'$, and the map is in fact completely positive (this is Theorem \ref{thm:qSeperation}). Similar results are obtained for higher dimensional non-commutative tori in Theorems \ref{thm:LambdaNposIsUCP} and \ref{thm:LambdaEquivalence}.\par 
The results we obtained show that in certain cases, the exitence of an $n$-order isomorphism implies the existence of a complete order isomorphism. It is natural to ask: how low can one take $n$ to be? In Section 6, we treat the question of whether the existence of a unital isometry (which is in particular an order isomorphism) implies a complete order equivalence. We provide a family of examples of $q=e^{2\pi i\frac{k}{n}}$-commuting unitaries which are isometrically isomorphic to a pair of $q=e^{2\pi i\frac{n-k}{n}}$-commuting unitaries (although they are not $n$-order isomorphic).
\subsection{Positive maps and order isomorphisms}
Given a unital $C^*$-algebra $\cA$, we say that $\cS$ is an \emph{operator system} if it contains the unit and it is closed under the $*$-operation. Given a tuple $\bs=\qty(s_1,\dots,s_d)$ of elements in $\cA$, we denote by $\cS(\bs)$ the operator system generated by $\bs$. There is a natural partial ordering endowed on $\cS$, in which $s\geq s'$ if $s-s'$ is positive in $\cA$, which we will call the order-structure or $1$-order structure on $\cS$. Identifying $\cA$ with some subalgebra of $B(\cH$) for some $\cH$, we can endow $\mat{n}(\cA)$ with the order structure from $\mat{n}(B(\cH))\cong B(\cH^n)$, where $\cH^n$ is the $n$-fold direct sum of $\cH$ with itself. This gives us a notion of positivity for elements in $\mat{n}\qty(\cA)$ and therefore for elements in $\mat{n}\qty(\cS)$. We will call the partial ordering endowed on $\mat{n}\qty(\cS)$ the $n$\emph{-order structure} of $\cS$. We shall loosely refer to the totality of $n$-order structures on $\cS$ as the \emph{complete order structure}.\par
For two operator systems $\cS,\cR$, and a linear map $\phi:\cS\to\cR$, we can define $\phi_n:\mat{n}\qty(\cS)\to\mat{n}\qty(\cR)$ for all $n\in\mathbb{N}$ by acting entry-wise, namely, $\phi_n\qty(s_{ij})=\qty(\phi(s_{ij}))$ for all $\qty(s_{ij})\in\mat{n}\qty(\cS)$. We say that $\phi$ is \emph{positive} if $\phi\qty(s)$ is positive in $\cR$ whenever $s$ is positive in $\cS$. We also say that $\phi$ is $n$\emph{-positive} if $\phi_n$ is a positive map, and we say that $\phi$ is completely positive if it is $n$-positive for all $n\in\mathbb{N}$. Note that $n$-positive maps are precisely the maps that preserve that $n$-order structure of $\cS$.\par 

Completely positive maps arise naturally in the study of operator systems through the $C^*$-algebras that they generate. For example, given an operator system $\cS$ contained in some $C^*$-algebra $\cA$, and a $*$-homomorphism $\pi:\cA\to\cB$ for some $C^*$-algebra $\cB$, we know that the restriction $\left.\pi\right|_{\cS}$ is a completely positive map. If we assume that $\cS\subset B(\cH),\cR\subset B(\cK)$ for some Hilbert spaces $\cH,\cK$, any operator $V:\cK\to\cH$ induces a completely positive map $\phi_V:\cS\to B(\cK)$ given by $\phi(s)=V^*sV$. In fact, for an operator system $\cS$, an application of \cite[Theorem 1.2.3]{Arveson69} and \cite[Theorem 1]{Stinespring} shows that any completely positive map $\phi:\cS\to B(\cH)$ is of the form $\phi=V^*\left.\pi\right|_{\cS}(\cdot)V$ where $\pi:C^*(\cS)\to B(\cH')$ is a $*$-homomorphism for some Hilbert space $\cH'$ and $V:\cH\to\cH'$ is a bounded operator.\par 
For $n$-positive maps which are not completely positive, we do not have such a characterization, and it might seem that these maps ``miss'' some of the $C^*$-algebraic structure encoded in $\cS$ about $C^*(\cS)$.
\begin{defn}
Let $\cS,\cR$ be two operator systems and let $\phi:\cS\to\cR$ be a unital linear map. We say that:
\begin{itemize}
\item $\phi$ is an \emph{order isomorphism} if $\phi$ is a bijection and both $\phi,\phi^{-1}$ are positive.
\item $\phi$ is an $n$\emph{-order isomorphism} if $\phi$ is a bijection and both $\phi,\phi^{-1}$ are $n$-positive.
\item $\phi$ is a \emph{complete order isomorphism} if $\phi$ is an $n$-order isomorphism for all $n$.
\end{itemize}
We say that $\cS,\cR$ are \emph{order/$n$-order/completely order isomorphic} if there exists an order/$n$-order/complete order isomorphism between them.
\end{defn}
\begin{defn}
Let $\bs,\br$ be $d$-tuples generating operator systems $\cS,\cR$ respectively. We say that:
\begin{itemize}
\item $\bs,\br$ are \emph{order/$n$-order/completely order equivalent} if there exists an order/$n$-order/complete order isomorphism between $\cS,\cR$ mapping $s_i$ to $r_i$ for all $i$.
\item $\bs,\br$ are \emph{$*$-isomorphic} if there exists a $*$-isomorphism of $C^*(\cS),C^*(\cR)$ mapping $s_i$ to $r_i$ for all $i$.
\end{itemize}
\end{defn}
It is well known that for all $n\in\mathbb{N}$, there are $n$-positive maps which are not completely positive. Indeed, if $N>n$, the map $\phi^{(n)}:\mat{N}\to\mat{N}$ given by $\phi^{(n)}(A)=n{\rm tr}\qty(A)I_N-A$ is an $n$-positive map but not completely positive (see \cite{tomiyama1985geometry} for proof).\par
Note that if $\pi:C^*(\cS)\to \cA$ is a $*$-homomorphism, then $\left.\pi\right|_{\cS}$ is a completely positive map. This means that $\left.\pi\right|_{\cS}$ can be extended to the generated algebra, to a completely positive map which is also multiplicative. Although any completely positive map can be extended from an operator system to the $C^*$-algebra that it generates, there might not be any extension which is multiplicative.\par
In this work we study to what extent does the $n$-order structure determine the complete order structure (and perhaps, the $C^*$-algebraic structure). The following theorem shows a case in which the $n$-order structure does determine the complete order structure.
\begin{thm}[Theorem 5, \cite{choi1972positive}]
\label{thm:ChoiPos}
Let $\cA$ be a unital $C^*$-algebra, and let $\phi:\cA\to\mat{n}$ be $n$-positive. Then, $\phi$ is completely positive.
\end{thm}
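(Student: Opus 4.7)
The plan is to show that $\phi_k$ is positive for every $k\in\N$, bootstrapping from the hypothesis that $\phi_n$ is positive. Given a positive element $[a_{ij}]_{i,j=1}^k\in\mat{k}(\cA)_+$, I first take a $C^*$-algebraic square root $B=[b_{ij}]\in\mat{k}(\cA)$, so that $a_{ij}=\sum_l b_{li}^*b_{lj}$. Consequently $[\phi(a_{ij})]$ decomposes as a sum of matrices of the form $[\phi(b_i^*b_j)]_{i,j=1}^k$ for varying $k$-tuples $(b_1,\dots,b_k)\in\cA^k$, so it suffices to prove that $[\phi(b_i^*b_j)]_{i,j=1}^k\ge 0$ in $\mat{k}(\mat{n})\cong\mat{kn}$ for every such tuple.

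Testing this positivity against an arbitrary vector $\eta=(\eta_1,\dots,\eta_k)\in(\C^n)^k$, the inequality to prove becomes $\sum_{i,j=1}^k\langle\phi(b_i^*b_j)\eta_j,\eta_i\rangle\ge 0$. The key observation is that each $\eta_i$ already lies in the $n$-dimensional space $\C^n$, so this $k$-indexed sum can be ``compressed'' into an $n$-indexed one. Expanding $\eta_i=\sum_{l=1}^n\eta_{il}e_l$ in a fixed orthonormal basis of $\C^n$ and setting $c_l:=\sum_{i=1}^k\eta_{il}b_i\in\cA$ for $l=1,\dots,n$, a direct computation yields
\begin{equation*}
\sum_{i,j=1}^k\langle\phi(b_i^*b_j)\eta_j,\eta_i\rangle
=\sum_{l,m=1}^n\bigl(\phi(c_l^*c_m)\bigr)_{lm}
=\langle\phi_n([c_l^*c_m])v,v\rangle,
\end{equation*}
where $v=\sum_{l=1}^n e_l\otimes e_l\in\C^n\otimes\C^n$ is the maximally entangled vector and $[c_l^*c_m]_{l,m=1}^n\in\mat{n}(\cA)$.

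To finish, I observe that $[c_l^*c_m]_{l,m=1}^n=C^*C$ for the row $C=(c_1,\dots,c_n)\in\mat{1,n}(\cA)$, so this matrix is positive in $\mat{n}(\cA)$. Invoking the hypothesis of $n$-positivity then yields $\phi_n([c_l^*c_m])\ge 0$ in $\mat{n}(\mat{n})$, and thus $\langle\phi_n([c_l^*c_m])v,v\rangle\ge 0$, as required. No serious obstacle arises in executing this plan; the crux is the compression trick that converts a sum of length $k$ into one of length $n$, exploiting the fact that the dimension of the codomain caps the effective complexity of any test vector and thereby ties $k$-positivity for all $k$ back to $n$-positivity.
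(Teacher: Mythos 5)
Your argument is correct: the reduction to matrices of the form $[\phi(b_i^*b_j)]$ via a square root, followed by the compression of a test vector in $(\mathbb{C}^n)^k$ to an $n$-tuple $c_1,\dots,c_n$ and an appeal to $n$-positivity of $\phi_n$ on $C^*C$, is a complete proof. The paper does not prove this statement — it imports it from Choi — and your proof is essentially Choi's original argument (the same one reproduced in Paulsen's book), so there is nothing to reconcile; the only cosmetic remark is that positivity of $\phi_n([c_l^*c_m])$ already gives $\langle\phi_n([c_l^*c_m])w,w\rangle\ge 0$ for every $w$, the maximally entangled vector being merely the particular $w$ your computation produces.
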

A closer look at the proof shows that this remains true if we replace $\cA$ by an operator system $\cS$. Therefore, if $\cS\subset\mat{n}$ and $\cR\subset\mat{m}$ are two operator systems acting on finite dimensional spaces, then begin $\max\{n,m\}$-order isomorphic is equivalent to being completely order isomorphic. This is a case where the $N$-order structure for some (maybe large) $N$, encodes the complete order structure. In Section 3 we find a generalization of this phenomenon.
\subsection{Matrix convex sets and matrix extreme points}
\begin{defn}
Let $V$ be a vector space. A \emph{matrix convex set} over $V$ is a collection $K=\qty(K_n)_{n\in\mathbb{N}}$ of subsets $K_n\subset\mat{n}\qty(V)$ such that:
\begin{equation}
	\label{eq1ch1}
	\sum\limits_{i=1}^k\gamma_i^*v_i\gamma_i\in K_n
\end{equation}
for all $v_i\in K_{n_i}$ and $\gamma_i\in\mat{n_i}{n}$ for $i=1,\dots,k$ satisfying $\sum_{i=1}^k\gamma_i^*\gamma_i=\mathds{1}_n$.
\end{defn}
The sum in (\ref{eq1ch1}) is called a \emph{matrix convex combination}. When $\gamma_i$ is surjective for all $i$ (in particular, $n_i\leq n$) we say that this matrix convex combination is \emph{proper}.\par
When $V$ is a locally convex vector space, we can also say that $K=\qty(K_n)_{n\in\mathbb{N}}$ is a \emph{compact matrix convex set} if each $K_n$ is compact in $\mat{n}\qty(V)$ in the product topology.\par 
The following are the two prime examples of compact matrix convex sets, the first associated with an operator system and the second is associated with a $d$-tuple of operators.
\begin{defn}
Let $\cS$ be an operator system. The \emph{matrix state space} of $\cS$ is the collection $\UCP{\cS}=\qty(\UCP{\cS}{\mat{n}})_{n\in\mathbb{N}}$, where:
\[
	\UCP{\cS}{\mat{n}}=\qty{\phi:\cS\to\mat{n}\middle|\phi\;\mbox{is UCP}}.
\]
\end{defn}
Note that $\UCP{\cS}$ is a matrix convex set over $\cS^*$, and it is compact (see the proof of Lemma 1.2.4 in \cite{Arveson69}).
\begin{defn}
Let $\cA$ be a unital $C^*$-algebra, and let $\bs=\qty(s_1,\dots,s_d)\in\cA^d$ be a $d$-tuple. The \emph{matrix range} of $\bs$ is the collection $\W{\bs}=\qty(\W{n}{\bs})_{n\in\mathbb{N}}$, where:
\[
	\W{n}{\bs}=\qty{\qty(\phi(s_1),\dots,\phi(s_d))\middle|\phi\in\UCP{\cS}{\mat{n}}}
\]
\end{defn}
This a matrix convex set over $\mathbb{C}^d$. Matrix ranges are the typical form of compact matrix convex sets over $\mathbb{C}$ (see \cite[Proposition 31]{loebl1981some}). The following theorem shows that matrix ranges are related to $n$-order and complete order equivalence of $d$-tuples. Since this theorem will be used extensively throughout this work, we present it here as well.
\begin{thm}[Theorem 5.1, \cite{DDSS}]
\label{thm:DDSS}
Let $\bs\in B(\cH)^d,\br\in B(\cH)^d$ be $d$-tuples of operators.
\begin{enumerate}
\item Given $n\in\mathbb{N}$, if there exists a unital $n$-positive map $\phi:\cS(\bs)\to\cS(\br)$ sending $\bs$ to $\br$, then $\W{n}{\br}\subset \W{n}{\bs}$.
\item There exists a UCP map $\phi:\cS(\bs)\to\cS(\br)$ sending $\bs$ to $\br$ if and only if $\W{\br}\subset\W{\bs}$. If $\bs$ is a commuting tuples of normal operators, this inclusion is equivalent to $\sigma\qty(\br)\subset \W{1}{\bs}$, which is also equivalent to $\W{1}{\br}\subset\W{1}{\bs}$.
\item There exists a unital and completely isometric map $\phi:\cS(\bs)\to\cS(\br)$ sending $\bs$ to $\br$ if and only if $\W{\bs}=\W{\br}$.
\end{enumerate}
\end{thm}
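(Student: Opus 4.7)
The plan is to treat the three parts in order, with the substantive work concentrated in part (2)($\Leftarrow$).

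For part (1), fix $(T_1,\dots,T_d)\in\W{n}{\br}$, realized as $\psi(\br)$ for some UCP $\psi:\cS(\br)\to\mat{n}$. The composition $\psi\circ\phi:\cS(\bs)\to\mat{n}$ is $n$-positive, since $(\psi\circ\phi)^{(n)}=\psi^{(n)}\circ\phi^{(n)}$ is a composition of positive maps ($\psi^{(n)}$ positive because $\psi$ is CP, and $\phi^{(n)}$ positive by $n$-positivity of $\phi$). Theorem \ref{thm:ChoiPos} promotes $\psi\circ\phi$ to UCP, and its value on $\bs$ is $\psi(\br)=(T_1,\dots,T_d)\in\W{n}{\bs}$. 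Part (2)($\Rightarrow$) is the same argument at every level $n$, needing only that compositions of UCP maps are UCP.

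Part (2)($\Leftarrow$) is the heart. Given $\W{\br}\subset\W{\bs}$, construct $\Phi:\UCP{\cS(\br)}\to\UCP{\cS(\bs)}$ by sending $\psi\in\UCP{\cS(\br)}{\mat{n}}$ to the unique UCP map $\Phi(\psi):\cS(\bs)\to\mat{n}$ whose image on $\bs$ is $\psi(\br)$; existence uses $\psi(\br)\in\W{n}{\br}\subset\W{n}{\bs}$, and uniqueness comes from the fact that a unital self-adjoint linear map on $\cS(\bs)$ is determined by its values on $\bs$. For a matrix convex combination $\psi=\sum\gamma_i^*\psi_i\gamma_i$, the UCP map $\sum\gamma_i^*\Phi(\psi_i)(\cdot)\gamma_i$ sends $\bs$ to $\psi(\br)$, so by uniqueness it equals $\Phi(\psi)$; hence $\Phi$ is matrix-affine (and continuity is routine). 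By the duality (Webster--Winkler) between UCP maps and continuous matrix-affine maps of matrix state spaces, $\Phi=(\psi\mapsto\psi\circ\phi)$ for a unique UCP $\phi:\cS(\bs)\to\cS(\br)$, and $\phi(\bs)=\br$ follows because matrix states separate points. For the commuting normal case, $C^*(\bs)\cong C(\sigma(\bs))$ and UCP maps from $C(\sigma(\bs))$ into $\mat{n}$ are matrix-valued POVM integrals on $\sigma(\bs)$ (Stinespring/Naimark); thus $\W{\bs}$ is the matrix convex hull of $\sigma(\bs)$, determined by $\sigma(\bs)$ or equivalently by $\W{1}{\bs}=\mathrm{co}(\sigma(\bs))$, and the three conditions $\W{\br}\subset\W{\bs}$, $\W{1}{\br}\subset\W{1}{\bs}$, $\sigma(\br)\subset\W{1}{\bs}$ collapse to one.

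Part (3) follows formally from (2): a unital complete isometry between operator systems is a complete order isomorphism, so both directions are UCP, giving $\W{\bs}=\W{\br}$ by (2). Conversely, matrix range equality yields UCP maps in both directions by (2); their compositions agree with the identity on the generating tuples, hence on the operator systems by linear extension, producing a complete order isomorphism which is in particular a complete isometry. The main obstacle is part (2)($\Leftarrow$): translating matrix range inclusion into a globally defined UCP map on operator systems. The key mechanism is that the inclusion produces level-wise UCP lifts with prescribed generator images, and uniqueness of such lifts compiles these into a coherent matrix-affine map on matrix state spaces, which is exactly what the Webster--Winkler correspondence turns into a UCP map on the operator systems themselves.
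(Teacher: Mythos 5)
First, a caveat: the paper does not prove Theorem \ref{thm:DDSS} at all --- it is imported verbatim from \cite{DDSS} --- so there is no internal proof to compare against. With that said, your arguments for part (1), for both implications of the main biconditional in part (2), and for part (3) are correct. Your route for (2)($\Leftarrow$) --- building a continuous matrix affine map $\UCP{\cS(\br)}\to\UCP{\cS(\bs)}$ level by level out of the inclusion $\W{\br}\subset\W{\bs}$, using that a unital self-adjoint map on $\cS(\bs)$ is determined by its values on the generating tuple, and then converting it into a UCP map through the Webster--Winkler correspondence --- is legitimate, and is in fact the same machinery the paper itself develops later for $N$-order isomorphisms (Propositions \ref{prop:mataffhomImpliesCOI}, \ref{prop:bijectionofMatExt} and \ref{prop:NordisMatAffines}).

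The genuine gap is in the commuting-normal clause of part (2). You argue that since $\W{\bs}$ is the closed matrix convex hull of $\sigma(\bs)$, hence determined by $\W{1}{\bs}$, ``the three conditions collapse to one.'' That inference is invalid: the conditions $\W{\br}\subset\W{\bs}$ and $\W{1}{\br}\subset\W{1}{\bs}$ differ in what they demand of $\W{\br}$ at higher levels, and minimality of $\W{\bs}$ says nothing about $\W{\br}$. Indeed, the equivalence is false with normality imposed only on $\bs$: take $\bs=(M_{x_1},M_{x_2})$ and $\br=(F_1,F_2)$ as in the paper's example in Section 4. There $\W{1}{\br}=\W{1}{\bs}=\bar{B}_2$, and $(F_1,F_2)\in\W{2}{\br}$, yet $(F_1,F_2)\notin\W{2}{\bs}=\cW^{\rm min}(\bar{B}_2)_2$: an element of the latter is a compression of a pair of commuting self-adjoints with joint spectrum in $\bar{B}_2$, which would force $\norm{F_1+iF_2}\leq 1$, whereas $\norm{F_1+iF_2}=2$. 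The normality hypothesis has to sit on $\br$: when $\br$ is a commuting normal tuple, $\W{\br}$ is the \emph{smallest} closed matrix convex set containing $\sigma(\br)$ at level one, so $\sigma(\br)\subset\W{1}{\bs}$ forces $\W{\br}\subset\W{\bs}$ by minimality, and only then do the three conditions become equivalent. (The statement as printed in the paper appears to attach normality to the wrong tuple; your proof inherits that slip rather than repairing it, and the one-line ``collapse'' argument would not survive the correction without invoking minimality of $\W{\br}$ explicitly.)
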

\begin{defn}
Let $K=\qty(K_n)_{n=1}^{\infty}$ and $L=\qty(L_n)_{n=1}^{\infty}$ be two matrix convex sets over vector spaces $V$ and $W$, respectively. A \emph{matrix affine map} from $K$ to $L$ is a collection $\bm{\theta}=\qty(\theta_n)_{n=1}^{\infty}$ of mappings $\theta_n:K_n\to L_n$ such that:
\[
	\theta_n\qty(\sum_{i=1}^k\gamma_i^*v_i\gamma_i)=\sum\limits_{i=1}^k\gamma_i^*\theta_{n_i}\qty(v_i)\gamma_i
\]
for all $v_i\in K_{n_i}$ and $\gamma_i\in\mat{n_i}{n}$ for $i=1,\dots,k$ satisfying $\sum_{i=1}^k\gamma_i^*\gamma_i=\mathds{1}_n$. When $V,W$ are both topological vector spaces, we say that $\bm{\theta}$ is continuous if $\theta_n$ is continuous for all $n\in\mathbb{N}$. Such a map is called a \emph{matrix affine homeomorphism} if each $\theta_n$ is a homeomorphism.
\end{defn}
\begin{ex}
Let $\bs$ be a $d$-tuple and set $\cS=\cS\qty(\bs)$ to be the operator system they generate. The collection $\bm{\theta}=\qty(\theta_n)_{n=1}^{\infty}$ of mappings $\theta_n:\UCP{\cS}{\mat{n}}\to\W{n}{\bs}$ defined by $\theta_n\qty(\phi)=\qty(\phi(s_1),\dots,\phi(s_d))$ is a matrix affine homeomorphism. Indeed, it is surjective by the definition of $\W{\bs}$, and it is injective because positive maps are self-adjoint, which means that any $\phi\in\UCP{\cS}{\mat{n}}$ is uniquely defined by the values of $\phi(s_i)$ for all $i$. Finally, $\theta_n$ is continuous because the topology induced on $\UCP{\cS}{\mat{n}}$ is the weak* topology, which means that $\phi_m$ converges to $\phi$ if and only if $\phi_m(s)$ converges in $\mat{n}$ to $\phi(s)$ for all $s\in \cS$, which in particular means that $\phi_m(s_i)$ converges to $\phi(s_i)$ for all $i$ (and therefore $\theta_n(\phi_m)$ converges to $\theta_n(\phi)$). $\theta_n$ is a continuous bijection from a compact space to a Hausdorff space, so $\theta_n^{-1}$ is also continuous.\par 
This shows that we can switch between the two perspectives when studying operator systems generated by $d$-tuples of operators.
\end{ex}
Given a compact matrix convex set $K=\qty(K_n)_{n=1}^{\infty}$, $A(K)$ denotes the set of all functions $F=\qty(F_n)_{n=1}^{\infty}$ with $F_n:\mat{n}\qty(K)\to\mat{n}$ such that $F_1$ is continuous, and $F$ is matrix affine, in the sense that:
\[
	F_n\qty(\sum\limits_{i=1}^k\gamma_i^*v_i\gamma_i)=\sum\limits_{i=1}^k\gamma_i^*F_{n_i}(v_i)\gamma_i,
\]
for every $v_i\in K_{n_i},\gamma_i\in\mat{n_i}{n}$ such that $\sum_{i=1}^k\gamma_i^*\gamma_i=\mathds{1}_n$. In the discussion leading to \cite[Proposition 3.5]{WebWink}, it was shown that $A(K)$ can be endowed with a structure of an abstract operator system. The following proposition shows us a deep connection between matrix state spaces of operator systems and their complete order structure.
\begin{prop}[Proposition 3.5, \cite{WebWink}]
\label{prop:WebWink35}
\begin{enumerate}
\item If $\cR$ is an operator system, then $\UCP{\cR}$ is a self-adjoint compact matrix convex set in $\cR^*$, equipped with the weak* topology, and $A(\UCP{\cR})$ and $\cR$ are isomorphic as operator systems.
\item If $K=\qty(K_n)_{n=1}^{\infty}$ is a compact matrix convex set in a locally convex space $V$, then $A\qty(K)$ is an operator system, and $K$ and $\UCP{A(K)}$ are matrix affinely homeomorphic as operator systems.
\end{enumerate}
\end{prop}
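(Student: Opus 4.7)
The proof establishes a noncommutative Kadison-type duality, and I would treat the two parts in sequence using dual constructions: the matrix state space of an operator system, and matrix affine functions on a compact matrix convex set.

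For part (1), I would first check that $\UCP{\cR}$ is a compact, self-adjoint, matrix convex set. At each level $n$, any UCP map $\phi : \cR \to \mat{n}$ satisfies $\|\phi\| = \|\phi(1)\| = 1$, so weak-$*$ compactness of $\UCP{\cR}{\mat{n}}$ follows from Banach--Alaoglu applied entrywise. Matrix convexity holds because $\sum_i \gamma_i^*\phi_i(\cdot)\gamma_i$ is UCP whenever each $\phi_i$ is and $\sum_i \gamma_i^*\gamma_i = \mathds{1}_n$; self-adjointness follows from the fact that UCP maps preserve involutions. Next I would construct the evaluation map $\Phi:\cR \to A(\UCP{\cR})$ by $\Phi(r)_n(\phi) = \phi(r)$; that $\Phi(r)$ is matrix affine is immediate from the linearity of each $\phi$. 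The core task is to show that $\Phi$ is a unital complete order isomorphism. Preservation of matrix positivity in the forward direction is automatic; the converse---that $r \in \mat{n}\qty(\cR)$ is positive whenever $\phi_n(r) \geq 0$ for every UCP $\phi$ into matrices---is a standard Hahn--Banach separation, depending on Arveson's extension theorem to produce enough matrix states. Surjectivity of $\Phi$ is the main obstacle: one must recover an element $r \in \cR$ from a matrix affine continuous $F$ on $\UCP{\cR}$. I would first apply the classical Kadison function representation at level $1$ to obtain $r$ from $F_1$, and then use matrix affinity and continuity to verify $\Phi(r)_n = F_n$ at higher levels.

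For part (2), I would begin by fixing the operator system structure on $A(K)$ as described immediately before the proposition: the unit is the constant $\mathds{1}$, the involution sends $F$ to $F^*$ with $F^*_n(v) = F_n(v)^*$, and a tuple $F \in \mat{n}\qty(A(K))$ is declared positive exactly when $F_m(v) \in \mat{nm}$ is positive for every $m$ and every $v \in K_m$. The matrix order axioms and the Archimedean property reduce to pointwise computations. I would then introduce the natural evaluation $\iota_n : K_n \to \UCP{A(K)}{\mat{n}}$ by $\iota_n(v)(F) = F_n(v)$; unitality and complete positivity follow from the definitions, and $\iota = (\iota_n)$ is matrix affine because every $F \in A(K)$ is. Continuity of each $\iota_n$ is immediate from the weak-$*$ topology on the codomain. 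The substantive work is to prove $\iota_n$ bijective. Injectivity reduces to $A(K)$ separating points of $K_n$, a matrix Hahn--Banach type statement of Effros--Winkler. Surjectivity---that every UCP $\phi : A(K) \to \mat{n}$ equals some $\iota_n(v)$---is the principal obstacle and the heart of the duality; I would prove it by embedding $K$ into $A(K)^*$ as a matrix convex set and invoking the matrix bipolar theorem to identify $K_n$ with the full matrix state space. Since $\iota_n$ is then a continuous bijection from a compact space onto a Hausdorff space, it is automatically a homeomorphism, completing the argument.
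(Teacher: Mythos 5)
This proposition is quoted verbatim from Webster--Winkler (their Proposition 3.5) and the paper offers no proof of its own, so there is nothing internal to compare your argument against; what you have written is, in outline, the standard noncommutative Kadison duality argument from the cited source, and it is essentially correct. Two small points deserve attention. First, in part (1) the converse positivity statement does not really rest on Arveson's extension theorem: the right tool is either the correspondence between positive linear functionals on $\mat{n}\qty(\cR)$ and completely positive maps $\cR\to\mat{n}$, or, more concretely, a compression of a vector state of $\mat{n}\qty(B(\cH))$ witnessing non-positivity down to the ($\leq n$-dimensional) span of its defining vectors. Second, your surjectivity step in part (1) is fine but leans on two things worth making explicit: Kadison's function representation at level $1$ requires $\cR$ to be norm-complete (harmless under the standing convention that operator systems are closed, and unavoidable, since $A(\UCP{\cR})=A(\UCP{\overline{\cR}})$); and the passage from $F_1=\hat{r}$ to $F_n(\phi)=\phi(r)$ is exactly the compression identity $\xi^*F_n(\phi)\xi=F_1(\xi^*\phi\xi)=\xi^*\phi(r)\xi$ for unit vectors $\xi\in\C^n$ together with polarization --- this also explains why the definition of $A(K)$ only demands continuity of $F_1$. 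Your treatment of part (2), identifying the Effros--Winkler separation/bipolar theorem as the engine behind surjectivity of the evaluation $K_n\to\UCP{A(K)}{\mat{n}}$ and getting the homeomorphism for free from compactness, is exactly the route taken in the literature.
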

We will also make use of the particular isomorphism which appeared in the proposition. The set $A(K)$ has a positive cone in which $F$ is positive if $F_n\qty(v)$ is positive for all $n\in\mathbb{N},v\in K_n$. In the case of $A(\UCP{\cS})$ and $\cS$, The complete order isomorphism of $\cS$ and $A(\UCP{\cS})$, is implemented by the mapping $s\mapsto\delta_{\cS}=\qty(\qty(\delta_{\cS})_n)_{n=1}^{\infty}$, where:
\[
	\qty(\delta_{\cS})_n\qty(\phi)=\phi\qty(s)
\]
for all $\phi\in\UCP{\cS}{\mat{n}}$. This gives us a concrete way of studying $\cS$ through its matrix state space.\par
When we shift our perspective from an operator system to its matrix state space, we can use the theory of matrix convex sets to study it. In particular, in Section 2, we use special points in $\UCP{\cS}$ which are the \emph{matrix extreme points} to study the structure of $\UCP{\cS}$.
\begin{defn}
Let $K=\qty(K_n)_{n=1}^{\infty}$ be a matrix convex set and let $v\in K_n$ for some $n\in\mathbb{N}$. We say that $v$ is a \emph{matrix extreme point} in $K$, if whenever $v$ is a proper matrix convex combination:
\[
	v=\sum\limits_{i=1}^k\gamma_i^*v_i\gamma_i
\]
then for all $i$, $n_i=n$ and $v=u_i^*v_iu_i$ for some unitary $u_i\in\mat{n}$.
\end{defn}
We set $\partial K=\qty(\partial K_n)_{n=1}^{\infty}$ to denote the set of matrix extreme points in $K_n$ for all $n\in\mathbb{N}$.\par Webster and Winkler proved in \cite[Theorem 4.3]{WebWink} an analogue of the Krein-Milman theorem for matrix extreme points in a compact matrix convex set, namely that any compact matrix convex set is the closed matrix convex hull of its matrix extreme points. In that sense, matrix extreme points are analogous to extreme points in locally convex vector spaces. There is another, stronger notion of matrix extreme points introduced in \cite{evert2018extreme}, which are called absolute matrix extreme points.
\begin{defn}
Let $K=\qty(K_n)_{n=1}^{\infty}$ be matrix convex and let $v\in K_n$ for some $n\in\mathbb{N}$. We say that $v$ is an \emph{absolute matrix extreme point} in $K$ if whenever $v$ is a matrix convex combination (not necessarily proper):
\[
	v=\sum\limits_{i=1}^k\gamma_i^*v_i\gamma_i
\]
such that $\gamma_i$ are all non-zero, then for all $i$:
\begin{itemize}
\item if $n_i\leq n$, then $n_i=n$ and $v=u_i^*v_iu_i$ for some unitary $u_i\in\mat{n}$.
\item if $n_i>n$, then there exists some $w_i\in K_{n_i-n}$ such that $v_i=u_i^*(v\oplus w_i)u_i$ for some unitary $u_i\in K_{n_i}$.
\end{itemize}
The set of absolute matrix extreme points in $K$ is denoted by ${\rm Abex}(K)$.
\end{defn}
In other words, matrix extreme points are points which cannot be written as matrix combinations from below, except in a trivial way, and absolute matrix extreme points are points which cannot be written as matrix combination from above or below, except in a trivial way. It is obvious that an absolute matrix extreme point is in particular matrix extreme.\par 
Unlike matrix extreme points, absolute matrix extreme points do not give a nice generalization of the Krein-Milman theorem for any compact matrix convex set. In fact, there are compact matrix convex sets which do not have any absolute matrix extreme points (see \cite[Example 6.30]{Kriel}, and also Theorem \ref{thm:IrrationalMatrixRange}). But as we will see in Section 2, when considering $\UCP{\cS}$ for some operator system $\cS$, the matrix extreme points are pure UCP maps, and absolute matrix extreme points are restrictions of boundary representations for $\cS$, which makes it easier to identify them.
\section{Matrix extreme points in compact matrix convex sets}
In this section we prove the following theorem:
\begin{thm}
\label{thm:absoluteIsBoundary}
Let $\cS$ be an operator system and let $\cA=C^*(\cS)$. A point $\phi\in\UCP{\cS}{\mat{n}}$ is an absolute matrix extreme points of $\UCP{\cS}$ if and only if it extends to a boundary representation on $\cA$.
\end{thm}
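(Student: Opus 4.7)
For $(\Leftarrow)$, suppose $\phi=\pi|_{\cS}$ for a boundary representation $\pi\colon\cA\to\mat{n}$, and let $\phi=\sum_{i=1}^{k}\gamma_i^*\phi_i\gamma_i$ be any matrix convex combination with each $\gamma_i$ nonzero and $\phi_i\in\UCP{\cS}{\mat{n_i}}$. The plan is to extend each $\phi_i$ to $\tilde\phi_i\colon\cA\to\mat{n_i}$ by Arveson's extension theorem; by the unique extension property of $\pi$, the UCP extension $\sum_i\gamma_i^*\tilde\phi_i(\cdot)\gamma_i$ must agree with $\pi$ on all of $\cA$. Assembling the $\tilde\phi_i$ into a block UCP map $T=\bigoplus_i\tilde\phi_i$ and the $\gamma_i$ into a stacked isometry $\Gamma\colon\C^n\to\bigoplus_i\C^{n_i}$ so that $\pi=\Gamma^*T\Gamma$, the multiplicativity of $\pi$ sandwiched between Kadison--Schwarz for $T$ and the inequality $\Gamma\Gamma^*\leq I$ forces equality throughout, yielding the intertwining $\tilde\phi_i(a)\gamma_i=\gamma_i\pi(a)$ for each $i$. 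Consequently $\gamma_i^*\gamma_i$ commutes with $\pi(\cA)=\mat{n}$, and irreducibility of $\pi$ forces $\gamma_i^*\gamma_i=c_iI$ for some $c_i>0$.

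The case analysis is then immediate. In Case 1 ($n_i\leq n$), the rank of $\gamma_i$ forces $n_i=n$ and $u_i:=c_i^{-1/2}\gamma_i\in\mat{n}$ is unitary, so the intertwining gives $\tilde\phi_i=u_i\pi(\cdot)u_i^*$, a representation, and restriction to $\cS$ yields $\phi=u_i^*\phi_iu_i$. In Case 2 ($n_i>n$), $u_i':=c_i^{-1/2}\gamma_i$ is an isometry whose range is $\tilde\phi_i(\cA)$-invariant by the intertwining; self-adjointness of $\tilde\phi_i$ propagates invariance to the orthogonal complement, yielding $\tilde\phi_i=u_i^*(\pi\oplus\tilde w_i)u_i$ for a suitable unitary $u_i\in\mat{n_i}$ and some UCP map $\tilde w_i\colon\cA\to\mat{n_i-n}$. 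Restricting to $\cS$ gives $\phi_i=u_i^*(\phi\oplus w_i)u_i$ as required.

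For $(\Rightarrow)$, suppose $\phi$ is absolute matrix extreme, let $\tilde\phi\colon\cA\to\mat{n}$ be any UCP extension by Arveson, and fix a minimal Stinespring dilation $\tilde\phi=V^*\pi V$ with isometry $V\colon\C^n\to\cH$. The identity $\phi=V^*(\pi|_{\cS})V$ is a single-term matrix convex combination, so in the finite-dimensional case $\dim\cH=m>n$, Case 2 of absolute matrix extremeness yields $\pi|_{\cS}=u^*(\phi\oplus w)u$ for a unitary $u\in\mat{m}$. Because $\pi$ is a $*$-representation and $\cS$ generates $\cA$, the block-diagonal structure on the generators extends to all of $\cA$, so $u\pi u^*=\pi_1\oplus\pi_2$ as representations and $\pi_1\colon\cA\to\mat{n}$ is a representation extending $\phi$. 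Irreducibility of $\pi_1$ then follows from another application of absolute matrix extremeness: a nontrivial reducing decomposition would present $\phi$ as a proper matrix convex combination at levels strictly smaller than $n$, contradicting Case 1.

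The main obstacle is to upgrade $\pi_1$ to a boundary representation by establishing its unique extension property, and in parallel to handle the infinite-dimensional Stinespring case. For UEP, given a second UCP extension $\psi\colon\cA\to\mat{n}$ of $\phi$, the plan is to apply absolute matrix extremeness to the combination $\phi=\gamma^*(\phi\oplus\phi)\gamma$ with $\gamma=\tfrac{1}{\sqrt{2}}\bigl(\begin{smallmatrix}I\\I\end{smallmatrix}\bigr)$ inside the UCP map $\psi\oplus\pi_1$, and then rerun the intertwining analysis of $(\Leftarrow)$ using the multiplicativity of the $\pi_1$-summand to force $\psi=\pi_1$. For the infinite-dimensional Stinespring, the strategy is to compress $\pi$ by finite-rank projections $P$ containing $V\C^n$, apply absolute matrix extremeness to the resulting finite-level UCP map $(P\pi P)|_{\cS}\in\UCP{\cS}{\mat{\mathrm{rank}(P)}}$, and exploit minimality of the dilation --- any summand of $\pi$ unseen by $V$ would contradict minimality --- to force $\cH=V\C^n$ so that $\tilde\phi$ is already a $*$-representation.
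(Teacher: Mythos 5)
Your backward direction ($\Leftarrow$) is correct and takes a genuinely different route from the paper's: instead of quoting that restrictions of boundary representations are pure and then using purity to force $\gamma_i^*\phi_i\gamma_i=t\phi$, you run a self-contained multiplicative-domain argument (unique extension property identifies $\sum_i\gamma_i^*\tilde\phi_i(\cdot)\gamma_i$ with $\pi$, Kadison--Schwarz squeezed against $\Gamma\Gamma^*\leq I$ yields the intertwining $\tilde\phi_i(a)\gamma_i=\gamma_i\pi(a)$, and irreducibility pins down $\gamma_i^*\gamma_i=c_iI$). That argument is sound and buys you independence from Farenick--Tessier's purity result in this direction.

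The forward direction, however, has a genuine gap, concentrated exactly where the paper leans on the pure/maximal machinery. First, your proof of the unique extension property does not work: applying absolute matrix extremeness to $\phi=\gamma^*\bigl((\psi\oplus\pi_1)|_{\cS}\bigr)\gamma$ with $\gamma=\tfrac{1}{\sqrt2}\bigl(\begin{smallmatrix}I\\I\end{smallmatrix}\bigr)$ only tells you that $\phi\oplus\phi$ is unitarily equivalent to $\phi\oplus w$ for some $w$, which is a tautology (take $w=\phi$, $u=I$) and says nothing about $\psi$; and you cannot ``rerun the intertwining analysis,'' because that analysis required the compressed map $\Gamma^*T\Gamma$ to be multiplicative, whereas here the compressed map is $\tfrac12(\psi+\pi_1)$, whose multiplicativity is precisely what is unknown. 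Indeed, a second UCP extension $\psi$ of $\phi$ agrees with $\phi$ on $\cS$, so it never presents itself as a nontrivial matrix convex combination in $\UCP{\cS}$; uniqueness of the extension cannot be extracted from the definition of absolute extremeness alone. Second, in both the minimality step and the infinite-dimensional Stinespring case, you need the unitary $u$ produced by absolute extremeness to carry $V\C^n$ onto the distinguished $\phi$-summand; it need not, so you cannot conclude that $V\C^n$ reduces $\pi(\cS)$, and the appeal to minimality does not close. What actually closes these gaps is the paper's route: matrix extreme points of $\UCP{\cS}$ are pure (Theorem \ref{thm:matExtremeArePure}); a pure non-maximal map at level $n$ dilates to a \emph{pure} map at level $n+1$ (\cite[Lemma 2.3]{davidson2015choquet}), which absolute extremeness forces to be $U^*(\phi\oplus\rho)U$ and hence not pure --- a contradiction establishing maximality; and pure plus maximal implies the (unique) extension to a boundary representation (\cite[Lemma 2.1]{davidson2015choquet} together with Proposition \ref{prop:UEPiffMaximal}). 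Your finite-dimensional construction of \emph{an} irreducible representation $\pi_1$ extending $\phi$ is fine as far as it goes, but a boundary representation requires uniqueness of the UCP extension, and that is the part your argument does not reach.
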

Several proofs for particular cases of this result appear in the literature. In \cite[Corollary 6.28]{Kriel}, there is a proof of this theorem for operator systems generated by self-adjoint matrices, and in \cite[Theorem 4.2]{kleski2014boundary}, there is a version of this theorem for general compact matrix convex sets $K$ such that $A(K)$ acts on a finite dimensional space. Another matricial version appears in \cite[Theorem 3.10]{evert2018extreme}.\par 
Before getting to the proof we will need some terminology. Recall that a UCP map $\phi:\cS\to B(\cH)$ is called \emph{pure} if whenever $\psi$ is a CP map such that $\phi-\psi$ is CP, then $\psi=t\phi$ for $t\in[0,1]$. Given some $s\in\cS,h\in\cH$, we say that a UCP map $\phi:\cS\to B(\cH)$ is \emph{maximal} at $(s,h)$ if whenever $\phi(\cdot)=V^*\psi(\cdot)V$ for an isometry $V:\cH\to \cK$ and $\psi\in\UCP{\cS}{B(\cK)}$, we get that $\norm{\phi(s)h}=\norm{\psi(s)Vh}$. A UCP map which is maximal at every $(s,h)\in \cS\times\cH$ is called \emph{maximal}. Note that $\phi$ is maximal if and only if whenever $\phi(\cdot)=V^*\psi(\cdot)V$ for some isometry $V:\cH\to\cK$, then $\psi(\cdot)=V\phi(\cdot)V^*\oplus\rho(\cdot)$ for some UCP $\rho:\cS\to B((V\cH)^{\perp})$.\par 
A UCP map $\phi:\cS\to B(\cH)$ is said to have the \emph{unique extension property} if it has a unique UCP extension to some $\Phi:C^*(\cS)\to B(\cH)$, and $\Phi$ is multiplicative. Lastly, we say that a $*$-representation $\pi:C^*(\cS)\to B(\cH)$ is a \emph{boundary representation} for $\cS$ if it is irreducible, and $\left.\pi\right|_{\cS}$ has the unique extension property.\par 
We will now state some known results which will be needed for our proof.
\begin{thm}[Theorem B, \cite{farenick2000extremal}]
\label{thm:matExtremeArePure}
Let $\cS$ be an operator system in a unital $C^*$-algebra $\cA$. Then:
\begin{enumerate}
\item A matrix state $\phi$ on $\cS$ is a matrix extreme point of $\UCP{\cS}$ if and only if $\phi$ is pure.
\item Every pure UCP map $\phi\in\UCP{\cS}$ extends to a pure map $\Phi\in\UCP{\cA}$.
\end{enumerate}
\end{thm}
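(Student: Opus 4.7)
The plan is to prove (1) in both directions and then deduce (2) from a brief revisit of the minimal Stinespring of a pure map. The easier half of (1), that a pure $\phi$ is matrix extreme, is a rank argument. Given a proper matrix convex combination $\phi = \sum_i \gamma_i^* \phi_i \gamma_i$, each summand $\gamma_i^* \phi_i \gamma_i$ is CP and dominated by $\phi$, so purity forces it to equal $t_i \phi$ for some $t_i \ge 0$. Evaluating at the unit yields $\gamma_i^* \gamma_i = t_i \mathds{1}$; the left-hand side has rank $n_i$ by surjectivity of $\gamma_i$, while the right-hand side has rank $n$ whenever $t_i > 0$. Hence $n_i = n$ and $\gamma_i / \sqrt{t_i}$ is unitary, giving $\phi = u_i^* \phi_i u_i$ as required.

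The converse direction, matrix extreme implies pure, is the substantial part. Let $0 \le \psi \le \phi$ be CP. I will invoke as a technical engine the following rank-factorization lemma: any CP map $\rho: \cS \to \mat{n}$ with $\rho(\mathds{1}) = c$ of rank $m$ can be written as $\rho = \gamma^* \rho_0 \gamma$, where $\gamma \in \mat{m}{n}$ is surjective with $\gamma^*\gamma = c$ and $\rho_0: \cS \to \mat{m}$ is UCP; this follows by writing the Stinespring dilation of $\rho$ and compressing to the range of the dilation map. Applying this lemma to both $\psi$ and $\phi - \psi$ gives a proper matrix convex decomposition of $\phi$ whose block sizes are the ranks of $\psi(\mathds{1})$ and $(\phi - \psi)(\mathds{1})$. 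Matrix extremality forces both ranks to equal $n$, so $a := \psi(\mathds{1})$ and $\mathds{1} - a$ are invertible (the degenerate cases $\psi = 0$ or $\psi = \phi$ being immediate). Matrix extremality then upgrades the decomposition to $\psi_0 := a^{-1/2} \psi(\cdot) a^{-1/2} = u \phi u^*$ for some unitary $u \in \mat{n}$, and similarly for $\phi - \psi$, producing the operator identity $\phi(s) = A \phi(s) A^* + B \phi(s) B^*$ for all $s \in \cS$, with $A = a^{1/2} u$, $B = (\mathds{1} - a)^{1/2} w$, and $AA^* + BB^* = \mathds{1}$.

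The main obstacle is to pass from this operator identity to the scalar conclusion $a = t \mathds{1}$. The strategy is to compare the minimal Stinespring $(\pi, V, \cK)$ of $\phi$ with the Stinespring $(\pi \oplus \pi, (VA^*, VB^*)^T, \cK \oplus \cK)$ produced by the identity: uniqueness of minimal Stinespring yields an intertwining isometry $W: \cK \to \cK \oplus \cK$ whose components lie in $\pi(\cA)'$. Provided $\pi$ is irreducible, Schur's lemma forces these components to be scalars $\alpha_1 \mathds{1}, \alpha_2 \mathds{1}$, so the equations $\alpha_1 V = V A^*$ and $\alpha_2 V = V B^*$ give $A, B$ as scalar multiples of $\mathds{1}$, and hence $a = |\alpha_1|^2 \mathds{1}$ is scalar. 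Irreducibility of $\pi$ is itself extracted from matrix extremality: given any non-trivial projection $T \in \pi(\cA)'$, one applies the rank-factorization lemma to the Radon--Nikodym derivatives $V^* T \pi(\cdot) V$ and $V^* (\mathds{1} - T) \pi(\cdot) V$, after an $\epsilon \phi$-perturbation to dodge degeneracies, to obtain either an immediate contradiction to matrix extremality or (by the scalar argument above) to force $T$ to be scalar. Once $a = t \mathds{1}$, we conclude $\psi = t \phi$ and $\phi$ is pure.

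For part (2), given pure $\phi \in \UCP{\cS}{\mat{n}}$ with minimal Stinespring $(\pi, V, \cK)$, the proof of (1) ensures $\pi$ is irreducible. Define $\Phi: \cA \to \mat{n}$ by $\Phi(a) := V^* \pi(a) V$; this is UCP and extends $\phi$. The cyclic condition $\cK = \overline{\pi(\cA) V \C^n}$ for the minimal Stinespring of $\phi$ is identical to the one for $\Phi$, so $\pi$ is the minimal Stinespring of $\Phi$ as well. Arveson's Radon--Nikodym theorem combined with irreducibility of $\pi$ then forces every CP map dominated by $\Phi$ to be a scalar multiple of $\Phi$, showing $\Phi$ is pure.
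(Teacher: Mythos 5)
This statement is quoted in the paper as Theorem B of Farenick's \emph{Extremal matrix states on operator systems} and is used as a black box; the paper contains no proof of it, so there is nothing internal to compare your argument with. Judged on its own merits, your easy direction (pure $\Rightarrow$ matrix extreme), the rank-factorization lemma, and the reduction of the hard direction to the identity $\phi(s)=A\phi(s)A^*+B\phi(s)B^*$ with $AA^*+BB^*=\mathds{1}$ and $A,B$ invertible are all fine. The proof breaks at the next step.

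The gap is the passage from that identity to ``$A$ and $B$ are scalar.'' You invoke uniqueness of the minimal Stinespring dilation to produce an intertwining isometry between $(\pi,V,\cK)$ and $(\pi\oplus\pi,(VA^*,VB^*)^T,\cK\oplus\cK)$. But Stinespring's theorem and its uniqueness clause concern CP maps on a $C^*$-algebra, and your identity is only known to hold for $s\in\cS$. What you actually know is that $a\mapsto A\Phi(a)A^*+B\Phi(a)B^*$ (with $\Phi$ the Arveson extension whose Stinespring you took) is \emph{some} UCP extension of $\phi$ to $\cA$; it need not coincide with $\Phi$ on $\cA$, and two distinct extensions of the same map on $\cS$ generally have inequivalent minimal dilations with no intertwiner between them. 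The same defect undermines your irreducibility step, which is moreover circular as written: you establish irreducibility of $\pi$ ``by the scalar argument above,'' while the scalar argument opens with ``provided $\pi$ is irreducible.'' Part (2) inherits the problem: the claim that the minimal Stinespring of an extension of a pure $\phi$ is irreducible is essentially the content of (2). Purity of $\phi$ on $\cS$ only yields $V^*(T-t\mathds{1})\pi(s)V=0$ for $s\in\cS$ and $T\in\pi(\cA)'$, and since $\cS$ is not an algebra you cannot run the usual cyclicity computation (which needs $\pi(b)^*(T-t\mathds{1})\pi(a)=(T-t\mathds{1})\pi(b^*a)$ with $b^*a$ back in the domain of the hypothesis) to conclude $T=t\mathds{1}$. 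The distinction between ``pure on $\cS$'' and ``pure on $\cA$'' is exactly the nontrivial content of Farenick's theorem; bridging it requires a genuine extension argument (e.g.\ producing a pure extension as a suitable extreme point of the compact convex set of all CP extensions of $\phi$, via a Radon--Nikodym analysis), not a formal comparison of Stinespring dilations that are only known to agree on $\cS$.
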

The following proposition is due to Farenick and Tessier, showing that restrictions of boundary representations are always pure and thus matrix extreme:
\begin{prop}[Proposition 2.12, \cite{farenick2022purity}]
\label{prop:boundaryRepsArePure}
Let $\cA$ be a unital $C^*$-algebra generated by an operator system $\cS$ and let $\pi:\cA\to B(\cK)$ be a boundary representation for $\cS$. Then, $\left.\pi\right|_{\cS}$ is pure.
\end{prop}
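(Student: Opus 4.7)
The plan is a three-step reduction. First, I would lift the hypothetical CP decomposition of $\left.\pi\right|_{\cS}$ from $\cS$ to $\cA$ using Arveson's extension theorem; second, invoke the unique extension property (UEP) of $\left.\pi\right|_{\cS}$ to identify this lift with $\pi$ itself; and third, apply Arveson's noncommutative Radon--Nikodym theorem for CP maps together with the irreducibility of $\pi$ to conclude that the original decomposition is necessarily trivial.

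In more detail, suppose $\psi:\cS\to B(\cK)$ is a CP map with $\rho:=\left.\pi\right|_{\cS}-\psi$ also CP. By the Arveson extension theorem, extend $\psi$ and $\rho$ to CP maps $\tilde{\psi},\tilde{\rho}:\cA\to B(\cK)$. Their sum is a CP extension of $\left.\pi\right|_{\cS}$ to $\cA$ satisfying
\[
    (\tilde{\psi}+\tilde{\rho})(1_{\cA})=\psi(1_{\cS})+\rho(1_{\cS})=\pi(1_{\cA})=I,
\]
so it is UCP. The UEP of $\left.\pi\right|_{\cS}$ then forces $\tilde{\psi}+\tilde{\rho}=\pi$. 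In particular $\tilde{\psi}$ is a CP map on $\cA$ dominated (in the CP order) by the $*$-representation $\pi$.

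At this point I would invoke Arveson's Radon--Nikodym theorem: for a CP map dominated by a $*$-representation $\pi$, there exists $T\in\pi(\cA)'$ with $0\leq T\leq I$ such that $\tilde{\psi}(a)=T\pi(a)$ for all $a\in\cA$. Since $\pi$ is irreducible, $\pi(\cA)'=\C I$, so $T=tI$ for some $t\in[0,1]$, whence $\tilde{\psi}=t\pi$. Restricting back to $\cS$ gives $\psi=t\left.\pi\right|_{\cS}$, which is precisely the purity condition for $\left.\pi\right|_{\cS}$.

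The main ingredient to invoke is the specialisation of Arveson's Radon--Nikodym theorem to the case where the dominating CP map is already a $*$-representation (so its Stinespring dilation is the identity and the derivative lies directly in the commutant); everything else is formal manipulation. The only delicate point is checking that the sum of the two Arveson extensions is unital, so that the UEP actually applies and pins the lift to $\pi$; this is automatic from the unitality computation above. Note that this argument is a close cousin of Theorem \ref{thm:absoluteIsBoundary}; here we do not need the absolute extreme machinery, since purity is a statement about CP domination on $\cS$ alone.
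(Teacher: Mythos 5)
Your argument is correct. Note that the paper does not prove this proposition at all---it is quoted verbatim from Farenick--Tessier \cite[Proposition 2.12]{farenick2022purity}---so there is no in-paper proof to compare against; your route (Arveson extension of both summands, the unique extension property to pin the lifted sum to $\pi$, then the Radon--Nikodym theorem plus $\pi(\cA)'=\C I$ to get $\tilde{\psi}=t\pi$) is the standard argument and all the steps check out, including the unitality of $\tilde{\psi}+\tilde{\rho}$ and the fact that the minimal Stinespring dilation of the unital representation $\pi$ is $\pi$ itself.
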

The next step is to show that restrictions of boundary representations are maximal. This observation was made in the context of operator algebras by Dritschel and McCullough in \cite[Theorem 1.1]{dritschel2005boundary} (who followed Muhly and Solel, see \cite[Theorem 1.2]{muhly1998algebraic}). The following reformulation in the context of operator systems is due to Arveson.
\begin{prop}[Proposition 2.4, \cite{arveson2008noncommutative}]
\label{prop:UEPiffMaximal}
Let $\cS$ be an operator system and let $\cA=C^*(\cS)$. Then, a map $\phi:\cS\to B(\cH)$ is maximal if and only if it has the unique extension property.
\end{prop}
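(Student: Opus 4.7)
The plan is to prove the equivalence by passing through Stinespring dilations and exploiting the Kadison--Schwarz inequality $\Psi(a^*a)\geq \Psi(a)^*\Psi(a)$, which is an equality precisely when $\Psi$ behaves multiplicatively. I will use the second, decomposition form of maximality (the paper's remark) rather than the pointwise norm form.

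First I would prove that UEP implies maximal. Assume $\phi$ has a unique UCP extension $\Phi:\cA\to B(\cH)$, which by hypothesis is a $*$-homomorphism. Suppose $\phi(\cdot)=V^*\psi(\cdot)V$ for some UCP $\psi:\cS\to B(\cK)$ and some isometry $V:\cH\to \cK$. By Arveson's extension theorem, extend $\psi$ to a UCP map $\Psi:\cA\to B(\cK)$; then $V^*\Psi(\cdot)V$ is a UCP extension of $\phi$, so by UEP it coincides with $\Phi$. For every $a\in\cA$,
\[
V^*\Psi(a^*a)V \;=\; \Phi(a^*a)\;=\;\Phi(a)^*\Phi(a)\;=\;V^*\Psi(a)^*VV^*\Psi(a)V.
\]
Kadison--Schwarz gives $\Psi(a^*a)\geq \Psi(a)^*\Psi(a)$, while $VV^*\leq \mathds{1}$ gives $\Psi(a)^*\Psi(a)\geq \Psi(a)^*VV^*\Psi(a)$. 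Compressing by $V$ and comparing with the identity above forces both inequalities to become equalities after compression, so $V^*\Psi(a)^*(\mathds{1}-VV^*)\Psi(a)V=0$. Equivalently $(\mathds{1}-VV^*)\Psi(a)Vh=0$ for every $h\in\cH$, which says $\Psi(a)V\cH\subseteq V\cH$ for every $a\in\cA$. Because $\Psi$ is self-adjoint, $V\cH$ is reducing for $\Psi(\cA)$, so $\Psi(\cdot)=V\Phi(\cdot)V^*\oplus\rho(\cdot)$ on $\cA$ and a fortiori $\psi=V\phi V^*\oplus \rho$ on $\cS$. This is the definition of maximality.

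Next I would prove the converse, that maximality implies UEP. Let $\Phi:\cA\to B(\cH)$ be any UCP extension of $\phi$; by Stinespring and unitality, $\Phi(a)=W^*\pi(a)W$ for a $*$-representation $\pi:\cA\to B(\cH')$ and an isometry $W:\cH\to\cH'$. Restriction to $\cS$ gives $\phi(\cdot)=W^*(\pi|_{\cS})(\cdot)W$, so by maximality $\pi(s)W\cH\subseteq W\cH$ for every $s\in\cS$. Since $\cS$ is self-adjoint and generates $\cA$ as a $C^*$-algebra, and $\pi$ is multiplicative and $*$-preserving, $W\cH$ is reducing for all of $\pi(\cA)$. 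Writing $P=WW^*$ and using that $P$ commutes with each $\pi(a)$ while $W^*P=W^*$, a direct computation gives $\Phi(ab)=W^*\pi(a)\pi(b)W=W^*\pi(a)P\pi(b)W=W^*P\pi(a)\pi(b)W=\Phi(a)\Phi(b)$, so $\Phi$ itself is a $*$-homomorphism. Uniqueness of the extension is then automatic: any two $*$-homomorphisms from $\cA$ into $B(\cH)$ that agree on the generating self-adjoint set $\cS$ agree on the $*$-algebra it generates, hence everywhere by continuity.

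The main obstacle is the rigidity step in the first direction: extracting the invariance $\Psi(a)V\cH\subseteq V\cH$ from the equality $V^*\Psi(a^*a)V=V^*\Psi(a)^*V\cdot V^*\Psi(a)V$ via Kadison--Schwarz. Once that invariance is established, the decomposition into a direct sum is immediate, and the reverse direction only needs Stinespring together with the observation that a $*$-representation is determined by its restriction to any generating set.
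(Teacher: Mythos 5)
The paper offers no proof of this proposition: it is quoted verbatim from Arveson's work (Proposition 2.4 of the cited reference), so there is no internal argument to compare yours against. Your proof is correct, and it is essentially the standard argument from that literature: for UEP $\Rightarrow$ maximal, the sandwich $V^*\Psi(a^*a)V\geq V^*\Psi(a)^*\Psi(a)V\geq V^*\Psi(a)^*VV^*\Psi(a)V$ collapsing to equalities, hence $(\mathds{1}-VV^*)\Psi(a)V=0$, is exactly the Kadison--Schwarz/multiplicative-domain rigidity step, and the converse via Stinespring together with invariance of $W\cH$ under the $*$-algebra generated by $\cS$ (giving $\Phi(ab)=W^*\pi(a)WW^*\pi(b)W=\Phi(a)\Phi(b)$, and then uniqueness because every UCP extension is a $*$-homomorphism determined on the generators) is likewise the standard route. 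The one dependency worth making explicit is that you work throughout with the direct-sum reformulation of maximality, $\psi=V\phi V^*\oplus\rho$, rather than with the pointwise norm condition $\norm{\phi(s)h}=\norm{\psi(s)Vh}$ that the paper takes as the definition; the paper asserts the equivalence of the two forms only in an unproved remark, so your argument is complete modulo that (easy) equivalence, which you should either prove or cite explicitly if this were to stand alone.
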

We can now proceed to the proof of the main theorem of this section.
\begin{proof}[Proof of Theorem \ref{thm:absoluteIsBoundary}]
Assume that $\phi$ extends to a boundary representation of $\cA$, and assume that
\[
	\phi=\sum\limits_i\gamma_i^*\phi_i\gamma_i
\]
is a matrix convex combination with $\gamma_i$ non-zero for all $i$. For each $i$, we get that $\phi-\gamma_i^*\phi_i\gamma_i$ is completely positive.  By Proposition \ref{prop:boundaryRepsArePure}, $\phi$ is pure, which means that there exists some $t\in(0,1]$ such that $\gamma_i^*\phi_i\gamma_i=t\phi$, from which it holds that
\[
	\qty(t^{-\frac{1}{2}}\gamma_i)^*\phi_i\qty(t^{-\frac{1}{2}}\gamma_i)=\phi.
\]
$\phi$ and $\phi_i$ both being UCP, we get that $\qty(t^{-\frac{1}{2}}\gamma_i)^*\qty(t^{-\frac{1}{2}}\gamma_i)=\mathds{1}_n$, so that whenever $n_i\leq n$, we get that $n_i=n$ and that this matrix is unitary. When $n_i>n$, we get that $\phi_i$ is a dilation of $\phi$, and since $\phi$ has the UEP (being a boundary representation), Proposition \ref{prop:UEPiffMaximal} implies that $\phi_i$ is unitarily equivalent to $\phi\oplus\rho_i$ for some $\rho_i\in\UCP{\cS}{\mat{n_i-n}}$. Therefore, $\phi$ is an absolute matrix extreme point.\par
As for the converse, note that if $\phi$ is an absolute matrix extreme point, it is pure by Theorem \ref{thm:matExtremeArePure}.  By \cite[Lemma 2.1]{davidson2015choquet}, we can finish the proof by showing that $\phi$ is maximal. Assuming it is not maximal, we apply \cite[Lemma 2.3]{davidson2015choquet}, and find some pure UCP map $\psi\in\UCP{\cS}{\mat{n+1}}$ such that $\phi=V^*\psi V$.  Since that is a matrix convex combination and $\phi$ is an absolute matrix extreme point, we get that $\psi=U^*(\phi\oplus\rho)U$ for some unitary $U$ and a state $\rho\in\UCP{\cS}{\mathbb{C}}$. But now, defining:
\[
\gamma_1=\left(\begin{array}{ccccc}1 & 0  & \dots & 0 & 0 \\0 & 1 & \dots & 0 & 0 \\ \vdots &\ddots &\ddots &\vdots & \vdots \\ 0 & \dots & 0 & 1 & 0 \end{array}\right)\in\mat{n}{n+1},\quad \gamma_2=\qty(0,\dots,0,1)\in\mat{1}{n+1},
\]
we get that:
\[
	\psi=U^*\qty(\gamma_1^*\phi\gamma_1+\gamma_2^*\rho\gamma_2)U=\qty(\gamma_1U)^*\phi\qty(\gamma_1U)+\qty(\gamma_2U)^*\rho\qty(\gamma_2U)
\]
is a non-trivial proper matrix convex combination, which shows $\psi$ is not matrix extreme (and thus not pure, by Theorem \ref{thm:matExtremeArePure}). This proves that $\phi$ is maximal. Finally, it follows that $\phi$ is pure and maximal and thus extends to a boundary representation for $\cS$.
\end{proof}
It follows from Theorem \ref{thm:absoluteIsBoundary} that the restrictions to $\cS$ of finite dimensional boundary representations are precisely the absolute matrix extreme points of $\UCP{\cS}$. We would also like to find out whether $\UCP{\cS}$ contains information about infinite dimensional boundary representations, and to characterize matrix extreme points which are not absolute. The first part was answered by Davidson and Kennedy:
\begin{thm}[Theorem 2.4, \cite{davidson2015choquet}]
\label{thm:pureDilatesToBoundary}
Let $\cS$ be an operator system and let $\phi:\cS\to B(\cH)$ be a pure UCP map. Then, $\phi$ dilates to a boundary representation for $C^*(\cS)$.
\end{thm}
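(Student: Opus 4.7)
The plan is to dilate $\phi$ to a pure UCP map which is also \emph{maximal}, and then to observe that such a map has the unique extension property and extends uniquely to an irreducible $*$-representation of $C^*(\cS)$---by definition, a boundary representation.

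To produce such a maximal pure dilation, I would consider the collection of pure UCP dilations $\psi:\cS\to B(\cK)$ of $\phi$, with $\dim\cK$ bounded by a fixed cardinal so as to remain within a set, partially ordered by the dilation relation: $\psi_1\preceq\psi_2$ when $\psi_2$ is a dilation of $\psi_1$. Chains admit upper bounds via an inductive-limit construction on the underlying Hilbert spaces, and purity passes to the limit: if $\rho$ is CP and dominated by the limit $\psi$, then compressing by the embedding isometries $V_\alpha:\cK_\alpha\hookrightarrow\cK$ gives $V_\alpha^*\rho V_\alpha\leq\psi_\alpha$, so purity of $\psi_\alpha$ yields $V_\alpha^*\rho V_\alpha=t_\alpha\psi_\alpha$, the $t_\alpha$ are forced to be equal, and a strong-operator density argument upgrades this to $\rho=t\psi$. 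Zorn's lemma then yields a maximal element $\psi:\cS\to B(\cK)$ in this poset.

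The technical heart of the argument, and the main obstacle, is to establish the following auxiliary lemma: \emph{any pure UCP map on $\cS$ which fails to be a maximal UCP map admits a proper pure UCP dilation.} This is delicate because naive Stinespring-type constructions typically destroy purity. The strategy is to choose a witness $(s_0,h_0)\in\cS\times\cH$ of non-maximality, consider a compact convex set of UCP maps into $B(\cK')$ (for sufficiently large $\cK'$) that dilate the given map along some isometry $V:\cH\to\cK'$ and satisfy the strict inequality $\|\rho(s_0)Vh_0\|>\|\psi(s_0)h_0\|$, and extract an extreme point; a face argument using the purity of the original map shows that such an extreme point is itself pure. Granted this lemma, the Zorn-maximal element $\psi$ must itself be a maximal UCP map, since otherwise the lemma would yield a strictly larger pure dilation, contradicting Zorn-maximality.

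Once $\psi$ is both pure and maximal, Proposition \ref{prop:UEPiffMaximal} gives that $\psi$ has the unique extension property, so it extends uniquely to a $*$-homomorphism $\pi:C^*(\cS)\to B(\cK)$. To finish I would verify that $\pi$ is irreducible: if $P$ were a nontrivial projection in the commutant of $\pi(C^*(\cS))$, then the map $s\mapsto P\psi(s)P$ would be CP and dominated by $\psi$, so purity would force it to equal $t\psi$ for some $t\in[0,1]$; evaluating at the unit gives $P=tI$, contradicting nontriviality of $P$. Therefore $\pi$ is an irreducible $*$-representation whose restriction to $\cS$ has the unique extension property, i.e., a boundary representation for $\cS$ dilating $\phi$.
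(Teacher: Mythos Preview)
The paper does not supply its own proof of this statement: it is quoted as \cite[Theorem~2.4]{davidson2015choquet} and used as a black box. Your outline is in fact the Davidson--Kennedy argument. The auxiliary lemma you single out as the ``technical heart''---that a pure UCP map which is not maximal admits a proper \emph{pure} UCP dilation---is exactly their Lemma~2.3 (which the present paper also invokes, e.g.\ in the proof of Theorem~\ref{thm:absoluteIsBoundary}), and your sketch of its proof via an extreme-point/face argument is the right idea. The Zorn's-lemma scheme over pure dilations, the passage from ``pure and maximal'' to the unique extension property via Proposition~\ref{prop:UEPiffMaximal}, and the final commutant-projection argument for irreducibility are all correct and match the original.
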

This means that any matrix extreme point which is not absolute (namely, it is not already a restriction of a boundary representation) is of the form $V^*\left.\pi\right|_{\cS}V$ for some boundary representation $\pi$. In Section 5 we will see an example of an operator system for which $\UCP{\cS}$ has a matrix extreme point of this form, with $\pi$ being infinite dimensional.\par
As for the second part, we saw that matrix extreme points which are not absolute, dilate to another matrix extreme point which lies in the next level of $\UCP{\cS}$. This gives a dilation-theoretic characterization of these points, namely, matrix extreme points are either restrictions of boundary representations or they dilate non trivially to another matrix extreme point at the next level. In the last case, they are also compressions of restrictions of boundary representations.\par 
The above were stated for matrix state spaces, but through \ref{prop:WebWink35}, we have the following corollary for a general compact matrix convex set:
\begin{cor}
\label{cor:MatExtChar}
Let $K=\qty(K_n)_{n=1}^{\infty}$ be a compact matrix convex set over a locally convex vector space $V$. Then $v\in\partial K_n$ is a matrix extreme point which is not absolute if and only if it is of the form $\gamma^*\tilde{v}\gamma$ for some $\tilde{v}\in\partial K_{n+1}$.
\end{cor}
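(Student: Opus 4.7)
The plan is to transport the statement into the matrix state space setting via Proposition \ref{prop:WebWink35}. Since $K$ is matrix affinely homeomorphic to $\UCP{A(K)}$, and both notions---matrix extreme and absolute matrix extreme---are preserved under matrix affine homeomorphisms (being defined purely in terms of matrix convex combinations), it suffices to prove the corollary for $K=\UCP{\cS}$ with $\cS=A(K)$.

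For the ``only if'' direction, I would argue almost exactly as in the converse part of the proof of Theorem \ref{thm:absoluteIsBoundary}. Assume $v\in\UCP{\cS}{\mat{n}}$ is matrix extreme but not absolute. Theorem \ref{thm:matExtremeArePure} renders $v$ pure, while Theorem \ref{thm:absoluteIsBoundary} says $v$ does not extend to a boundary representation. Because pure together with maximal forces extension to a boundary representation (Lemma~2.1 of \cite{davidson2015choquet}, as used inside the proof of Theorem \ref{thm:absoluteIsBoundary}), $v$ must fail to be maximal. Applying Lemma~2.3 of \cite{davidson2015choquet} then yields a pure UCP dilation $\tilde v\in\UCP{\cS}{\mat{n+1}}$ together with an isometry $\gamma\in\mat{n+1}{n}$ such that $v=\gamma^*\tilde v\gamma$; pureness of $\tilde v$ upgrades to $\tilde v\in\partial K_{n+1}$ by another application of Theorem \ref{thm:matExtremeArePure}.

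For the ``if'' direction, suppose $v\in\partial K_n$ satisfies $v=\gamma^*\tilde v\gamma$ for some $\tilde v\in\partial K_{n+1}$ and $\gamma\in\mat{n+1}{n}$ with $\gamma^*\gamma=I_n$, and assume toward a contradiction that $v$ is absolute. The $n_i>n$ clause in the definition of absolute matrix extreme point furnishes $w\in K_1$ and a unitary $u\in\mat{n+1}$ with $\tilde v=u^*(v\oplus w)u$. Decomposing $v\oplus w=\tilde p^*v\tilde p+\tilde q^*w\tilde q$ for $\tilde p=(I_n\ 0)\in\mat{n}{n+1}$ and $\tilde q=(0\ 1)\in\mat{1}{n+1}$, and absorbing $u$ into the coefficients, realises $\tilde v$ as a proper matrix convex combination of $v$ (at level $n$) and $w$ (at level $1$); properness follows from the unitarity of $u$, which guarantees surjectivity of both $\tilde p u$ and $\tilde q u$. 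Matrix extremeness of $\tilde v$ would then force every piece to sit at level $n+1$, contradicting $n<n+1$.

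I do not anticipate any serious obstacle. The forward direction is essentially a rebadging of what is already done in the proof of Theorem \ref{thm:absoluteIsBoundary}, with the one subtlety being that Lemma~2.3 of \cite{davidson2015choquet} yields a dilation at exactly the next level $n+1$ rather than at some higher level (this is precisely how that lemma is invoked in the existing proof). The reverse direction is pure bookkeeping once the $n_i>n$ clause of the definition of absolute matrix extreme point is unpacked.
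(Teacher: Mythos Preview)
Your proposal is correct and follows the same route as the paper: transport to the matrix state space via Proposition \ref{prop:WebWink35}, then invoke exactly the combination of Theorem \ref{thm:matExtremeArePure}, Theorem \ref{thm:absoluteIsBoundary}, and Lemmas~2.1 and~2.3 of \cite{davidson2015choquet} that the surrounding discussion already uses. The paper in fact states the corollary without a separate proof, treating it as an immediate consequence of that discussion; your write-up simply makes the two directions explicit, and your ``if'' argument is the same decomposition used in the converse half of the proof of Theorem \ref{thm:absoluteIsBoundary}.
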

\section{Subhomogeneous matrix convex sets}
In this section we introduce the notion of Subhomogeneous matrix convex sets. Recall that a unital $C^*$-algebra $\cA$ is called Subhomogeneous, if there exists some $N\in\mathbb{N}$ such that every irreducible representation of $\cA$ is of dimension less than or equal to $N$. We say that $\cA$ is $N$-Subhomogeneous, if $N$ is the smallest integer satisfying this condition.\par 
\begin{prop}
\label{prop:Nsubhomogeneity}
Let $\cA$ be a unital $C^*$-algebra. The following are equivalent:
\begin{itemize}
\item $\cA$ is $N$-Subhomogeneous.
\item $\partial\UCP{\cA}\subset\bigcup_{n=1}^N\UCP{\cA}{\mat{n}}$.
\end{itemize}
\end{prop}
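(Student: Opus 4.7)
The plan is to pass through Theorem \ref{thm:matExtremeArePure} in order to translate matrix extremality into purity, and then to exploit the minimal Stinespring dilation to trade pure UCP maps $\cA\to\mat{n}$ for irreducible representations of $\cA$. The key link is that if $\phi=V^*\pi(\cdot)V$ is a minimal Stinespring dilation with $V\colon\C^n\to\cH_\pi$ an isometry, then $\phi$ is pure if and only if $\pi$ is irreducible, and moreover $n\leq\dim\cH_\pi$. The whole proposition is essentially the bookkeeping of this inequality against the subhomogeneity bound.

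For the forward direction, assume $\cA$ is $N$-subhomogeneous and fix $\phi\in\partial\UCP{\cA}{\mat{n}}$. By Theorem \ref{thm:matExtremeArePure}, $\phi$ is pure, so its minimal Stinespring dilation $\phi(\cdot)=V^*\pi(\cdot)V$ has $\pi\colon\cA\to B(\cH_\pi)$ irreducible and $V\colon\C^n\to\cH_\pi$ an isometry. Subhomogeneity gives $\dim\cH_\pi\leq N$, and $V$ being isometric forces $n\leq\dim\cH_\pi\leq N$, i.e.\ $\phi\in\UCP{\cA}{\mat{n}}$ with $n\leq N$.

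For the converse, assume $\partial\UCP{\cA}\subset\bigcup_{n=1}^N\UCP{\cA}{\mat{n}}$ and let $\pi\colon\cA\to B(\cH)$ be any irreducible representation. If $\dim\cH>N$, pick $n$ with $N<n\leq\dim\cH$ and an isometry $V\colon\C^n\to\cH$. Setting $\phi(\cdot)=V^*\pi(\cdot)V\in\UCP{\cA}{\mat{n}}$, I want to show $\phi$ is pure and hence matrix extreme. The pair $(\pi,V)$ is a Stinespring dilation of $\phi$, and it is \emph{minimal}: by Kadison's transitivity theorem, $\pi(\cA)Ve_i$ is dense in $\cH$ for each basis vector $e_i$, so $\pi(\cA)V\C^n$ spans a dense subspace of $\cH$. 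Combined with the irreducibility of $\pi$, the standard characterization of pure CP maps yields that $\phi$ is pure. Theorem \ref{thm:matExtremeArePure} then places $\phi$ in $\partial\UCP{\cA}{\mat{n}}$ at level $n>N$, contradicting the hypothesis. Hence every irreducible representation of $\cA$ has dimension at most $N$.

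The step I expect to be most delicate is the minimality check in the converse, since it is the one place where purely algebraic manipulations are not enough and one must invoke transitivity to guarantee that compressing an irreducible representation by a finite-rank isometry does not destroy purity. Everything else is a direct translation between the operator-algebraic language of irreducible representations and the convex-geometric language of pure UCP maps via Stinespring.
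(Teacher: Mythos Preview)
Your proof is correct and follows essentially the same path as the paper's: both directions hinge on the equivalence between pure UCP maps $\cA\to\mat{n}$ and compressions of irreducible representations, with the paper citing \cite[Corollary~1.4.3]{Arveson69} as a black box for this fact while you unpack it via the minimal Stinespring dilation and Theorem~\ref{thm:matExtremeArePure}. Your appeal to Kadison transitivity for minimality is correct but heavier than necessary, since $\overline{\pi(\cA)V\C^n}$ is a nonzero $\pi$-invariant subspace and hence equals $\cH$ by irreducibility alone.
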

\begin{proof}
Assume that $\cA$ is $N$-Subhomogeneous and let $\varphi\in\partial\UCP{\cA}{\mat{m}}$ for some $m$. Our goal is to show that $m\leq N$. Using \cite[Corollary 1.4.3]{Arveson69}, we know that $\varphi(x)=V^*\pi(x)V$ for some irreducible representation $\pi$ on some Hilbert space $\cK$, and $V:\cH\to \cK$ is an isometry. Since $\pi$ is an irreducible representation, it must be finite dimensional with dimension less then or equal to $N$. But then, we have that $\varphi$ is a compression of $\pi$, which means that $m\leq N$ and we are done.\par 
For the converse, assume that $\partial\UCP{\cA}\subset\bigcup_{n=1}^N\UCP{\cA}{\mat{n}}$, and let $\pi$ be an irreducible representation of $\cA$. If $\pi$ is finite dimensional, it is a matrix extreme point, which means it has to lie in some $\UCP{\cA}{\mat{n}}$ for $n\leq N$, and we are done. Otherwise, assume that $\pi:\cA\to B(\cH)$ with $\cH$ being infinite dimensional. Choose some $N+1$ dimensional subspace $\mathcal{M}\subset\mathcal{H}$, and define $V:\mathcal{M}\to\mathcal{H}$ by $Vm=m$. Then, we have that $\varphi(x)=V^*\pi(x)V$ is a matrix extreme point by \cite[Corollary 1.4.3]{Arveson69} which lies in $\UCP{\cA}{\mat{N+1}}$ which is a contradiction. This shows that all irreducible representations are finite dimensional and of dimension less then or equal to $N$, so $ \cA$ is $N$-Subhomogeneous.
\end{proof}
\begin{defn}
Let $K=\qty(K_n)_{n=1}^{\infty}$ be a compact matrix convex set over a locally convex vector space $V$. We say that $K$ is \emph{Subhomogeneous} if there exists some $N\in\mathbb{N}$ such that $\partial K_n=\emptyset$ for all $n>N$. If $N$ is the smallest integer satisfying these conditions, we say that $K$ is \emph{$N$-Subhomogeneous}.
\end{defn}
Proposition \ref{prop:Nsubhomogeneity} shows that if $K=\UCP{\cA}$ for some unital $C^*$-algebra $\cA$, $K$ is $N$-Subhomogeneous if and only if $\cA$ is Subhomogeneous.\par 
We use this relation in order to extend the notion of subhomogeneity to operator systems.
\begin{defn}
Let $\cS$ be an operator system. We say that $\cS$ is \emph{Subhomogeneous} if $\UCP{\cS}$ is Subhomogeneous, and that it is $N$-\emph{Subhomogeneous} if $\UCP{\cS}$ is $N$-Subhomogeneous.
\end{defn}
By Theorem \ref{thm:absoluteIsBoundary}, we see that $\cS$ is $N$-Subhomogeneous if and only if the finite dimensional boundary representatios for $\cS$ are of dimension less than or equal to $N$. We do not know whether a Subhomogeneous operator system may admit an infinite dimensional boundary representation. Note, however, that regardless of the answer to that question, subhomogeneous operator systems are completely normed by their finite dimensional boundary representations.\par 
In the rest of the section we show that Subhomogeneous compact matrix convex sets can be recovered from a finite number of levels.
\begin{prop}
\label{prop:NKreinMilman}
Let $K=\qty(K_n)_{n=1}^{\infty}$ be an $N$-Subhomogeneous compact matrix convex set over some locally convex vector space $V$. Then
\[
		K=\overline{{\rm co}}\qty(K_N).
\]
\end{prop}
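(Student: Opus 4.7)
The plan is to apply the matrix Krein--Milman theorem of Webster and Winkler and then use $N$-subhomogeneity to realize every matrix extreme point (which lives in some level $n \leq N$) as a one-term matrix convex combination drawn from $K_N$. By \cite[Theorem 4.3]{WebWink}, $K$ is the closed matrix convex hull of $\partial K = (\partial K_n)_{n=1}^\infty$. Since $K$ is $N$-subhomogeneous, $\partial K_n = \emptyset$ for every $n > N$, so all matrix extreme points live in levels $1,\dots,N$. It therefore suffices to show that $K_n \subset \operatorname{co}(K_N)$ for every $n \leq N$, because then substituting such representations into any matrix convex combination of extreme points produces a matrix convex combination of elements of $K_N$ approximating the original element.

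For the embedding step, fix $n \leq N$ and $v \in K_n$. If $n = N$ there is nothing to do. Otherwise, pick any $w \in K_{N-n}$; this is possible because $K$ is non-empty (by minimality of $N$, $K_N$ contains a matrix extreme point, and then one-term compressions of this point through isometries $\mathbb{M}_{N,k} \ni \gamma$ with $\gamma^*\gamma = I_k$ produce elements at every level $k \leq N$). Matrix convex sets are closed under direct sums: writing $\alpha = (I_n\ 0) \in \mathbb{M}_{n,N}$ and $\beta = (0\ I_{N-n}) \in \mathbb{M}_{N-n,N}$, one has $\alpha^*\alpha + \beta^*\beta = \mathds{1}_N$, so $\tilde v := v \oplus w = \alpha^* v \alpha + \beta^* w \beta$ lies in $K_N$. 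Taking $\gamma = \left(\begin{smallmatrix} I_n \\ 0 \end{smallmatrix}\right) \in \mathbb{M}_{N,n}$ gives $\gamma^*\gamma = \mathds{1}_n$ and $\gamma^* \tilde v \gamma = v$, displaying $v$ as a (one-term) matrix convex combination from $K_N$.

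Combining the two steps, any $v \in K_m$ is a limit of matrix convex combinations $\sum_i \delta_i^* v_i \delta_i$ with $v_i \in \partial K_{n_i}$ and $n_i \leq N$; replacing each $v_i$ by $\gamma_i^* \tilde v_i \gamma_i$ with $\tilde v_i \in K_N$ and resumming yields a matrix convex combination $\sum_i (\gamma_i \delta_i)^* \tilde v_i (\gamma_i \delta_i)$ of elements of $K_N$ (the channel condition is preserved since $\gamma_i^*\gamma_i = \mathds{1}_{n_i}$), proving $K \subset \overline{\operatorname{co}}(K_N)$. The reverse inclusion is immediate from $K_N \subset K$ together with compactness and matrix convexity of $K$. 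The only genuine point to verify is non-emptiness of $K_{N-n}$ for $n < N$, which I anticipate being the sole place where one must invoke minimality of $N$; everything else is a routine bookkeeping exercise with matrix convex combinations.
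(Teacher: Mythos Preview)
Your proof is correct and follows essentially the same route as the paper: apply the Webster--Winkler Krein--Milman theorem, then use direct sums with an element of $K_{N-n}$ to exhibit every matrix extreme point at level $n\le N$ as a compression of a point in $K_N$. The only difference is cosmetic---you spell out the substitution into a general matrix convex combination and explicitly verify that $K_{N-n}$ is nonempty (via compressing a point of $K_N$), whereas the paper takes this for granted.
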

\begin{proof}
Let $v$ be a matrix extreme point in $\partial K_n$ for some $n<N$. Then for any $u\in K_{N-n}$ we have that $v\oplus u\in K_N$ by matrix convexity. Therefore, any matrix convex combination of matrix extreme points, can be written as a matrix convex combination of points which all lie in $K_N$. Therefore, by applying the Webster-Winkler Krein-Milman theorem (\cite[Theorem 4.3]{WebWink}):
\[
	K=\overline{{\rm co}}\qty(\partial K)=\overline{{\rm co}}\qty(K_N)
\]
\end{proof}
In particular, we get that if $K=\qty(K_n)_{n=1}^{\infty}$ is the smallest compact matrix convex set for which the $N$-th level is $K_N$. For the special case of matrix convex sets over $\mathbb{C}^d$ for some $d\in\mathbb{N}$, we can improve this result.
For a compact matrix convex set $K=\qty(K_n)_{n=1}^{\infty}$ in $\mathbb{C}^d$ and some $N\in\mathbb{N}$, the set $\cW^{N\mbox{-min}}\qty(K)$ is defined as the smallest matrix convex set which has $K_N$ at level $N$. We can also describe those sets in terms of their matrix extreme points.
\begin{prop}
\label{prop:NgeneratedisMinimal}
Let $K=\qty(K_n)_{n=1}^{\infty}$ be a compact matrix convex set in $\mathbb{C}^d$. Then, $K=\cW^{N\mbox{-min}}\qty(K)$ if and only if it is $n$-Subhomogeneous for some $n\leq N$.
\end{prop}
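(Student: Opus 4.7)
The approach is to identify $\cW^{N\mbox{-min}}(K)$ with the closed matrix convex hull $\overline{{\rm co}}(K_N)$ of $K_N$ regarded as a one-level seed, and then to run one direction through Proposition \ref{prop:NKreinMilman} and the other through a range-factorization of a matrix convex combination into a proper one. Since $K$ is matrix convex, the $N$-th level $K_N$ is itself closed under level-$N$ matrix convex combinations, and consequently $\overline{{\rm co}}(K_N)$ is the smallest matrix convex set having $K_N$ at level $N$.

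For the direction assuming $K$ is $n$-Subhomogeneous with some $n\leq N$, one gets $\partial K_m=\emptyset$ for every $m>N$, so Proposition \ref{prop:NKreinMilman} immediately gives
\[
K=\overline{{\rm co}}(K_N)=\cW^{N\mbox{-min}}(K).
\]

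For the converse, assume $K=\cW^{N\mbox{-min}}(K)=\overline{{\rm co}}(K_N)$, fix $m>N$ and $v\in K_m$; the aim is to show $v\notin\partial K_m$. Working in the finite dimensional ambient space $\mat{m}(\C^d)$, a Carath\'eodory-type bound on the number of summands guarantees that the matrix convex hull of the compact seed $K_N$ is already closed, yielding a finite expansion $v=\sum_i\gamma_i^*v_i\gamma_i$ with $v_i\in K_N$, nonzero $\gamma_i\in\mat{N}{m}$, and $\sum_i\gamma_i^*\gamma_i=\mathds{1}_m$. Now factor each $\gamma_i=\iota_i\tilde{\gamma}_i$ through its range, where $\iota_i\colon\C^{k_i}\hookrightarrow\C^N$ is an isometric inclusion with $k_i:={\rm rank}(\gamma_i)\leq N$ and $\tilde{\gamma}_i\in\mat{k_i}{m}$ is surjective. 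The compression $v_i':=\iota_i^*v_i\iota_i$ lies in $K_{k_i}$ by matrix convexity (it is a single-term matrix convex combination with $\iota_i^*\iota_i=\mathds{1}_{k_i}$), and a short computation yields
\[
v=\sum_i\tilde{\gamma}_i^*v_i'\tilde{\gamma}_i,\qquad\sum_i\tilde{\gamma}_i^*\tilde{\gamma}_i=\mathds{1}_m,
\]
which is a \emph{proper} matrix convex combination with source levels $k_i\leq N<m$. Matrix extremity of $v$ would force $k_i=m$ for every $i$, contradicting $k_i\leq N$; hence $\partial K_m=\emptyset$ for all $m>N$, so $K$ is $n$-Subhomogeneous for some $n\leq N$.

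The main obstacle I expect is converting membership in the \emph{closed} matrix convex hull into a genuine finite matrix convex combination, since the range-factorization reduction only makes sense for a finite sum. In the finite dimensional ambient $\C^d$ this is the Carath\'eodory-type bound on the number of summands needed to represent a point in the matrix convex hull of a compact set, and I would invoke it as a known fact rather than reprove it; it is exactly where the hypothesis that $K$ is a matrix convex set over $\C^d$ (rather than a general locally convex space) enters.
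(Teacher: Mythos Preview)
Your proof is correct and follows the same approach as the paper's: Proposition \ref{prop:NKreinMilman} for one direction, and for the other the observation that every element at level $m>N$ is a nontrivial proper matrix convex combination from levels $\leq N$, with the Carath\'eodory-type closure result (cited in the paper as \cite[Corollary 2.5]{hartz2021dilation}) playing the same role in both arguments. Your range-factorization $\gamma_i=\iota_i\tilde{\gamma}_i$ makes explicit the step the paper glosses over when it simply asserts that the resulting combination is \emph{proper}.
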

\begin{proof}
Assume first that $K=\cW^{N\mbox{-min}}\qty(K)={\rm co}\qty(K_N)$. In that case, any point $v\in K_m$ for $m>N$ is a proper matrix convex combination of points from level $N$, which means it is not matrix extreme. Therefore, the highest level which contains a matrix extreme point is $N$ or lower, which proves the claim.\par 
For the converse, note that by \cite[Corollary 2.5]{hartz2021dilation} and Proposition \ref{prop:NKreinMilman}:
\[
	K=\overline{{\rm co}}\qty(K_n)={\rm co}\qty(K_n)\subset{\rm co}\qty(K_N),
\]
and by matrix convexity, we get equality. We finish the proof by noting that ${\rm co}\qty(K_N)$ is (by definition) the minimal matrix convex set which has $K_N$ at level $N$.
\end{proof}
\section{Subhomogeneous operator systems}
According to Proposition \ref{prop:WebWink35}, $\cS$ is completely order isomorphic to $A(\UCP{\cS})$, via the map $\delta_{\cS}:\cS\to A(\UCP{\cS})$ such that $\delta_{\cS}(s)=((\delta_{\cS}(s))_n)_{n\in\mathbb{N}}$ is given by:
\[
	\qty(\delta_{\cS}(s))_n\qty(\phi)=\phi(s)
\]
for all $n\in\mathbb{N}$ and $\phi\in\UCP{\cS}{\mat{n}}$. In this section we show that this map gives us a way to lift an equivalence of two matrix state spaces to an equivalence of the associated operator systems. 
\begin{prop}
\label{prop:mataffhomImpliesCOI}
Let $\cS$ and $\cR$ be two operator systems, and assume that $T=\qty(T_n)_{n=1}^{\infty}$ is a matrix affine homeomorphism of $\UCP{\cS}$ and $\UCP{\cR}$. Then, $T$ induces a complete order isomorphism of $\cS,\cR$.
\end{prop}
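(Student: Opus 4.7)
The plan is to use $T$ to define a pullback between the abstract operator systems $A(\UCP{\cR})$ and $A(\UCP{\cS})$, verify that this pullback is a complete order isomorphism, and then transport everything through the canonical identifications of Proposition \ref{prop:WebWink35}. Concretely, I would define $T^*:A(\UCP{\cR})\to A(\UCP{\cS})$ by
\[
    (T^*F)_n(\phi) = F_n(T_n(\phi))
\]
for $F=(F_n)\in A(\UCP{\cR})$, $n\in\N$, and $\phi\in\UCP{\cS}{\mat{n}}$.

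First I would verify that $T^*F$ actually lies in $A(\UCP{\cS})$. Matrix affinity is a direct consequence of the matrix affinity of $T$ composed with that of $F$: if $\phi=\sum_i\gamma_i^*\phi_i\gamma_i$ is a matrix convex combination, then $T_n(\phi)=\sum_i\gamma_i^*T_{n_i}(\phi_i)\gamma_i$, and applying $F_n$ transports this sum through to $\sum_i\gamma_i^*(T^*F)_{n_i}(\phi_i)\gamma_i$. Continuity of $(T^*F)_1 = F_1\circ T_1$ is immediate from the continuity of $T_1$ (guaranteed by the homeomorphism hypothesis) and of $F_1$. Unitality of $T^*$ is evident.

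Next I would check that $T^*$ is a complete order isomorphism of $A(\UCP{\cR})$ onto $A(\UCP{\cS})$. Recalling from the discussion following Proposition \ref{prop:WebWink35} that $F\in A(\UCP{\cR})$ is positive precisely when $F_n(\psi)\geq 0$ for every $n\in\N$ and every $\psi\in\UCP{\cR}{\mat{n}}$, positivity of $T^*F$ follows because $T_n(\phi)\in\UCP{\cR}{\mat{n}}$ for every $\phi\in\UCP{\cS}{\mat{n}}$. Promoting this to complete positivity requires identifying $\mat{m}(A(\UCP{\cR}))$ with the $\mat{m}$-valued matrix affine maps on $\UCP{\cR}$ and applying the same pullback level by level; the same computation then gives positivity of $(T^*)_m$ for each $m$. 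Running the identical argument with the continuous matrix affine inverse $T^{-1}=(T_n^{-1})_{n=1}^\infty$ yields a UCP map $(T^{-1})^*$ which is manifestly a two-sided inverse of $T^*$, hence $T^*$ is a complete order isomorphism.

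Finally, I would assemble the conclusion by composing with the canonical maps $\delta_\cS:\cS\to A(\UCP{\cS})$ and $\delta_\cR:\cR\to A(\UCP{\cR})$, which are complete order isomorphisms by Proposition \ref{prop:WebWink35}; the map $\delta_\cS^{-1}\circ T^*\circ\delta_\cR:\cR\to\cS$ (together with its inverse built from $T^{-1}$) is then the desired complete order isomorphism induced by $T$. The only genuine subtlety in this argument is confirming that positivity in $A(K)$ at the matrix levels has the pointwise level-wise description used above; once that matrix cone structure from Webster--Winkler is in hand, the rest of the proof is essentially a formal pullback check.
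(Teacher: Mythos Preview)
Your proposal is correct and follows essentially the same approach as the paper: both define the pullback $F\mapsto (F_n\circ T_n)_n$ from $A(\UCP{\cR})$ to $A(\UCP{\cS})$, verify it is a unital complete order isomorphism via the identification of $\mat{m}(A(K))$ with $\mat{m}$-valued matrix affine maps, and then conjugate by the Webster--Winkler isomorphisms $\delta_{\cS},\delta_{\cR}$ to obtain $\delta_{\cS}^{-1}\circ T^*\circ\delta_{\cR}$. Your version is slightly more explicit in checking that $T^*F$ lands in $A(\UCP{\cS})$, but otherwise the arguments are the same.
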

\begin{proof}
Define a mapping $T_*:A(\UCP{\cR})\to A(\UCP{\cS})$ by $T_*F=\qty(F_n\circ T_n)_{n=1}^{\infty}$. We claim that this is a complete order isomorphism of the two operator systems. In order to do so, we use the identification $M_n\qty(A(\UCP{\cR}))$ with $A(\UCP{\cR},\mat{n})$ and similarly for $\cS$. In that case, one may write $\qty(T_*)_nF$ as the map $\qty(F_m\circ T_m)_{m=1}^{\infty}$ for all $F\in A(\UCP{\cR},\mat{n})$. By definition, $\qty(T_*)_nF$ is positive if and only if $F_m\circ T_m(\phi)$ is positive for all $m\in\mathbb{N}$ and $\phi\in\UCP{\cS}{\mat{m}}$. But because $T$ is a bijection, we get this this is true if and only if $F_m\qty(\psi)$ is positive for all $m\in\mathbb{N}$ and $\psi\in\UCP{\cR}{\mat{m}}$, which is equivalent to $F$ being positive. $T_*$ is obviously unital (the unit does not depend on the argument), meaning the map is indeed UCP. Similar arguments for $T^{-1}$ shows that this map is a complete order isomorphism. Finally, the map $\delta_{\cS}^{-1}\circ T_*\circ \delta_{\cR}$ is a complete order isomorphism of $\cS,\cR$ as a composition of such.
\end{proof}
We are now ready to show that the $N$-order structure (for some large enough $N$) encodes the complete order structure of the operator system.
\begin{thm}
\label{thm:NorderisComplete}
Let $\cS,\cR$ be $N_{\cS}$-Subhomogeneous and $N_{\cR}$-Subhomogeneous operator systems, and set $N=\max\{N_{\cS},N_{\cR}\}$. Then, $\cS$ and $\cR$ are completely order isomorphic if and only if they are $N$-order isomorphic.
\end{thm}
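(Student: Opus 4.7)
The forward direction is immediate since a complete order isomorphism is, in particular, an $N$-order isomorphism. For the converse, suppose $\phi:\cS\to\cR$ is a unital $N$-order isomorphism. The plan is to construct a matrix affine homeomorphism $\phi^*:\UCP{\cR}\to\UCP{\cS}$ by pullback along $\phi$, invoke Proposition~\ref{prop:mataffhomImpliesCOI} to lift this to a complete order isomorphism of $\cS$ and $\cR$, and finally identify the lifted map with $\phi$ itself.

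The main obstacle, and the heart of the argument, is to show that $\phi^*\psi := \psi\circ\phi$ is genuinely UCP for every $\psi\in\UCP{\cR}{\mat{n}}$ and every $n\in\N$. This is subtle precisely because $\phi$ is only assumed to be $N$-positive, so the composition need not be $n$-positive when $n>N$. I will split into two cases. When $n\leq N$, the composition $\psi\circ\phi:\cS\to\mat{n}$ is $n$-positive as the composition of an $n$-positive map with a CP map, and Theorem~\ref{thm:ChoiPos} promotes this to complete positivity since the codomain is $\mat{n}$. When $n>N$, I will exploit the $N$-Subhomogeneity of $\cR$ (which holds because $N_{\cR}\leq N$): by Proposition~\ref{prop:NKreinMilman}, $\psi$ lies in the closed matrix convex hull of $\UCP{\cR}{\mat{N_{\cR}}}$, so $\psi$ can be approximated in the weak* topology by matrix convex combinations $\sum_i\gamma_i^*\psi_i\gamma_i$ with each $\psi_i$ at some level $\leq N$. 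Each $\psi_i\circ\phi$ is UCP by the first case, matrix convex combinations of CP maps into $\mat{n}$ are CP, and pointwise weak*-limits preserve complete positivity into the finite-dimensional codomain $\mat{n}$, so $\psi\circ\phi$ is UCP.

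Running exactly the same argument with $\phi^{-1}$ in place of $\phi$ produces $(\phi^{-1})^*:\UCP{\cS}\to\UCP{\cR}$. Both pullback maps are visibly matrix affine and weak*-continuous, and a one-line check shows they are mutually inverse, so $\phi^*$ is a matrix affine homeomorphism. Proposition~\ref{prop:mataffhomImpliesCOI} then supplies a complete order isomorphism between $\cS$ and $\cR$; the brief computation $\delta_{\cR}(\phi(s))(\psi)=\psi(\phi(s))=\delta_{\cS}(s)(\phi^*\psi)$ verifies that the lifted isomorphism is exactly $\phi$, concluding the proof. In summary, the only nontrivial step is the well-definedness of the pullback at high levels, and it is precisely the subhomogeneity hypothesis, combined with Choi's theorem, that resolves it.
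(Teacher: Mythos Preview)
Your proof is correct and takes a genuinely different route from the paper's. The paper establishes two auxiliary results first: Proposition~\ref{prop:bijectionofMatExt}, showing that the pullback $\bm{\phi}^*$ restricts to a bijection of the matrix extreme points $\partial\UCP{\cS}{\mat{k}}$ and $\partial\UCP{\cR}{\mat{k}}$ for each $k\leq N$, and then Proposition~\ref{prop:NordisMatAffines}, extending this to a matrix affine homeomorphism of the closed matrix convex hulls of those extreme points. Subhomogeneity then identifies these hulls with the full matrix state spaces, and Proposition~\ref{prop:mataffhomImpliesCOI} finishes as in your argument. You bypass the detour through extreme points entirely: for $n\leq N$ you invoke Choi's theorem directly, and for $n>N$ you use Proposition~\ref{prop:NKreinMilman} to write $\psi$ as a weak*-limit of matrix convex combinations of maps at low levels, reducing to the first case. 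Your approach is more streamlined and avoids the machinery of Proposition~\ref{prop:bijectionofMatExt}; on the other hand, the paper's route yields that proposition as a byproduct, and it is invoked independently later in the paper (in the example following Lemma~\ref{lem:numericalRange}) to argue that a $2$-order isomorphism would force the existence of a matrix extreme point at level $2$.
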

Before proving this theorem, we need the next two propositions.
\begin{prop}
\label{prop:bijectionofMatExt}
Let $\cS,\cR$ be an $N$-order isomorphism and let $k\leq N$. Then, the collection $\bm{\phi}^*=\qty(\phi_n^*)_{n\in\mathbb{N}}$ of mappings $\phi_n^*:L\qty(\cR,\mat{n})\to L(\cS,\mat{n})$ given by $\phi_n^*(\psi):=\psi\circ\phi$ restricts to a bijection of $\UCP{\cS}{\mat{k}}$ and $\UCP{\cR}{\mat{k}}$ for all $k\leq N$. In addition, the restriction is a bijection of $\partial\UCP{\cS}{\mat{k}}$ and $\partial\UCP{\cR}{\mat{k}}$ for $k\leq N$.
\end{prop}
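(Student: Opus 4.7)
The plan is to carry out the proof in two stages: first establish the bijection at the level of UCP maps, and then upgrade it to a bijection of matrix extreme points. The crucial tool throughout is Theorem \ref{thm:ChoiPos}, applied to operator systems as noted immediately after its statement, which promotes a unital $k$-positive map into $\mat{k}$ to a completely positive map.

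First I would show that $\phi_k^*$ restricts to a bijection $\UCP{\cR}{\mat{k}}\to\UCP{\cS}{\mat{k}}$ for every $k\leq N$. Given $\psi\in\UCP{\cR}{\mat{k}}$, the map $\psi\circ\phi:\cS\to\mat{k}$ is clearly unital, and it is $k$-positive as the composition of the CP (hence $k$-positive) map $\psi$ with the $N$-positive (hence $k$-positive, since $k\leq N$) map $\phi$. Since the codomain is $\mat{k}$, Choi's theorem lifts this to complete positivity, so $\psi\circ\phi\in\UCP{\cS}{\mat{k}}$. Injectivity is immediate because $\phi$ is a bijection. For surjectivity, given $\tilde\psi\in\UCP{\cS}{\mat{k}}$, the candidate preimage is $\tilde\psi\circ\phi^{-1}$, which by the same argument (using that $\phi^{-1}$ is $N$-positive) lies in $\UCP{\cR}{\mat{k}}$.

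For the second stage, fix $\psi\in\partial\UCP{\cR}{\mat{k}}$ with $k\leq N$ and suppose that
\[
\psi\circ\phi=\sum_{i=1}^{\ell}\gamma_i^{*}\tau_i\gamma_i
\]
is a proper matrix convex combination in $\UCP{\cS}{\mat{k}}$ with $\tau_i\in\UCP{\cS}{\mat{n_i}}$, so in particular $n_i\leq k\leq N$. By the surjectivity established at level $n_i$, each $\tau_i$ has the form $\tau_i=\tau_i'\circ\phi$ for some $\tau_i'\in\UCP{\cR}{\mat{n_i}}$. Pulling $\phi$ out of the sum, we obtain $\psi\circ\phi=\bigl(\sum_i\gamma_i^{*}\tau_i'\gamma_i\bigr)\circ\phi$, and injectivity of $\phi_k^{*}$ then forces $\psi=\sum_i\gamma_i^{*}\tau_i'\gamma_i$, a proper matrix convex combination in $\UCP{\cR}{\mat{k}}$. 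Matrix extremality of $\psi$ yields $n_i=k$ and $\psi=u_i^{*}\tau_i'u_i$ for some unitaries $u_i$; precomposing with $\phi$ gives $\psi\circ\phi=u_i^{*}\tau_iu_i$, so $\psi\circ\phi\in\partial\UCP{\cS}{\mat{k}}$. The reverse direction follows by running the same argument with $\phi^{-1}$ in place of $\phi$.

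The only genuinely non-trivial step is the appeal to Choi's theorem in the first stage; it is precisely this that allows an $N$-order isomorphism to interact with the full CP structure at levels $k\leq N$. Once this is in place, the matrix extreme part is routine bookkeeping of proper matrix convex combinations under precomposition by $\phi$.
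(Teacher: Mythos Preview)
Your proposal is correct and follows essentially the same approach as the paper: both hinge on Choi's theorem (Theorem~\ref{thm:ChoiPos}) to upgrade $k$-positivity into $\mat{k}$ to complete positivity, and both transfer a proper matrix convex decomposition of $\psi\circ\phi$ back to one of $\psi$ by precomposing with $\phi^{-1}$ (your ``surjectivity at level $n_i$'' step is exactly the paper's application of $(\phi^{-1})_{n_i}^*$). The only difference is organizational: you separate out the bijection of $\UCP{\cdot}{\mat{k}}$ as an explicit first stage and make the constraint $n_i\leq k\leq N$ explicit, whereas the paper folds these into a single argument.
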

\begin{proof}
First note that $\bm{\phi}^*$ is a matrix affine map. Indeed, by definition we get that:
\[
	\phi_n^*\qty(\sum\limits_{i=1}^k\gamma_i^*\psi_i\gamma_i)=\qty(\sum\limits_{i=1}^k\gamma_i^*\psi_i\gamma_i)\circ\phi=\sum\limits_{i=1}^k\gamma_i^*\psi_i\circ\phi\gamma_i=\sum\limits_{i=1}^k\gamma_i^*\phi_{n_i}^*\qty(\psi_i)\gamma_i
\]
for every matrix convex combination of linear maps. Moreover, note that if $k\leq N$, then for every $\psi\in\UCP{\cR}{\mat{k}}$, the map $\phi_k^*\qty(\psi)=\psi\circ\phi$ is a composition of $k$ positive maps and thus makes a $k$-positive map into $\mat{k}$, which is completely positive (by Theorem \ref{thm:ChoiPos}). Assume that $\psi\in\UCP{\cR}{\mat{k}}$ is a matrix extreme point of $\UCP{\cR}$, and assume that $\phi_k^*\psi$ is a proper matrix convex combination of the form:
\[
	\phi^*\psi=\sum\limits_{i=1}^{\ell}\gamma_i^*\psi_i\gamma_i
\]
for $\psi_i\in\UCP{\cS}{\mat{k_i}},\gamma_i\in\mat{k_i}{k}$ for $i=1,\dots,\ell$ satisfying $\sum_{i=1}^{\ell}\gamma_i^*\gamma_i=\mathds{1}_k$, and that $k_i\leq k$ for all $i$. Because $\phi^{-1}$ is an $N$ positive map, we get that:
\[
	\psi=\qty(\phi^{-1})_k^*\phi_k^*\psi=\sum\limits_{i=1}^{\ell}\gamma_i\qty(\phi^{-1})_{k_i}^*\psi_i\gamma_i
\]
is a matrix convex combination in $\UCP{\cR}$ from levels below $k$. $\psi$ is a matrix extreme point, which means that $k_i=k$ and $\psi=u_i^*\qty(\phi^{-1})_{k_i}^*\psi_iu_i$ for some unitary $u_i$. But from this it follows that $\phi_k^*\psi=u_i^*\psi_iu_i$, which proves that $\phi_k^*\psi$ is matrix extreme. Similar arguments for ${\bm{\phi^{-1}}}^*$ shows that the map is indeed bijective.
\end{proof}
\begin{prop}
\label{prop:NordisMatAffines}
Let $\phi:\cS\to\cR$ be an $N$-order isomorphism. Then, $\bm{\phi}^*$ restricts to a matrix affine homeomorphism of $\overline{{\rm co}}\qty{\partial\UCP{\cR}{\mat{k}}}_{k=1}^N$ and $\overline{{\rm co}}\qty{\partial\UCP{\cS}{\mat{k}}}_{k=1}^N$.
\end{prop}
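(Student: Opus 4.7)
The plan is to bootstrap from the bijection on matrix extreme points established in Proposition~\ref{prop:bijectionofMatExt} to a homeomorphism on their closed matrix convex hulls, using matrix affineness of $\bm{\phi}^*$ together with the weak-$*$ continuity of precomposition by $\phi$.

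First, I would record that each $\phi_n^*$ is weak-$*$ continuous on the whole space of linear maps from $\cR$ to $\mat{n}$: for any fixed $s\in\cS$, evaluation of $\phi_n^*(\psi)=\psi\circ\phi$ at $s$ equals evaluation of $\psi$ at $\phi(s)\in\cR$, which is weak-$*$ continuous in $\psi$. Continuity therefore descends to any subset, including the closed matrix convex hulls in question.

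Next, I would check that $\bm{\phi}^*$ sends $\overline{{\rm co}}\qty{\partial\UCP{\cR}{\mat{k}}}_{k=1}^N$ into $\overline{{\rm co}}\qty{\partial\UCP{\cS}{\mat{k}}}_{k=1}^N$. For a finite matrix convex combination $\psi=\sum_i\gamma_i^*\psi_i\gamma_i$ at level $n$ with $\psi_i\in\partial\UCP{\cR}{\mat{k_i}}$ and $k_i\leq N$, matrix affineness of $\bm{\phi}^*$ gives $\phi_n^*(\psi)=\sum_i\gamma_i^*\phi_{k_i}^*(\psi_i)\gamma_i$, and Proposition~\ref{prop:bijectionofMatExt} guarantees that each $\phi_{k_i}^*(\psi_i)\in\partial\UCP{\cS}{\mat{k_i}}$. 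So $\phi_n^*(\psi)$ already lies in the (non-closed) matrix convex hull of $\bigcup_{k\leq N}\partial\UCP{\cS}{\mat{k}}$. For a general point in the closed hull, I would approximate by such finite combinations and invoke the weak-$*$ continuity from the first step to land in the closure on the target side. This representation as an honest matrix convex combination of UCP maps also certifies that $\phi_n^*(\psi)$ is UCP, even at levels $n>N$ where $\phi_n^*$ is a priori only $N$-positive.

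Finally, I would apply the identical argument to $\phi^{-1}$, which is also an $N$-order isomorphism, to obtain a continuous matrix affine map $\bm{(\phi^{-1})}^*$ going from the $\cS$-side hull back to the $\cR$-side hull. The identities $(\psi\circ\phi)\circ\phi^{-1}=\psi$ and $(\psi'\circ\phi^{-1})\circ\phi=\psi'$ make the two maps mutual inverses, and continuity of both directions then yields the desired matrix affine homeomorphism. The main (mild) obstacle is exactly the potential failure of $\phi_n^*$ to preserve UCP-ness at levels $n>N$, which is why the argument has to go through matrix convex combinations of extreme points coming from levels $\leq N$, rather than directly quoting $n$-positivity of $\phi$ together with Choi's theorem as in the proof of Proposition~\ref{prop:bijectionofMatExt}.
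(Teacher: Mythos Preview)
Your proposal is correct and follows essentially the same route as the paper: bijection on matrix extreme points via Proposition~\ref{prop:bijectionofMatExt}, extension to the matrix convex hull by matrix affineness, and extension to the closure by weak-$*$ continuity of precomposition, with the identical argument applied to $\phi^{-1}$. You are in fact slightly more careful than the paper in one respect: you explicitly explain why $\phi_n^*(\psi)$ remains UCP at levels $n>N$ (via its representation as a matrix convex combination of UCP maps from levels $\leq N$), whereas the paper's proof states continuity only for $k\leq N$ and leaves this point implicit.
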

\begin{proof}
We already showed that the restriction of $\bm{\phi}^*$ to the matrix extreme points at levels below $N$ is a bijection. But because $\bm{\phi}$ is matrix affine with ${\bm{\phi^{-1}}}^*$ as an inverse, we get that it also restricts to a matrix affine invertible map between ${\rm co}\qty{\partial\UCP{\cR}{\mat{k}}}_{k=1}^N$ and ${\rm co}\qty{\UCP{\cS}{\mat{k}}}_{k=1}^N$. In order to extend this map to the closure, it suffices to prove that $\bm{\phi}^*_k:L(\cR,\mat{k})\to L(\cS,\mat{k})$ is continuous for all $k\leq N$. Indeed, since the topology is the weak* topology, we get that $\psi_n$ converges to $\psi$ in $L(\cR,\mat{k})$ if and only if $\psi_n(r)$ converges to $\psi(r)$ for all $r\in\cR$. But this means that for all $s\in\cS$, $\phi(s)\in \cR$ and thus $\phi_k^*(\psi_n)(s)=\psi_n(\phi(s))$ converges to $\psi(\phi(s))=\phi_k^*(\psi)(s)$ and thus $\phi_k^*\psi_n$ converges to $\phi_k^*\psi$, which means that $\phi_k^*$ is continuous. Finally, we get that a limit point of ${\rm co}\qty{\partial\UCP{\cR}{\mat{k}}}_{k=1}^N$ is mapped to a limit point of ${\rm co}\qty{\partial\UCP{\cS}{\mat{k}}}_{k=1}^N$, which completes the proof.
\end{proof}
We are ready to provide a proof for Theorem \ref{thm:NorderisComplete}.
\begin{proof}[Proof of Theorem \ref{thm:NorderisComplete}]
A complete order isomorphism is in particular an $N$-order isomorphism, so we will only prove the converse. Assuming $\phi$ is an $N$-order isomorphism, we use Proposition \ref{prop:NordisMatAffines}, to show that $\bm{\phi}^*:\UCP{\cR}\to\UCP{\cS}$ is a matrix affine homeomorphism (recall that both sets are $n$-generated for $n\leq N$). Therefore, we can now apply Proposition \ref{prop:mataffhomImpliesCOI} to get a complete order isomorphism of $\cS$ and $\cR$ given by $\delta_{\cR}^{-1}\circ\qty(\bm{\phi}^*)_*\circ\delta_{\cS}$. This completes the proof.
\end{proof}
It is worth noting, that a direct calculation which follows the last part of the proof, shows that:
\[
	\qty(\qty(\bm{\phi}^*)_*\circ\delta_{\cS}\qty(s))_k\qty(\psi)=\qty(\delta_{\cS}\qty(s))_k\circ \phi_k^*\qty(\psi)=\qty(\delta_{\cS}\qty(s))_k\qty(\psi\circ\phi)=\psi\qty(\phi\qty(s))=\qty(\delta_{\cR}\qty(\phi(s)))_k\qty(\psi),
\]
from which it holds that $\delta_{\cR}^{-1}\circ\qty(\bm{\phi}^*)_*\circ\delta_{\cS}=\phi$. Therefore, the $N$-order isomorphism $\phi$ is the complete order isomorphism.
\subsection{Subhomogeneous matrix ranges}
\begin{thm}
\label{thm:NordForRanges}
Let $\bs,\br$ be two $d$-tuples of operators such that $\cS(\bs)$ is $N_{\cS}$-Subhomogeneous and $\cS(\br)$ is $N_{\cR}$-Subhomogeneous, respectively. For $N=\max\{N_{\cS},N_{\cR}\}$, the following are equivalent:
\begin{enumerate}
\item $\bs$ and $\br$ are $N$-order equivalent.
\item $\bs$ and $\br$ are completely order equivalent.
\item $\W{N}{\bs}=\W{N}{\br}$.
\item $\W{\bs}=\W{\br}$.
\end{enumerate}
\end{thm}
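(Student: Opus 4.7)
The plan is to establish the cycle of implications $(2)\Rightarrow(1)\Rightarrow(3)\Rightarrow(4)\Rightarrow(2)$. Three of these are quick consequences of Theorem \ref{thm:DDSS} and the definitions; the essential use of the subhomogeneity hypothesis happens only in the step $(3)\Rightarrow(4)$, which is the main content.

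First, $(2)\Rightarrow(1)$ is immediate from the definition, since a complete order isomorphism is an $n$-order isomorphism for every $n$, in particular for $n=N$. For $(1)\Rightarrow(3)$, if $\phi\colon\cS(\bs)\to\cS(\br)$ is an $N$-order isomorphism sending $\bs$ to $\br$, then both $\phi$ and $\phi^{-1}$ are unital $N$-positive, so applying Theorem \ref{thm:DDSS}(1) in each direction yields $\W{N}{\br}\subset\W{N}{\bs}$ and $\W{N}{\bs}\subset\W{N}{\br}$, hence equality.

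The main step is $(3)\Rightarrow(4)$, where I would first transfer the subhomogeneity from the operator systems $\cS(\bs),\cS(\br)$ to the matrix ranges. The example following Theorem \ref{thm:DDSS} identifies $\UCP{\cS(\bs)}$ with $\W{\bs}$ via a matrix affine homeomorphism, and matrix affine homeomorphisms preserve proper matrix convex combinations level-by-level, hence send matrix extreme points to matrix extreme points. Consequently, $\W{\bs}$ is $N_{\cS}$-subhomogeneous and $\W{\br}$ is $N_{\cR}$-subhomogeneous as compact matrix convex sets in $\C^d$, and both degrees are bounded by $N$. Proposition \ref{prop:NgeneratedisMinimal} then gives
\[
\W{\bs}=\cW^{N\text{-min}}(\W{\bs}),\qquad \W{\br}=\cW^{N\text{-min}}(\W{\br}),
\]
so each matrix range is the smallest matrix convex set having the prescribed $N$-th level. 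The equality $\W{N}{\bs}=\W{N}{\br}$ therefore propagates to all levels, giving $\W{\bs}=\W{\br}$.

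Finally, $(4)\Rightarrow(2)$ follows from Theorem \ref{thm:DDSS}(2) applied in both directions: the two inclusions of matrix ranges produce UCP maps $\phi\colon\cS(\bs)\to\cS(\br)$ and $\psi\colon\cS(\br)\to\cS(\bs)$ sending the generators to the respective generators, and these are mutually inverse because a unital map is determined by its action on a generating set. Thus $\phi$ is a complete order isomorphism implementing the equivalence. The main obstacle I anticipate is the precise verification that the matrix affine homeomorphism between $\UCP{\cS(\bs)}$ and $\W{\bs}$ transports subhomogeneity; however, this is a purely formal consequence of the fact that such a homeomorphism commutes with proper matrix convex combinations at each level, so it sets up a level-wise bijection of matrix extreme points, from which the subhomogeneity transfers directly.
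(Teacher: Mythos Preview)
Your proof is correct and follows essentially the same route as the paper: the heart of the argument is the step $(3)\Rightarrow(4)$ via Proposition~\ref{prop:NgeneratedisMinimal}, and the remaining implications come from Theorem~\ref{thm:DDSS}. The only minor difference is that the paper cites Theorem~\ref{thm:NorderisComplete} directly for $(1)\Leftrightarrow(2)$, whereas your cycle $(1)\Rightarrow(3)\Rightarrow(4)\Rightarrow(2)$ bypasses that theorem and obtains $(1)\Rightarrow(2)$ entirely through the matrix-range description; this is a harmless and arguably cleaner organization, and your explicit justification that the matrix affine homeomorphism $\UCP{\cS(\bs)}\cong\W{\bs}$ transports subhomogeneity is a point the paper leaves implicit.
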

\begin{proof}
$\qty(1)\Longleftrightarrow\qty(2)$ is a consequence of Theorem \ref{thm:NorderisComplete}. $\qty(1)\Longrightarrow\qty(3)$ and $\qty(4)\Longleftrightarrow\qty(2)$ follow from Theorem \ref{thm:DDSS}, which means we only need to prove $\qty(3)\Longrightarrow\qty(4)$. But note that by Proposition \ref{prop:NgeneratedisMinimal}, we have that:
\[
	\W{\bs}=\cW^{N\mbox{-min}}\qty(\W{\bs})={\rm co}\qty(\W{N}{\bs})={\rm co}\qty(\W{N}{\br})=\cW^{N\mbox{-min}}\qty(\W{\br})=\W{\br}
\]
which completes the proof.
\end{proof}
In general, choosing an $N$ lower then $\max\{N_{\cS},N_{\cR}\}$ might not be sufficient. Before giving a concrete example, we will require the following lemma.
\begin{lem}
\label{lem:numericalRange}
Let $\bs,\br$ be any two $d$-tuples. Then, $\W{1}{\bs}=\W{1}{\br}$ if and only if the tuples are $1$-order equivalent.
\end{lem}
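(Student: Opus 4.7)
The plan is to handle the two directions independently. The forward implication is essentially immediate from Theorem \ref{thm:DDSS}(1). A $1$-order equivalence is, by definition, a unital positive bijection $\phi:\cS(\bs)\to\cS(\br)$ with $\phi(s_i)=r_i$ whose inverse is also positive. Applying Theorem \ref{thm:DDSS}(1) at $n=1$ to $\phi$ gives $\W{1}{\br}\subset\W{1}{\bs}$, and applying it to $\phi^{-1}$ gives the reverse inclusion, so $\W{1}{\bs}=\W{1}{\br}$.

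For the converse, I will construct the $1$-order isomorphism directly. Assuming $\W{1}{\bs}=\W{1}{\br}$, I define a linear map $\phi:\cS(\bs)\to\cS(\br)$ on the spanning set by $\phi(I)=I$, $\phi(s_i)=r_i$, $\phi(s_i^*)=r_i^*$, extended by linearity. The main technical step, and the real obstacle, is well-definedness: if $\gamma I+\sum_i\alpha_i s_i+\sum_i\beta_i s_i^*=0$ in $\cS(\bs)$, I need the corresponding combination $\gamma I+\sum_i\alpha_i r_i+\sum_i\beta_i r_i^*$ to vanish in $\cS(\br)$. The argument uses the hypothesis as follows: every state $\psi$ on $\cS(\br)$ yields a point $(\psi(r_1),\dots,\psi(r_d))\in\W{1}{\br}=\W{1}{\bs}$, which is realized by some state $\tilde\psi$ on $\cS(\bs)$ with $\tilde\psi(s_i)=\psi(r_i)$ and $\tilde\psi(s_i^*)=\psi(r_i^*)$. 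Applying $\tilde\psi$ to the vanishing combination shows that $\psi$ annihilates the corresponding combination of the $r_i$'s. Since states separate points of any operator system (via Hahn-Banach inside a containing $C^*$-algebra), this combination is zero, so $\phi$ is well-defined; it is manifestly unital, linear, and self-adjoint.

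Positivity of $\phi$ is then immediate from the same setup: for $a\geq 0$ in $\cS(\bs)$ and any state $\psi$ on $\cS(\br)$, the associated state $\tilde\psi$ on $\cS(\bs)$ agrees with $\psi\circ\phi$ on the generators, hence on all of $\cS(\bs)$, so $\psi(\phi(a))=\tilde\psi(a)\geq 0$, forcing $\phi(a)\geq 0$. Interchanging the roles of $\bs$ and $\br$ produces a positive unital map $\cS(\br)\to\cS(\bs)$ sending $r_i\mapsto s_i$; these two maps are mutually inverse on the generating sets and therefore on the whole operator systems, delivering the desired $1$-order equivalence. The bulk of the work is concentrated in the well-definedness step; once that is in hand, positivity, bijectivity, and positivity of the inverse all follow from the symmetry of the hypothesis together with the standard fact that states separate points.
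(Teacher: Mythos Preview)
Your proof is correct and follows essentially the same route as the paper's. Both arguments use Theorem~\ref{thm:DDSS} for the forward direction, and for the converse both define the map on generators and verify positivity by pulling back states on $\cS(\br)$ to states on $\cS(\bs)$ via the equality $\W{1}{\bs}=\W{1}{\br}$; the only cosmetic difference is that you separate well-definedness from positivity as two explicit steps, whereas the paper packages them together by observing that the positivity-preserving property of the formal correspondence already forces well-definedness.
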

\begin{proof}
By \ref{thm:DDSS}, the tuples being $1$-order equivalent implies that $\W{1}{\bs}=\W{1}{\br}$, which means we only have to prove the converse. We assume that the equality holds and show that the mapping $s_i\mapsto r_i$ extends to a well defined $1$-order isomorphism of the generated operator systems. Define $\phi:\cS(\bs)\to\cS(\br)$ by:
\[
	\phi\qty(a_0\mathds{1}_{\bs}+\sum_ia_is_i+b_is_i^*):=a_0\mathds{1}_{\br}+\sum_ia_ir_i+b_ir_i^*.
\]
We show that whenever the argument on the left hand side is positive, so is the image on the right hand side. Positive maps are bounded, which will also prove that this map is a well defined extension from ${\rm span}\left\{s_i\right\}$ to $\cS(\bs)$. Assuming positivity for the argument on the right hand side, we will prove that for any state $\psi:\cS(\br)\to\mathbb{C}$:
\[
	\psi\qty(a_0\mathds{1}_{\br}+\sum_ia_ir_i+b_ir_i^*)=a_0+\sum_ia_i\psi\qty(r_i)+b_i\psi\qty(r_i)^*\geq 0.
\]
Indeed, because $\psi:\cR\to\mathbb{C}=\mathbb{M}_1$ is positive, it is completely positive, which means:
\[
	\qty(\psi\qty(r_1),\dots,\psi\qty(r_d))\in\W{1}{\br}=\W{1}{\bs},
\]
and thus $\qty(\psi\qty(r_1),\dots,\psi\qty(r_d))$ is of the form $\qty(\rho\qty(s_1),\dots,\rho\qty(s_d))$ for some completely positive $\rho:\cS\to\mathbb{C}$. We can now conclude that:
\[
	a_0+\sum_ia_i\psi\qty(r_i)+b_i\psi\qty(r_i)^*=a_0+\sum_ia_i\rho\qty(s_i)+b_i\rho\qty(s_i)^*=\rho\qty(a_0\mathds{1}_{\bs}+\sum_ia_is_i+b_is_i^*)\geq 0,
\]
which is what we wanted to prove. This gives is positivity for $\phi$ and positivity of $\phi^{-1}$ follows the same argument as for $\phi$, which means $\phi$ is a $1$-order isomorphism.
\end{proof}
The next is an example of $1$-Subhomogeneous tuple and a $2$-Subhomogeneous tuple, which are $1$-order equivalent but not $2$-order equivalent, thus proving that in general, Theorem \ref{thm:NordForRanges} cannot be improved by choosing $N\leq\max\{N_{\cS},N_{\cR}\}$.
\begin{ex}
Let $\cH=L^2\qty(\bar{\mathbb{B}}_2)$ such that $\bar{\mathbb{B}}_2$ is the closed unit ball in $\mathbb{R}^2$, and let $M_{x_1},M_{x_2}$ be the multiplication by coordinate functions $M_{x_i}f\qty(x_1,x_2)=x_if\qty(x_1,x_2)$ for $i=1,2$. $\qty(M_{x_1},M_{x_2})$ is a pair of self-adjoint and commuting operators, which means by \cite[Corollary 4.4]{DDSS} (and also, the first paragraph of Section 3) that the matrix range of this set is the minimal matrix convex set which has $\sigma\qty(M_{x_1},M_{x_2})$ at its first level:
\[
	\W{M_{x_1},M_{x_2}}=\cW^{\mbox{min}}\qty(\sigma\qty(M_{x_1},M_{x_2})).
\]
Moreover, \cite[Theorem 2.7]{DDSS} tells us that $\W{1}{M_{x_1},M_{x_2}}={\rm conv}\left(\sigma\qty(M_{x_1},M_{x_2})\right)=\bar{B}_2$. We then consider another pair $\qty(F_1,F_2)\in B(\mathbb{C}^2)^2$ of matrices given by:
\[
	F_1=\mqty(1 & 0 \\ 0 & -1),\quad F_2=\mqty(0 & 1 \\ 1 & 0).
\]
It is easily verified that this is a pair of anti-commuting unitaries. Applying \cite[Corollary 5.9]{passer2018minimal}, we get that the mapping $M_{x_i}\mapsto F_i$ for $i=1,2$, extends to a unital and positive map from $\cS(M_{x_1},M_{x_2})$ to $\cS(F_1,F_2)$, and then, by Theorem \ref{thm:DDSS}:
\[
	\W{1}{F_1,F_2}\subset\W{1}{M_{x_1},M_{x_2}}.
\]
We can prove that the converse inclusion also holds. Note that for every $\xi=\qty(\xi_1,\xi_2)\in\mathbb{C}^2$ with $|\xi_1|^2+|\xi_2|^2=1$, we get that:
\[
	\qty(\xi^*F_1\xi,\xi^*F_2\xi)\in\W{1}{F_1,F_2}.
\]
Therefore, we have that for all $\theta\in[0,2\pi]$, we can choose $\xi=\qty(r_1e^{i\theta_1},r_2e^{i\theta_2})$ with:
\[
	r_1=\sqrt{\frac{1+\cos{(\theta)}}{2}},\quad r_2=\sqrt{\frac{1-\cos{(\theta)}}{2}},\quad \cos{(\theta_1-\theta_2)}={\rm sgn}\qty(\sin{(\theta)}),
\]
and get that:
\[
	\norm{\xi}=1,\quad \qty(\xi^*F_1\xi,\xi^*F_2\xi)=\qty(\cos{(\theta)},\sin{(\theta)}).
\]
This proves that $\W{1}{F_1,F_2}$ contains the unit circle in $\mathbb{R}^2$, and by convexity:
\[
	\W{1}{M_{x_1},M_{x_2}}=\bar{B}_2\subset\W{1}{F_1,F_2}
\]
which gives the equality. By Lemma \ref{lem:numericalRange}, we have that the pairs are $1$-order equivalent.\par
We now show that they cannot be $2$-order equivalent. Indeed, by Theorem \ref{thm:RationalMatrixRange} we have that $\W{F_1,F_2}$ is $2$-Subhomogeneous, while $\W{M_{x_1},M_{x_2}}$ is $1$-Subhomogeneous (by minimality). Therefore, a $2$-order isomorphism would imply an existence of a matrix extreme point in $\W{2}{M_{x_1},M_{x_2}}$ by Proposition \ref{prop:bijectionofMatExt} which is impossible.
\end{ex}
Another consequence of the fact that $N$-Subhomogeneous operator systems correspond to $N$-Subhomogeneous matrix range, is the following generalization of Theorem \ref{thm:ChoiPos} for Subhomogeneous operator systems.
\begin{thm}
\label{thm:NgeneratedChoiProperty}
Let $\bs,\br$ be two operator systems such that $\cS(\bs)$ is $N$-Subhomogeneous. If the mapping $r_i\mapsto s_i$ defines a unital $N$-positive map $\phi:\cS(\br)\to\cS(\bs)$, it is completely positive.
\end{thm}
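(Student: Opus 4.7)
The plan is to translate the statement into the language of matrix ranges and exploit the subhomogeneity of $\cS(\bs)$ to promote an inclusion of the $N$-th level to an inclusion at all levels. All the heavy lifting has already been done: we use Theorem \ref{thm:DDSS} in both directions, and Proposition \ref{prop:NgeneratedisMinimal} to recover the full matrix range from its $N$-th level.

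First, since $\phi:\cS(\br)\to\cS(\bs)$ is a unital $N$-positive map sending $r_i\mapsto s_i$, part (1) of Theorem \ref{thm:DDSS} gives the inclusion $\W{N}{\bs}\subset\W{N}{\br}$. Next, because $\cS(\bs)$ is $N$-Subhomogeneous, so is its matrix state space, and through the matrix affine homeomorphism $\UCP{\cS(\bs)}\cong\W{\bs}$ we conclude that $\W{\bs}$ is $n$-Subhomogeneous for some $n\leq N$. Proposition \ref{prop:NgeneratedisMinimal} therefore gives
\[
    \W{\bs}=\cW^{N\mbox{-min}}\qty(\W{\bs})={\rm co}\qty(\W{N}{\bs}).
\]
Combining with the inclusion at level $N$ and matrix convexity of $\W{\br}$, we obtain
\[
    \W{\bs}={\rm co}\qty(\W{N}{\bs})\subset{\rm co}\qty(\W{N}{\br})\subset\W{\br}.
\]

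Now part (2) of Theorem \ref{thm:DDSS} guarantees the existence of a UCP map $\psi:\cS(\br)\to\cS(\bs)$ sending $r_i\mapsto s_i$. The final step is to identify $\psi$ with $\phi$: since positive (unital) maps are self-adjoint and linear, both $\phi$ and $\psi$ are determined by the values they take on the generating set $\qty{\mathds{1},r_i,r_i^*}$, on which they agree. Hence $\phi=\psi$, and $\phi$ is completely positive.

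I do not anticipate a serious obstacle here; the potentially delicate point is only the uniqueness at the end, namely verifying that an $N$-positive unital map is uniquely pinned down by its values on the generators $r_i$, which follows immediately from self-adjointness of positive maps. Everything else is a direct application of the results already established.
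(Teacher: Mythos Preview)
Your proof is correct and follows essentially the same route as the paper: both establish $\W{N}{\bs}\subset\W{N}{\br}$, invoke Proposition \ref{prop:NgeneratedisMinimal} to upgrade this to $\W{\bs}\subset\W{\br}$, and then apply Theorem \ref{thm:DDSS}(2). The only cosmetic differences are that the paper spells out the composition argument behind Theorem \ref{thm:DDSS}(1) rather than citing it, and you make the final identification $\phi=\psi$ via self-adjointness explicit where the paper leaves it implicit.
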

\begin{proof}
By Proposition \ref{prop:NgeneratedisMinimal}, $\W{\bs}=\cW^{N\mbox{-min}}(\bs)$. For any UCP map $\psi:\cS(\bs)\to\mat{n}$ (with $n\leq N$), we have that $\psi\circ\phi:\cS(\br)\to \mat{n}$ is completely positive, which means that:
\[
	\W{n}{\bs}\subset\W{n}{\br}.
\]
From this inclusion, and the minimality of $\W{\bs}$:
\[
	\W{\bs}={\rm co}\qty(\W{N}{\bs})\subset \W{\br}.
\]
We can now use Theorem \ref{thm:DDSS} again to conclude that the mapping $r_i\mapsto s_i$ extends to a UCP map, which is precisely what we wanted to prove.
\end{proof}
\subsection{Matrix ranges generated by unitaries}
The case where $\cS,\cR$ are both generated by $d$-tuples $\bs,\br$ of unitaries is of particular interest. This is because unitaries are known to have some ``rigidity'' properties with respect to the algebraic structure of the algebras that they generate.\par
Recall that for an operator system $\cS$ and a $C^*$-algebra $\mathcal{B}=B(\cH)$, we say that the map $\phi\in\UCP{\cS}{\cB}$ has the \emph{unique extension property} if $\phi$ has a unique UCP extension to some $\Phi\in\UCP{C^*(\cS)}{\cB}$, and this map is a $*$-homomorphism. UCP maps which map unitary $d$-tuples to unitary $d$-tuples always have this property.
\begin{thm}
\label{thm:UnitariesUEP}
Let $\bs,\br$ be $d$-tuples of unitaries and assume that $s_i\mapsto r_i$ defines a UCP $\phi:\cS(\bs)\to\cS(\br)$. Then, this map has the unique extension property.
\end{thm}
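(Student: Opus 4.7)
The plan is to show that any UCP extension of $\phi$ is automatically multiplicative on $C^*(\bs)$, which simultaneously yields both the existence of a multiplicative extension and its uniqueness. The key tool is Choi's theorem on the multiplicative domain: if $\Psi$ is a UCP map and $a$ satisfies $\Psi(a^*a)=\Psi(a)^*\Psi(a)$ and $\Psi(aa^*)=\Psi(a)\Psi(a)^*$, then $\Psi(ab)=\Psi(a)\Psi(b)$ and $\Psi(ba)=\Psi(b)\Psi(a)$ for every $b$ in the domain.

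First, by Arveson's extension theorem, $\phi$ extends to some UCP map $\Phi:C^*(\bs)\to B(\cK)$, where $\cK$ is the Hilbert space on which $\cS(\br)$ acts. For each generator $s_i$, unitarity of both $s_i$ and $r_i=\Phi(s_i)$ gives
\[
\Phi(s_i^*s_i)=\Phi(\mathds{1})=\mathds{1}=r_i^*r_i=\Phi(s_i)^*\Phi(s_i),
\]
and similarly $\Phi(s_is_i^*)=\Phi(s_i)\Phi(s_i)^*$. Hence every $s_i$ lies in the multiplicative domain $\cM_\Phi$ of $\Phi$. Since $\cM_\Phi$ is a unital $*$-subalgebra of $C^*(\bs)$ that is norm-closed, it contains the $*$-algebra generated by $\bs$ and thus all of $C^*(\bs)$. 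Therefore $\Phi$ is multiplicative on its entire domain, i.e. $\Phi$ is a $*$-homomorphism.

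For uniqueness, suppose $\Phi_1,\Phi_2$ are two UCP extensions of $\phi$. Both are $*$-homomorphisms by the argument above, and both agree on the generating set $\bs$ of $C^*(\bs)$; by continuity and multiplicativity they agree on every $*$-polynomial in $\bs$, and hence on all of $C^*(\bs)$. This gives the unique extension property.

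There is no real obstacle here: the whole argument is a clean application of the multiplicative domain trick, which is tailor-made for unitary generators, combined with Arveson's extension theorem. The one point worth being careful about is formulating the statement of the multiplicative domain result for operator systems (not $C^*$-algebras), but since we only invoke it after extending $\phi$ to a UCP map on $C^*(\bs)$, the standard $C^*$-algebraic version is exactly what is needed.
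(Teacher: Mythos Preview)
Your proof is correct and takes a genuinely different route from the paper's. The paper proceeds via Stinespring dilation: after extending $\phi$ to a UCP map $\Phi$ by Arveson, it dilates $\Phi$ to a $*$-representation $\pi$ on a larger space, writes $\pi(s_i)$ as a $2\times 2$ block matrix with $\Phi(s_i)=r_i$ in the upper-left corner, and uses unitarity of both $\pi(s_i)$ and $r_i$ to force the off-diagonal blocks to vanish; block-diagonality on the generators then yields multiplicativity of $\Phi$. Your argument via Choi's multiplicative domain packages the same Schwarz-inequality saturation into a single known lemma and avoids the explicit matrix computation. Both approaches exploit the identical phenomenon (unitaries attain equality in Kadison's inequality), but yours is more streamlined and, notably, makes the uniqueness part explicit: you observe that \emph{every} UCP extension is forced to be a $*$-homomorphism by the multiplicative-domain argument, and two $*$-homomorphisms agreeing on generators coincide. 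The paper's proof, as written, shows that the particular extension obtained via Arveson is multiplicative and leaves the uniqueness step implicit (though the same block-matrix argument applies to any UCP extension).
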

\begin{proof}
We may assume without loss of generality that $C^*(\bs)$ and $C^*(\br)$ are concrete subalgebras of operators on Hilbert spaces $\cH,\cK$ respectively. In that case, we may apply Arveson's extension theorem (see \cite[Corollary 1.2.3]{Arveson69}) and find a UCP $\Phi:C^*(\bs)\to B(\cK)$ which extends $\phi$. Then, we may apply Stinespring's dilation theorem (see \cite[Theorem 1]{Stinespring}) to find a Hilbert space $\cK'$, a $*$-homomorphism $\pi:C^*(\bs)\to B(\cK')$ and an isometry $V:\cK\to\cK'$ such that for all $s\in C^*(\bs)$:
\[
	\Phi\qty(s)=V^*\pi(s)V.
\]
Since $V$ is an isometry, we may also identify $\cK$ with the closed subspace $V\cK\subset\cK'$. This way, we can decompose $\cK'=\cK\oplus\cK^{\perp}$ and write for all $a\in C^*(\bs)$:
\[
	\pi\qty(a)=\mqty(\Phi\qty(a) & \beta \\ \gamma & \delta).
\]
In particular, for all $i$, we have that $s_i$ and $\phi(s_i)=\Phi(s_i)$ are unitaries, which means that:
\begin{multline*}
	\pi\qty(s_i)^*\pi(s_i)=\mqty(\phi(s_i)^*\phi(s_i)+\gamma_i^*\gamma_i & \phi(s_i)^*\beta_i+\gamma_i^*\delta_i \\ \beta_i^*\phi(s_i)+\delta_i^*\gamma_i & \beta_i^*\beta_i+\delta_i^*\delta_i) \\=\mqty({\rm Id}_{\cK}+\gamma_i^*\gamma_i & \phi(s_i)^*\beta_i+\gamma_i^*\delta_i \\ \beta_i^*\phi(s_i)+\delta_i^*\gamma_i & \beta_i^*\beta_i+\delta_i^*\delta_i)=\mqty({\rm Id}_{\cK} & 0 \\ 0 & {\rm Id}_{\cK^{\perp}}),
\end{multline*}
so we can conclude that $\gamma_i^*\gamma_i=0$ (and similarly, $\beta_i^*\beta_i=0$), which implies that $\gamma_i=\beta_i=0$. Therefore:
\[
	\pi\qty(s_i)=\mqty(r_i & 0 \\ 0 & \delta_i)
\]
for all $i=1,\dots,d$, and since $s_i$ generates $C^*(\bs)$, we get that $\Phi$ is multiplicative, which makes it a $*$-homomorphism.
\end{proof}
\begin{cor}
\label{cor:COIForUnitariesIsIsomorphism}
For a $d$-tuples $\bs,\br$ of unitaries, the mapping $s_i\mapsto r_i$ defines a complete order isomorphism of $\cS(\bs)$ and $\cS(\br)$ if and only if it extends to a $*$-isomorphism of $C^*(\bs)$ and $C^*(\br)$. In particular, that $*$-isomorphism is the unique UCP extension of that map.
\end{cor}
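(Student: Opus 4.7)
The plan is to derive this as a direct application of Theorem \ref{thm:UnitariesUEP} to both the given complete order isomorphism and its inverse.

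For the forward (easy) direction, suppose $s_i \mapsto r_i$ extends to a $*$-isomorphism $\Phi : C^*(\bs) \to C^*(\br)$. Then $\Phi$ and $\Phi^{-1}$ are both $*$-homomorphisms, hence unital and completely positive. Restricting them to $\cS(\bs)$ and $\cS(\br)$ respectively produces mutually inverse UCP maps between the operator systems, so $s_i \mapsto r_i$ is a complete order isomorphism.

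For the converse, suppose $\phi : \cS(\bs) \to \cS(\br)$ is a complete order isomorphism sending $s_i$ to $r_i$. Since $\phi$ is UCP and maps the unitary tuple $\bs$ to the unitary tuple $\br$, Theorem \ref{thm:UnitariesUEP} applies and gives a unique UCP extension $\Phi : C^*(\bs) \to C^*(\br)$, which is moreover a $*$-homomorphism. Applying the same theorem to the inverse $\phi^{-1} : \cS(\br) \to \cS(\bs)$ (also UCP by assumption, and mapping the unitary tuple $\br$ to the unitary tuple $\bs$) yields a unique UCP extension $\Psi : C^*(\br) \to C^*(\bs)$, again a $*$-homomorphism.

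It then remains to check that $\Phi$ and $\Psi$ are mutually inverse. The compositions $\Psi \circ \Phi$ and $\Phi \circ \Psi$ are $*$-homomorphisms that agree with the identity on the generating tuples $\bs$ and $\br$ respectively; since these tuples generate the respective $C^*$-algebras, both compositions equal the identity. Thus $\Phi$ is a $*$-isomorphism. The uniqueness statement in the corollary follows immediately from the uniqueness asserted in Theorem \ref{thm:UnitariesUEP}. I do not expect any real obstacle here; the only thing to be careful about is invoking the unique extension property on \emph{both} $\phi$ and $\phi^{-1}$ in order to obtain a two-sided inverse at the $C^*$-level, rather than just a single $*$-homomorphism extending $\phi$.
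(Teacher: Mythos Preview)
Your proposal is correct and is exactly the argument the paper intends: the corollary is stated immediately after Theorem \ref{thm:UnitariesUEP} with no separate proof, and the natural derivation is precisely to apply that theorem to both $\phi$ and $\phi^{-1}$ and then observe that the resulting $*$-homomorphisms compose to the identity on the generators. The only small point worth making explicit is that the unique UCP extension $\Phi$ produced by Theorem \ref{thm:UnitariesUEP} a priori lands in $B(\cK)$, but since it is a $*$-homomorphism sending $s_i$ to $r_i$ its image lies in $C^*(\br)$, so regarding it as a map $C^*(\bs)\to C^*(\br)$ is justified.
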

For a tuple $\bs$, we say that $\bs$\emph{ dilates} to $\br$ if there are faithful $*$-representations $\pi:C^*(\bs)\to B(\cH),\rho:C^*(\br)\to B(\cK)$ and an isometry $V:\cH\to\cK$ such that:
\[
	\pi\qty(s_i)=V^*\rho(r_i)V
\]
for all $i=1,\dots,d$. Note that identifying $V\cH$ with $\cH$ as closed subspaces of $\cK$, we can also write the dilation as:
\[
	\pi\qty(s_i)=P_{\cH}\left.\rho(r_i)\right|_{\cH},
\]
Note that the dilation gives rise to a UCP map of $\cS(\br)$ to $\cS(\bs)$ which maps $r_i$ to $s_i$. The converse is also true, meaning that the existence of such a UCP map implies a dilation by using Arveson's extension theorem and Stinespring's dilation theorem. We can now give significant improvement of Theorem \ref{thm:NordForRanges} for tuples of unitaries.
\begin{thm}
\label{thm:equivalenceOfUnitaries}
Let $\bs,\br$ be two $d$-tuples of unitaries such that $\W{\bs}$ and $\W{\br}$ are $N_{\bs}$-Subhomogeneous and $N_{\br}$-Subhomogeneous, respectively. Setting $N=\max\{N_{\bs},N_{\br}\}$, the following are equivalent.
\begin{enumerate}
\item $\bs$ and $\br$ are $N$-order equivalent.
\item $\bs$ and $\br$ are completely order equivalent.
\item $\bs$ dilates to $\br$ and $\br$ dilates to $\bs$.
\item $\W{N}{\bs}=\W{N}{\br}$.
\item $\W{\bs}=\W{\br}$.
\item $\bs$ and $\br$ are $*$-isomorphic.
\end{enumerate}
\end{thm}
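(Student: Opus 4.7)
The backbone of the proof is already in place: by Theorem~\ref{thm:NordForRanges} the equivalences $(1)\Leftrightarrow(2)\Leftrightarrow(4)\Leftrightarrow(5)$ hold verbatim, since the hypotheses that $\W{\bs}$ and $\W{\br}$ are $N_{\bs}$- and $N_{\br}$-subhomogeneous translate directly, via the matrix affine homeomorphism between $\UCP{\cS(\bs)}$ and $\W{\bs}$, to $\cS(\bs)$ and $\cS(\br)$ being subhomogeneous at the same levels. So the plan is to splice (3) and (6) into this already-closed cycle by proving $(2)\Leftrightarrow(6)$ and $(3)\Leftrightarrow(5)$.

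For $(2)\Leftrightarrow(6)$ I would simply invoke Corollary~\ref{cor:COIForUnitariesIsIsomorphism}: any $*$-isomorphism of $C^*(\bs)$ and $C^*(\br)$ sending $s_i\mapsto r_i$ restricts to a UCP map between the operator systems with UCP inverse, and conversely Theorem~\ref{thm:UnitariesUEP} shows that a complete order isomorphism $s_i\mapsto r_i$ has the unique extension property, producing a multiplicative extension that is automatically a $*$-isomorphism (its inverse gets built by applying the same argument to $r_i\mapsto s_i$).

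For $(3)\Leftrightarrow(5)$ I would use the dictionary between dilations and UCP maps that is spelled out just above the theorem statement. If $\bs$ dilates to $\br$ via faithful representations $\pi,\rho$ and an isometry $V$ satisfying $\pi(s_i)=V^*\rho(r_i)V$, then $x\mapsto \pi^{-1}(V^*\rho(x)V)$ is a UCP map $\cS(\br)\to\cS(\bs)$ sending $r_i\mapsto s_i$, and by Theorem~\ref{thm:DDSS}(2) this gives $\W{\bs}\subset\W{\br}$; the symmetric dilation supplies the reverse inclusion. For the converse, Theorem~\ref{thm:DDSS}(2) turns $\W{\bs}=\W{\br}$ into UCP maps $\cS(\bs)\to\cS(\br)$ and $\cS(\br)\to\cS(\bs)$ carrying generators to generators, and Arveson's extension theorem followed by Stinespring produces representations and isometries realizing each as a compression of the other.

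The only real nuisance is that the definition of dilation demands that $\pi$ and $\rho$ be \emph{faithful}, whereas Stinespring merely returns some $*$-representation; the fix is routine, replacing the Stinespring pieces by their direct sums with any faithful representation of $C^*(\bs)$, respectively $C^*(\br)$, and enlarging the isometry $V$ by padding with a zero block, which preserves the relation $\pi(s_i)=V^*\rho(r_i)V$. Once this bookkeeping is dispatched, the six conditions close into a single cycle and the theorem follows.
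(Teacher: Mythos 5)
Your proposal is correct and follows essentially the same route as the paper: the cycle $(1)\Leftrightarrow(2)\Leftrightarrow(4)\Leftrightarrow(5)$ from Theorem~\ref{thm:NordForRanges}, $(2)\Leftrightarrow(6)$ from Corollary~\ref{cor:COIForUnitariesIsIsomorphism}, and the dilation--UCP dictionary from the remarks preceding the theorem (the paper attaches $(3)$ to $(6)$ rather than to $(5)$, but since $(5)$ and $(6)$ are already linked this is immaterial). Your explicit handling of the faithfulness requirement in the definition of dilation is a small but welcome addition that the paper leaves implicit.
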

\begin{proof}
We already have $\qty(1)\Longleftrightarrow\qty(2)\Longleftrightarrow\qty(4)\Longleftrightarrow\qty(5)$. Note that $\qty(2)\Longrightarrow\qty(6)$ follows Corollary \ref{cor:COIForUnitariesIsIsomorphism}, and the preceeding remarks are essentially the proof of $\qty(3)\Longleftrightarrow\qty(6)$.
\end{proof}
\section{Operator systems related to the noncommutative tori}
In this section, we apply the tools developed so far to the theory of $q$-commuting unitaries and, more generally, $\Lambda$-commuting unitaries. Given a self-adjoint matrix $\Lambda=\qty(\lambda_{ij})_{i,j=1}^d$ with $\qty|\lambda_{ij}|=1$, we say that a $d$-tuple $u_1,\dots,u_d$ of unitaries are $\Lambda$\emph{-commuting} if $u_iu_j=\lambda_{ij}u_ju_i$. When $d=2$, $\Lambda$ is uniquely determined by a single complex number $q$ of modulus $1$, and in this case we say that $u,v$ are $q$-\emph{commuting }if $uv=qvu$.
\subsection{$q$-commuting unitaries}
When $q$ is of the form $q=e^{2\pi i\frac{k}{n}}$ for co-prime $k,n\in\mathbb{N}$, we will use the notation $q_{n,k}$ for convenience. The following proposition is folklore, and we provide a proof for completeness.
\begin{prop}
\label{prop:RationalRepresentations2}
Let $u,v$ be a pair of $q_{k,n}$-commuting unitaries, and let $\pi:C^*(u,v)\to B(\mathcal{H})$ be an irreducible representation. Then ${\rm dim}\,\mathcal{H}=n$ and there exists an orthonormal basis for $\cH$, such that $\pi\qty(u),\pi\qty(v)$ can be written in the form:
\[
	\pi\qty(u)=\lambda U,\quad \pi\qty(v)=\eta V,
\]
where $U,V$ are given by:
\begin{equation}
\label{eqn:cannonicalFormUV}
	U={\rm diag}\qty(1,q_{n,k},\dots,q_{n,k}^{n-1}),\quad V=\mqty(0 & 0 & \dots & 0 & 1 \\ 1 & 0 & \dots & 0 & 0  \\ 0 & 1 & \dots & 0 & 0 \\ \vdots & \ddots & \ddots & \ddots & \vdots \\ 0 & 0 & \ddots & 1 & 0).
\end{equation}
The pair $U,V$ will be referred as the \emph{standard representation} of $q_{n,k}$-commuting unitaries.
\end{prop}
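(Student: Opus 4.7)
The plan is to reduce the structure of $\pi$ to an explicit basis computation using the fact that $q^n = 1$. First I would set $U_0 = \pi(u)$ and $V_0 = \pi(v)$, so that $U_0, V_0$ are unitaries on $\mathcal{H}$ with $U_0 V_0 = q V_0 U_0$. An immediate check using $q^n = 1$ gives
\[
  U_0^n V_0 = q^n V_0 U_0^n = V_0 U_0^n, \qquad U_0 V_0^n = q^n V_0^n U_0 = V_0^n U_0,
\]
so both $U_0^n$ and $V_0^n$ commute with everything in $\pi(C^*(u,v))$. By Schur's lemma they are scalars: $U_0^n = \alpha I$ and $V_0^n = \beta I$ with $|\alpha| = |\beta| = 1$. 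I would then fix $n$-th roots $\lambda, \eta$ with $\lambda^n = \alpha$ and $\eta^n = \beta$.

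Since $U_0^n = \alpha I$ and $U_0$ is unitary, the spectrum of $U_0$ sits inside $\{\lambda q^j : 0 \le j \le n-1\}$. After possibly replacing $\lambda$ by another $n$-th root of $\alpha$, I would pick a unit eigenvector $e_1$ with $U_0 e_1 = \lambda e_1$ and define
\[
  e_j := \eta^{-(j-1)} V_0^{j-1} e_1, \qquad j = 1, \ldots, n.
\]
An induction on $j$ using $U_0 V_0 = q V_0 U_0$ gives $U_0 e_j = \lambda q^{j-1} e_j$, so the $e_j$ lie in pairwise distinct eigenspaces of $U_0$ and are therefore mutually orthogonal; they are unit vectors because $V_0$ is unitary and $|\eta| = 1$.

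It remains to show these $n$ vectors span $\mathcal{H}$. I would argue that $\mathcal{M} = \mathrm{span}\{e_1, \ldots, e_n\}$ is invariant under $U_0$ (each $e_j$ is an eigenvector) and under $V_0$: by construction $V_0 e_j = \eta e_{j+1}$ for $j < n$, while the wrap-around is
\[
  V_0 e_n = \eta^{-(n-1)} V_0^n e_1 = \eta^{-(n-1)} \beta e_1 = \eta e_1,
\]
using $\beta = \eta^n$. Hence $\mathcal{M}$ is a nonzero $\pi(C^*(u,v))$-invariant subspace, and irreducibility forces $\mathcal{M} = \mathcal{H}$. This simultaneously yields $\dim \mathcal{H} = n$ and reads off the matrix forms $\pi(u) = \lambda U$, $\pi(v) = \eta V$ in the orthonormal basis $\{e_1, \ldots, e_n\}$, matching the expressions in \eqref{eqn:cannonicalFormUV}.

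The heart of the argument is the Schur-lemma step together with the observation that conjugation by $V_0$ cyclically permutes the $n$ candidate eigenspaces of $U_0$. There is no serious obstacle, only bookkeeping of phases: one must choose $\lambda$ and $\eta$ consistently so that the cyclic relation $V_0 e_n = \eta e_1$ holds with the same $\eta$ as in $V_0 e_j = \eta e_{j+1}$ for $j<n$, which is precisely what the normalization $\eta^n = \beta$ ensures.
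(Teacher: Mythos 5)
Your proof is correct and follows essentially the same route as the paper: Schur's lemma applied to $\pi(u)^n$ and $\pi(v)^n$, an eigenvector of $\pi(u)$, and its orbit under $\pi(v)$ as the orthonormal basis, with the phase $\eta$ normalized so the cycle closes. The only (harmless) difference is that you extract the eigenvector directly from the polynomial identity $U_0^n=\alpha I$ rather than first proving $\dim\mathcal{H}<\infty$ via a spanning-set argument as the paper does, which slightly streamlines the bookkeeping.
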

\begin{proof}
Assume that $\pi:C^*(u,v)\to B(\cH)$ is an irreducible representation and define $\tilde{u}=\pi(u),\tilde{v}=\pi(v)$. For all $k,m\in\mathbb{N}$, we have that:
\[
	\tilde{u}^n\pi\qty(u^kv^m)=\pi\qty(u^{n+k}v^m)=q_{k,n}^{nm}\qty(u^kv^mu^n)=\pi\qty(u^kv^m)\tilde{u}^n,
\]
because $q_{k,n}^n=1$. The same calculation works for $\tilde{v}^n$ which shows that $\tilde{u}^n,\tilde{v}^n\in\pi\qty(C^*(u,v))'$. Therefore, there exists some $\xi,\zeta$ of modulus $1$ (because $\tilde{u},\tilde{v}$ are unitaries), such that $\tilde{u}^n=\xi {\rm Id}_{\cH}$ and $\tilde{v}^n=\zeta{\rm Id}_{\cH}$. Given some non-zero vector $h\in\cH$, we have
\[
	\cK={\rm span}\qty{\tilde{u}^{\ell}\tilde{v}^mh\middle|\ell,m\in\qty{-n+1,\dots,n-1}}\subset\cH
\]
is invariant of $\pi(C^*(u,v))$, and by irreducibility, $\cK=\cH$, from which it holds that $\cH$ is finite dimensional. Now, we can use the fact that $\tilde{u}$ is unitary, to find an eigenvector $\tilde{h}$ for $u$ for some eigenvalue $\lambda$ of modulus $1$, so that $\tilde{u}\tilde{h}=\lambda\tilde{h}$. But then, for all $\ell\in\mathbb{N}$:
\[
	\tilde{u}\tilde{v}^{\ell}\tilde{h}=q_{k,n}^{\ell}\tilde{v}^{\ell}\tilde{u}\tilde{h}=\lambda q_{k,n}^{\ell}\tilde{v}^{\ell}\tilde{h},
\]
so that $\qty{\tilde{h},\tilde{v}\tilde{h},\dots,\tilde{v}^{n-1}\tilde{h}}$ is a basis consisting of eigenvectors of $\tilde{u}$ for $n$ distinct eigenvalues. But again, we have that their span is an invariant subspace for $\pi(C^*(u,v))$, which means that $\cH={\rm span}\qty{\tilde{h},\dots,\tilde{v}^{n-1}\tilde{h}}$.  Finally, choose some $\eta$ such that $\eta^n=\zeta^{-1}$. A direct calculation shows that with respect to the basis $\qty{\eta^{i-1}\tilde{v}^{i-1}\tilde{h}}_{i=1}^n$, $\tilde{u}$ and $\tilde{v}$ take the desired form.
\end{proof}
Since any boundary representation is irreducible by definition, we now have the following corollary.
\begin{cor}
\label{cor:RationalAbex}
Let $\cS=\cS(u,v)$ be an operator system generated by $q_{k,n}$-commuting unitaries. Then, ${\rm Abex}\qty(\UCP{\cS})$ is not empty, and is contained in $\partial\UCP{\cS}{\mat{n}}$.
\end{cor}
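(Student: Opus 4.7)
The plan is to combine the characterization of absolute matrix extreme points from Theorem \ref{thm:absoluteIsBoundary} with the dimension constraint provided by Proposition \ref{prop:RationalRepresentations2}. Since $\cS = \cS(u,v)$ generates $C^*(\cS) = C^*(u,v)$, Theorem \ref{thm:absoluteIsBoundary} tells us that $\varphi \in \UCP{\cS}{\mat{m}}$ is an absolute matrix extreme point exactly when $\varphi$ extends to a boundary representation $\pi \colon C^*(u,v) \to B(\cH)$ with $\dim \cH = m$. Such a $\pi$ is, by definition, irreducible.

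From here, the containment is immediate. Proposition \ref{prop:RationalRepresentations2} says every irreducible representation of $C^*(u,v)$ is $n$-dimensional, so every boundary representation acts on an $n$-dimensional space. Thus any absolute matrix extreme point $\varphi$ is a UCP map $\cS \to \mat{n}$. Since absolute matrix extreme points are in particular matrix extreme (as noted following their definition), we get $\varphi \in \partial\UCP{\cS}{\mat{n}}$, which establishes ${\rm Abex}(\UCP{\cS}) \subset \partial\UCP{\cS}{\mat{n}}$.

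For non-emptiness, I would appeal to Theorem \ref{thm:pureDilatesToBoundary}: any pure UCP map on $\cS$ dilates to a boundary representation of $C^*(\cS)$. Pure states on $\cS$ exist by the usual Krein--Milman argument applied to the (nonempty, compact, convex) state space $\UCP{\cS}{\C}$, and by Theorem \ref{thm:matExtremeArePure} such states are pure. Dilating a pure state to a boundary representation $\pi$ and restricting to $\cS$ yields, by Theorem \ref{thm:absoluteIsBoundary}, an absolute matrix extreme point of $\UCP{\cS}$; by Proposition \ref{prop:RationalRepresentations2} it sits at level $n$.

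I do not foresee any real obstacle here: the statement is a direct synthesis of the classification of irreducible representations in Proposition \ref{prop:RationalRepresentations2} with the general dictionary between boundary representations and absolute matrix extreme points. The only small subtlety is sourcing a boundary representation; invoking Davidson--Kennedy (Theorem \ref{thm:pureDilatesToBoundary}) starting from any pure state handles this cleanly without needing to argue about $C^*_e(\cS)$ directly.
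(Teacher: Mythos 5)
Your proof is correct and follows essentially the same route as the paper: Theorem \ref{thm:absoluteIsBoundary} identifies ${\rm Abex}\qty(\UCP{\cS})$ with the restrictions of boundary representations, and Proposition \ref{prop:RationalRepresentations2} forces these to act on an $n$-dimensional space, giving the containment in $\partial\UCP{\cS}{\mat{n}}$. The only cosmetic difference is in sourcing non-emptiness: the paper cites the Davidson--Kennedy existence theorem for boundary representations directly, whereas you reconstruct one by dilating a pure state via Theorem \ref{thm:pureDilatesToBoundary}; both are valid and rest on the same underlying result.
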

\begin{proof}
First, we know by Theorem \ref{thm:absoluteIsBoundary} that ${\rm Abex}\qty(\UCP{\cS})$ are precisely the restrictions of boundary representations for $C^*(u,v)$ with respect to $\cS(u,v)$. Boundary representations are irreducible so in our case, Proposition \ref{prop:RationalRepresentations2} shows that the boundary representation are mappings into $\mat{n}$ so that ${\rm Abex}\qty(\UCP{\cS})\subset\UCP{\cS}{\mat{n}}$. By \cite[Theorem 3.4]{davidson2015choquet}, we know that any operator system admits a boundary representation, which also shows that ${\rm Abex}\qty(\UCP{\cS})$ is non-empty.
\end{proof}
Using this corollary and our knowledge of matrix extreme points, we can give a characterization of $\UCP{\cS}$ of an operator system generated by $q_{k,n}$-commuting unitaries.
\begin{thm}
\label{thm:RationalMatrixRange}
Let $\cS=\cS\qty(u,v)$ be an operator system such that $u,v$ are $q_{k,n}$-commuting unitaries. Then:
\begin{enumerate}
\item $\UCP{\cS}$ is $n$-Subhomogeneous.
\item $\partial\UCP{\cS}{\mat{n}}={\rm Abex}\qty(\UCP{\cS})$, and this is the set of all restrictions of boundary representations for $\cS$.
\item For all $1\leq k< n$, $\partial\UCP{\cS}{\mat{k}}$ is non-empty, and any $\varphi\in\partial\UCP{\cS}{\mat{k}}$ can be written in one of two forms:
\begin{itemize}
\item $\varphi=\gamma^*\psi\gamma$ for $\psi\in\partial\UCP{\cS}{\mat{k+1}}$.
\item $\varphi=\gamma^*\psi\gamma$ for $\psi\in{\rm Abex}\qty(\UCP{\cS})$.
\end{itemize}
\end{enumerate}
\end{thm}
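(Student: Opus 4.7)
The plan is to prove all three statements from the combination of the dimension rigidity in Proposition~\ref{prop:RationalRepresentations2}, the dilation-theoretic characterization of pure UCP maps (Theorems~\ref{thm:matExtremeArePure} and~\ref{thm:pureDilatesToBoundary}), and the level-by-level dichotomy of matrix extreme points in Corollary~\ref{cor:MatExtChar}. All three parts become essentially bookkeeping once these ingredients are assembled.

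For part~(1), I would argue that every matrix extreme point of $\UCP{\cS}$ is pure by Theorem~\ref{thm:matExtremeArePure}, hence dilates to a boundary representation $\pi$ of $C^*(u,v)$ by Theorem~\ref{thm:pureDilatesToBoundary}. Any boundary representation is irreducible and therefore $n$-dimensional by Proposition~\ref{prop:RationalRepresentations2}, so a matrix extreme point $\varphi$ at level $m$ appears as a compression $V^*\pi V$ of an $n$-dimensional representation, forcing $m\leq n$. Together with Corollary~\ref{cor:RationalAbex}, which produces an absolute matrix extreme point at level $n$, this shows $\UCP{\cS}$ is $n$-subhomogeneous. Part~(2) then follows almost immediately: the inclusion ${\rm Abex}(\UCP{\cS})\subseteq\partial\UCP{\cS}{\mat{n}}$ is automatic, and for $\varphi\in\partial\UCP{\cS}{\mat{n}}$ the same dilation argument yields $\varphi=V^*\pi V$ with $V:\mathbb{C}^n\to\mathbb{C}^n$ an isometry, hence a unitary; thus $\varphi$ is unitarily equivalent to $\left.\pi\right|_{\cS}$ and extends to a boundary representation, so by Theorem~\ref{thm:absoluteIsBoundary}, $\varphi\in{\rm Abex}(\UCP{\cS})$.

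For part~(3), parts~(1) and~(2) together imply that any matrix extreme point at level $k<n$ is necessarily non-absolute, so Corollary~\ref{cor:MatExtChar} provides a dilation $\psi\in\partial\UCP{\cS}{\mat{k+1}}$ with $\varphi=\gamma^*\psi\gamma$ --- this yields the first form directly. Iterating this step one level at a time, the chain terminates at level $n$, where the dilation must be absolute by part~(2); composing the intermediate compressions yields the second form. For non-emptiness at each level $1\leq k<n$, I would induct on $k$: the base case $k=1$ follows from classical Krein--Milman applied to the compact convex state space $\UCP{\cS}{\mat{1}}$ together with Theorem~\ref{thm:matExtremeArePure} (pure states are precisely the matrix extreme points at level $1$), and the inductive step again uses Corollary~\ref{cor:MatExtChar} in the dilation direction to lift a (non-absolute) matrix extreme point from level $k$ to level $k+1$.

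The only point requiring modest care is confirming that the iterated dilation in part~(3) genuinely terminates at level $n$ with an absolute matrix extreme point, which is precisely what parts~(1) and~(2) guarantee. Otherwise, with the heavy machinery developed in Sections~2--3 and the rigid classification of irreducible representations in Proposition~\ref{prop:RationalRepresentations2} already in place, the proof is largely assembly.
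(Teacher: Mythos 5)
Your proof is correct and follows essentially the same route as the paper: purity of matrix extreme points (Theorem~\ref{thm:matExtremeArePure}), dilation to boundary representations (Theorem~\ref{thm:pureDilatesToBoundary}), the $n$-dimensionality of irreducible representations (Proposition~\ref{prop:RationalRepresentations2}), and the one-level-up dilation dichotomy for non-absolute matrix extreme points, with Krein--Milman at level $1$ seeding the induction for non-emptiness. The only cosmetic difference is in part~(2), where you observe that the dilating isometry into an $n$-dimensional space is a unitary, whereas the paper instead invokes the emptiness of $\partial\UCP{\cS}{\mat{n+1}}$; both arguments are valid.
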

\begin{proof}
Let $\varphi$ be a matrix extreme point in $\UCP{\cS}$. By Theorem \ref{thm:matExtremeArePure}, we know that $\varphi\in\UCP{\cS}{\mat{k}}$ is a pure UCP map, so by Theorem \ref{thm:pureDilatesToBoundary}, it is of the form $\varphi=\gamma^*\psi\gamma$ for an isometry $\gamma$ and a restriction of a boundary representation $\psi=\left.\pi\right|_{\cS}$. Since $\pi$ is a boundary representation, it is of dimension $n$ by Proposition \ref{prop:RationalRepresentations2}, which means that $\psi\in\UCP{\cS}{\mat{n}}$. From this it follows that $k\leq n$. Proposition \ref{prop:RationalRepresentations2} also shows that ${\rm Abex}\qty(\UCP{\cS})\subset\partial\UCP{\cS}{\mat{n}}$ is non-empty, which proves that $\UCP{\cS}$ is $n$-Subhomogeneous. Lastly, if $\varphi\in\partial\UCP{\cS}{\mat{n}}$, it is either in ${\rm abex}\qty(\UCP{\cS})$ or it dilates to some matrix extreme point $\psi\in\partial\UCP{\cS}{\mat{n+1}}$ by \cite[Lemma 2.3]{davidson2015choquet}. Since $\partial\UCP{\cS}{\mat{n+1}}$ is empty, $\varphi$ has to be in ${\rm abex}\qty(\UCP{\cS})$, and this completes the proof of the first and second parts of the theorem. Lastly, note that $\UCP{\cS}{\mathbb{C}}$ is a compact and convex set, so by the Krein-Milman theorem, it has a classical extreme point, which has to be matrix extreme point in a trivial sense. By \cite[Lemma 2.3]{davidson2015choquet}, it dilates to a matrix extreme point in $\partial\UCP{\cS}{\mat{2}}$, which is not an absolute matrix extreme point unless $n=2$ (in which case we are done). When $n>2$, we can use the same lemma to dilate this point to other matrix extreme points in $\partial\UCP{\cS}{\mat{k}}$ for all $2\leq k\leq n$, and then at level $n$, the point will be an absolute matrix extreme point and will not dilate to another point at a higher level. This proves that all levels up to level $n$ are non-empty in $\partial\UCP{\cS}$, and that all points below level $n$ are compressions of matrix  extreme points from one level higher, which completes the proof.
\end{proof}
We now move to consider the case where the angle is an irrational multiple of $\pi$, that is, $q=e^{2\pi i\theta}$ with $\theta\in\qty[0,1]\setminus\mathbb{Q}$.
\begin{prop}
\label{prop:irrationalRepresentations2}
Let $u,v$ be a pair of $q$-commuting unitaries where $q$ is irrational, and let $\pi:C^*(u,v)\to B(\cH)$ be an irreducible representation. Then $\cH$ is infinite dimensional.
\end{prop}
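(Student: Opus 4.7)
The plan is to argue by contradiction: assume $\cH$ is finite dimensional and extract an algebraic obstruction from the commutation relation that forces $q$ to be a root of unity.

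Set $\tilde u=\pi(u)$ and $\tilde v=\pi(v)$, so that $\tilde u\tilde v=q\tilde v\tilde u$, and suppose $N=\dim\cH<\infty$. The first step is to rewrite the relation as a similarity $\tilde v^{-1}\tilde u\tilde v=q\tilde u$, which shows that $\tilde u$ and $q\tilde u$ are unitarily equivalent, hence have equal spectra. Since $\tilde u$ is a unitary on a finite dimensional space, $\sigma(\tilde u)$ is a nonempty finite subset of the unit circle. The equality $\sigma(\tilde u)=q\,\sigma(\tilde u)$ then forces multiplication by $q$ to permute this finite set, which would make $q$ a root of unity, contradicting the irrationality of $\theta$.

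Alternatively, and perhaps more cleanly, I would take determinants of the relation $\tilde u\tilde v\tilde u^{-1}=q\tilde v$. The left hand side has determinant $\det(\tilde v)$, while the right hand side has determinant $q^{N}\det(\tilde v)$. Since $\tilde v$ is unitary, $\det(\tilde v)\neq 0$, so we obtain $q^{N}=1$, again contradicting the fact that $q=e^{2\pi i\theta}$ with $\theta$ irrational.

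There is essentially no obstacle here, the proof is a short algebraic observation; the only thing to be careful about is that the argument uses in an essential way that $\cH$ is finite dimensional (so that determinants, or equivalently finite spectra, are available), which is exactly the hypothesis we are negating. Note the contrast with Proposition \ref{prop:RationalRepresentations2}: in the rational case $q^n=1$ is compatible with the relation and allows finite dimensional irreducible representations, while in the irrational case no such $n$ exists, so every irreducible representation must be infinite dimensional.
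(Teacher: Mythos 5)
Your proof is correct. The first variant is essentially the paper's argument in a different dress: the paper fixes an eigenvector $h$ of $\pi(u)$ with eigenvalue $\lambda$ and observes that $\pi(v)^k h$ is an eigenvector with eigenvalue $\lambda q^k$, producing infinitely many distinct eigenvalues; you instead note that conjugation by $\tilde v$ gives $\sigma(\tilde u)=q\,\sigma(\tilde u)$, so multiplication by $q$ permutes a nonempty finite subset of the circle and hence $q$ has finite order. Both are the same spectral obstruction. Your determinant argument is a genuinely different and slicker route: taking $\det$ of $\tilde u\tilde v\tilde u^{-1}=q\tilde v$ yields $q^{N}=1$ immediately, with no spectral theory at all. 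One small observation worth making (true of the paper's proof as well): irreducibility is never used --- the argument shows that for irrational $q$ there is \emph{no} nonzero finite dimensional representation of $C^*(u,v)$ whatsoever, since the images of $u$ and $v$ under any such representation would be $q$-commuting unitaries on a finite dimensional space. The determinant computation makes this especially transparent.
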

\begin{proof}
Assume by contradiction that $\cH$ is finite dimensional. In that case, $\pi(u)$ is a unitary acting on a finite dimensional vector space, which means that it has a non-zero eigenvector $h\in\cH$ for some non-zero eigenvalue $\lambda\in\mathbb{T}$. In that case, $\pi(v)^kh$ is a non-zero vector satisfying:
\[
	\pi(u)\pi(v)^kh=q^k\pi(v)^k\pi(u)h=\lambda q^k\pi(v)^kh
\]
which means that $\qty{\pi(v)^kh}_{k=0}^{\infty}$ is an infinite set of eigenvectors of distinct eigenvalues, and therefore, they are linearly independent which contradicts the fact that $\cH$ is finite dimensional.
\end{proof}
This provides us with enough information to describe the matrix state space of $\UCP{\cS}$ in the irrational case.
\begin{thm}
\label{thm:IrrationalMatrixRange}
Let $\cS=\cS(u,v)$ be an operator system such that $u,v$ are $q$-commuting unitaries for irrational $q$. Then:
\begin{enumerate}
\item $\UCP{\cS}$ is not finitely generated.
\item ${\rm abex}\qty(\UCP{\cS})=\emptyset$.
\item $\partial\UCP{\cS}{\mat{k}}$ is non-empty for all $k\in\mathbb{N}$, and any $\varphi\in\partial\UCP{\cS}{\mat{k}}$ can be written in the two forms:
\begin{itemize}
\item $\varphi=\gamma^*\psi\gamma$ for an isometry $\gamma$ and $\psi\in\partial\UCP{\cS}{\mat{k+1}}$.
\item $\varphi=V^*\left.\pi\right|_{\cS}V$ for an isometry $V$ and $\pi$ a boundary representation for $\cS$.
\end{itemize}
\end{enumerate}
\end{thm}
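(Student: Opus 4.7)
The plan is to deduce all three parts from the infinite-dimensionality of irreducible representations in Proposition \ref{prop:irrationalRepresentations2}, combined with the characterizations of matrix extreme and absolute matrix extreme points already developed in Section 2. The three parts are not independent: part (2) drives everything, and parts (1) and (3) follow as consequences once we know that no matrix extreme point can be absolute.

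First I would prove part (2). By Theorem \ref{thm:absoluteIsBoundary}, a point $\varphi \in {\rm Abex}(\UCP{\cS})$ extends to a boundary representation $\pi : C^*(u,v) \to B(\cK)$ for $\cS$, and $\varphi$ is the restriction of $\pi$ to $\cS$ acting on a finite-dimensional space. In particular $\cK$ must be finite-dimensional. But $\pi$ is irreducible by definition of a boundary representation, and Proposition \ref{prop:irrationalRepresentations2} says every irreducible representation of $C^*(u,v)$ is infinite-dimensional. This contradiction gives ${\rm Abex}(\UCP{\cS}) = \emptyset$.

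Next I would establish the two dilation forms in part (3). Let $\varphi \in \partial\UCP{\cS}{\mat{k}}$. By Theorem \ref{thm:matExtremeArePure} $\varphi$ is pure, and by Theorem \ref{thm:pureDilatesToBoundary} it dilates to a boundary representation of $C^*(\cS)$, so $\varphi = V^*\pi|_{\cS}V$ for some isometry $V$ and some boundary representation $\pi$, giving the second form. For the first form, since $\varphi$ is not absolute by part (2), Corollary \ref{cor:MatExtChar} immediately yields $\varphi = \gamma^*\psi\gamma$ for some $\psi \in \partial\UCP{\cS}{\mat{k+1}}$ (here $\gamma$ can be taken as an isometry after passing to the unitary conjugate provided by the corollary, since at the proper matrix convex combination only one non-trivial summand is present). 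To show every level is non-empty I would argue by induction: the classical state space $\UCP{\cS}{\mathbb{C}}$ is compact and convex, hence non-empty, and a classical extreme point of it is trivially matrix extreme at level $1$; given $\partial\UCP{\cS}{\mat{k}} \neq \emptyset$, the first form of part (3) (applicable since no point is absolute) produces an element of $\partial\UCP{\cS}{\mat{k+1}}$.

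Finally I would derive part (1) from the non-emptiness in part (3): since $\partial\UCP{\cS}{\mat{k}} \neq \emptyset$ for every $k$, the matrix convex set $\UCP{\cS}$ is not $N$-subhomogeneous for any $N$, so by Proposition \ref{prop:NgeneratedisMinimal} (applied to the matrix range, which is matrix affinely homeomorphic to $\UCP{\cS}$) it cannot coincide with $\cW^{N\text{-min}}$ of its $N$-th level for any $N$, i.e.\ it is not finitely generated. The main conceptual step is part (2); once one observes that the absolute matrix extreme points are exactly the restrictions of finite-dimensional boundary representations, the irrational-rotation obstruction of Proposition \ref{prop:irrationalRepresentations2} does all the work, and the remaining statements are essentially a bookkeeping exercise in the dilation theory of Section 2.
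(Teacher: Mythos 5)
Your proposal is correct and follows essentially the same route as the paper: part (2) from Theorem \ref{thm:absoluteIsBoundary} together with Proposition \ref{prop:irrationalRepresentations2}, the two dilation forms from Theorems \ref{thm:matExtremeArePure} and \ref{thm:pureDilatesToBoundary} together with Corollary \ref{cor:MatExtChar}, and non-emptiness of every level by starting from a Krein--Milman extreme point of $\UCP{\cS}{\mathbb{C}}$ and dilating upward. The only cosmetic difference is that you prove (2) first and let it drive the induction via Corollary \ref{cor:MatExtChar}, whereas the paper argues directly that a pure finite-dimensional matrix state cannot be maximal and invokes the Davidson--Kennedy dilation lemma at each step; the content is identical.
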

\begin{proof}
$\UCP{\cS}{\mathbb{C}}$ is a compact convex set in a locally convex topological vector space. By the Krein-Milman theorem, there is a classical extreme point in this set, which has to be a matrix extreme point, so $\partial\UCP{\cS}{\mathbb{C}}$ is non-empty. For such a matrix extreme point $\varphi$, we know that it is a pure point by Theorem \ref{thm:matExtremeArePure}, but it is not maximal, because then it would be a restriction of a boundary representation by \cite[Lemma 2.1]{davidson2015choquet} which contradicts Proposition \ref{prop:irrationalRepresentations2}. We may now use \cite[Lemma 2.3]{davidson2015choquet}, and dilate $\varphi$ to a matrix extreme point at level $2$, so that $\partial\UCP{\cS}{\mat{2}}\neq\emptyset$. But we can now continue by repeating this process and dilate a matrix extreme point in $\partial\UCP{\cS}{\mat{n}}$ to a matrix extreme point in $\partial\UCP{\cS}{\mat{n+1}}$, because this process will reach a boundary representation in a finite amount of steps of this form. This shows that there are matrix extreme points at every level of the set, which proves $(1)$ and proves the first part of $(3)$. $(2)$ is an immediate consequence of Proposition \ref{prop:irrationalRepresentations2}, and Theorem \ref{thm:absoluteIsBoundary}. Finally, the second part of $(3)$ is the work of Davidson and Kennedy (see \cite[Theorem 2.4]{davidson2015choquet}).
\end{proof}
By using this characterization, we show that rational $q_{k,n}$-commuting unitaries can be detected through their $n$-order structure.
\begin{thm}
\label{thm:qSeperation}
Let $u,v$ be a pair of $q_{k,n}$-commuting unitaries, and let $\tilde{u},\tilde{v}$ be a pair of $q$-commuting unitaries ($q$ might be irrational). Assume that the mapping $\tilde{u}\mapsto u,\tilde{v}\mapsto v$ defines a unital and $n$-positive map. Then, $q=q_{k,n}$, and this map is completely positive. If this mapping is an $n$-order isomorphism, it extends to a $*$-isomorphism of $C^*(u,v)$ and $C^*(\tilde{u},\tilde{v})$.
\end{thm}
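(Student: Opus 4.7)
The plan is to assemble the result directly from the subhomogeneity machinery developed earlier in the paper. By Theorem \ref{thm:RationalMatrixRange}, the operator system $\cS(u,v)$ generated by the pair of $q_{k,n}$-commuting unitaries is $n$-subhomogeneous. Since the given map $\phi:\cS(\tilde u,\tilde v)\to\cS(u,v)$, sending $\tilde u\mapsto u$ and $\tilde v\mapsto v$, is unital and $n$-positive into an $n$-subhomogeneous operator system, Theorem \ref{thm:NgeneratedChoiProperty} immediately upgrades $\phi$ to a UCP map. This is the conceptual heart of the statement; once we have complete positivity, the rest is a short verification.

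Next I would invoke Theorem \ref{thm:UnitariesUEP}, which applies because the UCP map $\phi$ sends unitaries to unitaries, to obtain a (unique) $*$-homomorphism $\Phi:C^*(\tilde u,\tilde v)\to C^*(u,v)$ extending $\phi$, with $\Phi(\tilde u)=u$ and $\Phi(\tilde v)=v$. Applying $\Phi$ to the relation $\tilde u\tilde v=q\tilde v\tilde u$ and using multiplicativity yields $uv=q\,vu$. Comparing with $uv=q_{k,n}vu$ and cancelling the unitary $vu$ forces $q=q_{k,n}$.

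For the last assertion, suppose $\phi$ is an $n$-order isomorphism. Because we have now proved $q=q_{k,n}$, the tuple $(\tilde u,\tilde v)$ is also a pair of $q_{k,n}$-commuting unitaries, so $\cS(\tilde u,\tilde v)$ is $n$-subhomogeneous by Theorem \ref{thm:RationalMatrixRange} as well. Applying Theorem \ref{thm:NgeneratedChoiProperty} a second time, this time to $\phi^{-1}:\cS(u,v)\to\cS(\tilde u,\tilde v)$, which is unital and $n$-positive into an $n$-subhomogeneous target, shows $\phi^{-1}$ is UCP. Hence $\phi$ is a complete order isomorphism, and Corollary \ref{cor:COIForUnitariesIsIsomorphism} promotes it to a $*$-isomorphism of $C^*(\tilde u,\tilde v)$ and $C^*(u,v)$.

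There is no serious obstacle; the only subtlety is the logical ordering. One cannot symmetrically apply the subhomogeneity argument to $\phi^{-1}$ at the outset, because $\cS(\tilde u,\tilde v)$ is not known to be subhomogeneous until after we have determined $q$. Thus the $q$-separation step must precede the symmetric argument used to obtain the $*$-isomorphism.
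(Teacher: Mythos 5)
Your argument is correct and follows essentially the same route as the paper: establish $n$-subhomogeneity of $\cS(u,v)$ via Theorem \ref{thm:RationalMatrixRange}, upgrade the $n$-positive map to UCP via Theorem \ref{thm:NgeneratedChoiProperty}, extend to a $*$-homomorphism via Theorem \ref{thm:UnitariesUEP}, and read off $q=q_{k,n}$ from multiplicativity. Your explicit treatment of $\phi^{-1}$ in the final step (and the remark on why the $q$-separation must come first) fills in a detail the paper leaves implicit, but it is the same proof.
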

\begin{proof}
First note that by Theorem \ref{thm:RationalMatrixRange}, $\UCP{\cS(u,v)}$ is $n$-generated. By Theorem \ref{thm:NgeneratedChoiProperty}, the unital and $n$-positive map of $\cS(\tilde{u},\tilde{v})$ to $\cS(u,v)$ which is defined by $\phi(\tilde{u})=u,\phi(\tilde{v})=v$ is completely positive. Therefore, we invoke Theorem \ref{thm:UnitariesUEP} and get that $\phi$ has the unique extension property, and thus extends to a $*$-homomorphism of $C^*(\tilde{u},\tilde{v})$ to $C^*(u,v)$, which we denote as $\pi$. We now have that $uv=q_{k,n}vu$, but also:
\[
	uv=\pi(\tilde{u}\tilde{v})=q\pi\qty(\tilde{v}\tilde{u})=qvu,
\]
from which it follows that $q=q_{k,n}$. In the case where $\phi$ defines an $n$-order isomorphism of the corresponding operator systems, we also have an extension of $\phi$ to a $*$-isomorphism of the generated $C^*$-algebras.
\end{proof}
\subsection{$\Lambda$-commuting $d$-tuples}
Theorem \ref{thm:RationalMatrixRange} gives us a complete description of the boundary representations of $q$-commuting unitaries. But the proof relies, in part, on the fact we are working with pairs of $q$-commuting unitaries. Characterizing the boundary representations for more general $\Lambda$-commuting unitaries is harder. Even the dimensions of those representations are unknown in general (although there are some results, such as \cite[Proposition B.1]{gerhold2021dilations}). But when $\Lambda=\qty(\lambda_{ij})_{i,j=1}^d$ is a self-adjoint complex matrix with $\qty|\lambda_{ij}|=1$ and $\lambda_{ij}=e^{2\pi i\frac{k_{ij}}{n_{ij}}}$ are rational, we can still give some bound on the dimension of the irreducible representations.
\begin{prop}
\label{prop:RationalLambdaReps}
Let $\bs$ be a $d$-tuple of $\Lambda$-commuting unitaries for some rational $\Lambda$. Let $N={\rm lcm}\qty{n_{ij}}$. If $\pi:C^*(\bs)\to B(\cH)$ is irreducible, then $\cH$ is finite dimensional, and ${\rm dim}\,\cH\leq N^d$.
\end{prop}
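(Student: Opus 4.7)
The plan is to generalize the argument used in Proposition \ref{prop:RationalRepresentations2}: first show that $\pi(s_i)^N$ lies in the center of $\pi(C^*(\bs))$ for each $i$, then use Schur's lemma to conclude these are scalars, and finally extract a cyclic spanning set of size at most $N^d$.

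First I would verify, by induction on $k$, the identity $s_i^k s_j = \lambda_{ij}^k s_j s_i^k$ which follows directly from $s_i s_j = \lambda_{ij} s_j s_i$. Since $n_{ij} \mid N$ for all $i,j$, we have $\lambda_{ij}^N = 1$, and therefore $s_i^N s_j = s_j s_i^N$ for every $j$. Applying $\pi$, the element $\pi(s_i)^N$ commutes with every generator $\pi(s_j)$, hence lies in the commutant $\pi(C^*(\bs))'$. By irreducibility and Schur's lemma, $\pi(s_i)^N = \mu_i \, \mathrm{Id}_{\cH}$ for some $\mu_i \in \mathbb{T}$ (since $\pi(s_i)$ is unitary).

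The second step is to show that the subspace
\[
	\cK = \operatorname{span}\bigl\{\pi(s_1)^{k_1} \cdots \pi(s_d)^{k_d} h \;\big|\; 0 \leq k_i \leq N-1\bigr\}
\]
is $\pi(C^*(\bs))$-invariant, for a fixed non-zero $h \in \cH$. Here I would use two facts: (a) from $\pi(s_i)^N = \mu_i \mathrm{Id}_{\cH}$ it follows that $\pi(s_i)^* = \overline{\mu_i}\pi(s_i)^{N-1}$, so the $*$-algebra generated by $\pi(\bs)$ coincides with the unital algebra generated by $\pi(\bs)$; (b) using $\pi(s_i)\pi(s_j) = \lambda_{ij}\pi(s_j)\pi(s_i)$, any monomial in $\pi(s_1),\dots,\pi(s_d)$ can be rewritten as a scalar multiple of an ordered monomial $\pi(s_1)^{\ell_1}\cdots\pi(s_d)^{\ell_d}$, and then modulo the scalar identities $\pi(s_i)^N = \mu_i\mathrm{Id}_{\cH}$ we may assume $0 \leq \ell_i \leq N-1$. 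Thus left-multiplication by any element of $\pi(C^*(\bs))$ preserves $\cK$, so $\cK$ is invariant. By irreducibility, $\cK = \cH$, and since $\cK$ is spanned by at most $N^d$ vectors, $\dim \cH \leq N^d$.

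The only mildly delicate step is the normal-form argument: one must be careful that the commutation relations really do allow every monomial to be pushed into the ordered form with a scalar factor, which boils down to repeatedly applying $\pi(s_i)\pi(s_j) = \lambda_{ij}\pi(s_j)\pi(s_i)$ to swap adjacent letters until the word is sorted. This is entirely analogous to the bookkeeping in the proof of Proposition \ref{prop:RationalRepresentations2}; the only new feature is the presence of $d$ generators rather than two, and the role of $n$ there is played here by the least common multiple $N$.
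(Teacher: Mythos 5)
Your proposal is correct and follows essentially the same route as the paper's proof: show $\pi(s_i)^N$ lies in the commutant using $\lambda_{ij}^N=1$, invoke irreducibility to make it a scalar, and then span $\cH$ by the ordered monomials $\pi(s_1)^{k_1}\cdots\pi(s_d)^{k_d}h$ with $0\leq k_i\leq N-1$. Your treatment of the invariance of $\cK$ (handling adjoints via $\pi(s_i)^*=\overline{\mu_i}\pi(s_i)^{N-1}$ and the normal-form reordering) is more explicit than the paper's, which leaves these steps implicit.
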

\begin{proof}
Similar to the $q$-commuting case, we set $U_i=\pi\qty(s_i)$, and note that for all $r_1,\dots,r_d\in\mathbb{Z}$:
\[
	U_i^N\prod\limits_{j=1}^dU_j^{r_j}=\qty(\prod\limits_{\substack{j=1 \\ j\neq i}}^d\lambda_{ij}^{Nr_j})\qty(\prod\limits_{j=1}^dU_j^{r_j})U_i^N=\qty(\prod\limits_{j=1}^dU_j^{r_j})U_i^N
\]
which means that $U_i^N\in\pi\qty(C^*(\bs))'$ for $i=1,\dots,d$. By irreducibility, there exists some scalars $\eta_1,\dots,\eta_d$ of modulus $1$, such that $U_i^N=\eta_i{\rm Id}_{\mathcal{H}}$ for $i=1,\dots,d$. Picking some non-zero vector $\xi\in\cH$, we have that:
\[
	\cK={\rm span}\qty{\prod\limits_{j=1}^dU_j^{r_j}\xi\middle|r_j\in\qty{0,\dots,N-1}, j=1,\dots,d},
\]
from which irreducibility implies $\cK=\cH$, and $\cH$ is finite dimensional. Since the space is spanned by at most $N^d$ vectors, we also have the bound on the dimension.
\end{proof}
Note that the example in \cite[Proposition B.1]{gerhold2021dilations} shows a $3$-tuple of unitaries which are $\Lambda$-commuting for $\lambda_{i,i+1}=e^{2\pi i\frac{m}{n}}$, but are irreducibly represented on an $n$-dimensional Hilbert space, a dimension smaller then the estimate, $n^3$. Although we do not have an exact knowledge of the dimensions of irreducible representations of such tuples, we still have a bound. We can thus show that they are all Subhomogeneous.
\begin{prop}
\label{prop:RationalLambdaUCP}
Let $\Lambda=\qty(\lambda_{ij})_{i,j=1}^d$ be rational and set $\bs^{\Lambda}$ to be the universal $d$-tuple of $\Lambda$-commuting unitaries. Then, $\UCP{\cS\qty(\bs^{\Lambda})}$ is $N(\Lambda)$-Subhomogeneous for some $N(\Lambda)\leq N^d$, and whenever $\br$ is a $d$-tuple of $\Lambda$-commuting unitaries, then $\UCP{\cS\qty(\br)}$ is $n$-Subhomogeneous for some $n\leq N(\Lambda)$.
\end{prop}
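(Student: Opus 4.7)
The plan is to treat the two claims separately. For the universal tuple $\bs^{\Lambda}$, Proposition~\ref{prop:RationalLambdaReps} tells us that every irreducible $*$-representation of $C^*(\bs^{\Lambda})$ is finite dimensional with dimension at most $N^d$, and every boundary representation of $\cS(\bs^{\Lambda})$ is in particular irreducible. Consequently, any matrix extreme point $\varphi\in\partial\UCP{\cS(\bs^{\Lambda})}{\mat{k}}$ is pure by Theorem~\ref{thm:matExtremeArePure} and therefore, by Theorem~\ref{thm:pureDilatesToBoundary}, a compression of a boundary representation; this forces $k\le N^d$. Thus $\partial\UCP{\cS(\bs^{\Lambda})}{\mat{k}}=\emptyset$ for $k>N^d$, and defining $N(\Lambda)$ to be the largest level at which $\partial\UCP{\cS(\bs^{\Lambda})}$ is non-empty (finite since boundary representations exist) yields the first claim.

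Now fix an arbitrary $\Lambda$-commuting $d$-tuple $\br$. By the universal property of $\bs^{\Lambda}$ there is a surjective $*$-homomorphism $\pi:C^*(\bs^{\Lambda})\to C^*(\br)$ with $\pi(s_i^{\Lambda})=r_i$. Given $\varphi\in\partial\UCP{\cS(\br)}{\mat{k}}$, Theorems~\ref{thm:matExtremeArePure} and~\ref{thm:pureDilatesToBoundary} provide a boundary representation $\rho:C^*(\br)\to B(\cH)$ of $\cS(\br)$ and an isometry $V:\mathbb{C}^k\to\cH$ such that $\varphi(s)=V^*\rho(s)V$ for $s\in\cS(\br)$, in particular $k\le\dim\cH$. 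The crucial step is to show that the pullback $\rho\circ\pi:C^*(\bs^{\Lambda})\to B(\cH)$ is itself a boundary representation of $\cS(\bs^{\Lambda})$. Irreducibility of $\rho\circ\pi$ is automatic from the surjectivity of $\pi$ and the irreducibility of $\rho$. For the unique extension property, note that the restriction $(\rho\circ\pi)|_{\cS(\bs^{\Lambda})}$ is a UCP map $\cS(\bs^{\Lambda})\to\cS(\rho(r_1),\dots,\rho(r_d))\subset B(\cH)$ sending the unitary generators $s_i^{\Lambda}$ to the unitaries $\rho(r_i)$; Theorem~\ref{thm:UnitariesUEP} therefore endows it with UEP, and since the $*$-homomorphism $\rho\circ\pi$ is a UCP extension, it must be the unique one. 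Hence $\rho\circ\pi$ is a boundary representation of $\cS(\bs^{\Lambda})$, so the first part gives $\dim\cH\le N(\Lambda)$, whence $k\le N(\Lambda)$, which proves that $\UCP{\cS(\br)}$ is $n$-Subhomogeneous for some $n\le N(\Lambda)$.

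The main subtlety in the argument is the application of Theorem~\ref{thm:UnitariesUEP}: one needs uniqueness of the UCP extension into the whole ambient $B(\cH)$ and not merely into $C^*(\rho(\br))$. This is precisely what the proof of Theorem~\ref{thm:UnitariesUEP} delivers, since it constructs the extension via Arveson's extension theorem directly into $B(\cH)$ and only afterwards establishes multiplicativity — so the legitimacy of pulling back a boundary representation of $\cS(\br)$ to one of $\cS(\bs^{\Lambda})$ hinges on this detail being read correctly from the proof of that theorem.
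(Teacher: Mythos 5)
Your proof is correct and follows essentially the same route as the paper: bound the levels of matrix extreme points by realizing them as compressions of boundary representations (via Theorems \ref{thm:matExtremeArePure} and \ref{thm:pureDilatesToBoundary}), and then pull back boundary representations of $\cS(\br)$ along the universal $*$-homomorphism to bound their dimension by $N(\Lambda)$. The one place you go beyond the paper is in justifying, via Theorem \ref{thm:UnitariesUEP}, why the composition of a boundary representation of $\cS(\br)$ with the universal surjection is again a boundary representation of $\cS(\bs^{\Lambda})$ --- a step the paper asserts without argument --- and your reading of that theorem (uniqueness of the UCP extension into all of $B(\cH)$, not merely into the generated $C^*$-algebra) is the correct one.
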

\begin{proof}
First, note that by Proposition \ref{prop:RationalLambdaReps}, all boundary representations of $\cS(\bs^{\Lambda})$ are of dimension lower or equal to $N^d$. Let $N(\Lambda)$ be the largest integer for which there exits a boundary representation on an $N(\Lambda)$ dimensional Hilbert space. By combining Theorem \ref{thm:absoluteIsBoundary}, Theorem \ref{thm:matExtremeArePure}  and \cite[Theorem 2.4]{davidson2015choquet}, we get that all absolute matrix extreme points are in $\UCP{\cS(\bs^{\Lambda})}{k}$ with $k\leq N(\Lambda)$, and that any other matrix extreme points are compressions of such points, which means that all extreme points lie in $\UCP{\cS(\bs^{\Lambda})}{\mat{k}}$ for $k\leq N(\Lambda)$. Because there is an absolute matrix extreme point in $\UCP{\cS(\bs^{\Lambda})}{\mat{N(\Lambda)}}$, we have that $\UCP{\cS\qty(\bs^{\Lambda})}$ is $N(\Lambda)$-generated. Next, let $\br$ be a $d$-tuple of $\Lambda$-commuting unitaries. By the universal property of $\cS(\bs^{\Lambda})$, there exists a $*$-homomorphism $\rho:C^*(\bs^{\Lambda})\to C^*(\br)$ such that $\rho(s^{\Lambda}_i)=r_i$. Therefore, if $\varphi\in{\rm abex}\qty(\UCP{\cS(\br)})$, it is a restriction of a boundary representation $\pi:C^*(\br)\to B(\cH)$, which means that $\pi\circ\rho:C^*(\bs^{\Lambda})\to B(\cH)$ is a boundary representation, and thus an absolute matrix extreme point in $\UCP{\cS(\bs^{\Lambda})}$. Therefore, ${\rm dim}\,\cH\leq N(\Lambda)$. This means that the largest integer $n$ for which a boundary representation for $\cS(\br)$ exists, must satisfy $n\leq N(\Lambda)$, we get that $\UCP{\cS(\br)}$ is $n$-generated for some $n\leq N(\Lambda)$.
\end{proof}
We finish this part by extending Theorem \ref{thm:qSeperation} and Theorem \ref{thm:NordForRanges} for the case of $\Lambda$-commuting unitaries.
\begin{thm}
\label{thm:LambdaNposIsUCP}
Let $\Lambda$ be rational and let $\bs$ be a $d$-tuple of $\Lambda$-commuting unitaries. If $\br$ is a $d$-tuple of $\tilde{\Lambda}$-commuting unitaries. If the mapping $r_i\mapsto s_i$ extends to a unital $N(\Lambda)$-positive map $\phi:\cS(\br)\to\cS(\bs)$, then $\Lambda=\tilde{\Lambda}$. In particular, if the mapping is an $N(\Lambda)$-order isomorphism, the tuples are completely order equivalent and generate isomorphic $C^*$-algebras.
\end{thm}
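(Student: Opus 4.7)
The plan is to follow the same blueprint used for Theorem \ref{thm:qSeperation}, replacing the appeal to Theorem \ref{thm:RationalMatrixRange} with its $\Lambda$-commuting analogue, Proposition \ref{prop:RationalLambdaUCP}. Explicitly, the proof proceeds in four steps: upgrade $\phi$ from $N(\Lambda)$-positive to completely positive, promote it to a $*$-homomorphism using the unitary rigidity, read off the equality $\Lambda = \tilde{\Lambda}$ from the commutation relations, and finally invoke the equivalence theorem for unitary tuples.

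First I would apply Proposition \ref{prop:RationalLambdaUCP} to the target: since $\bs$ is a $d$-tuple of $\Lambda$-commuting unitaries and $\Lambda$ is rational, $\UCP{\cS(\bs)}$ is $n$-subhomogeneous for some $n \leq N(\Lambda)$. Consequently $\cS(\bs)$ fits the hypothesis of Theorem \ref{thm:NgeneratedChoiProperty}, so the unital $N(\Lambda)$-positive map $\phi:\cS(\br)\to\cS(\bs)$ sending $r_i\mapsto s_i$ is in fact completely positive. Because $\phi$ is a UCP map between operator systems generated by unitary tuples and maps unitaries to unitaries, Theorem \ref{thm:UnitariesUEP} applies and yields a $*$-homomorphism $\pi:C^*(\br)\to C^*(\bs)$ extending $\phi$, with $\pi(r_i)=s_i$ for every $i$.

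Next I would compare the commutation relations under $\pi$. For every pair $i,j$, multiplicativity of $\pi$ gives
\[
s_i s_j \;=\; \pi(r_i)\pi(r_j) \;=\; \pi(r_i r_j) \;=\; \pi(\tilde{\lambda}_{ij} r_j r_i) \;=\; \tilde{\lambda}_{ij}\, s_j s_i,
\]
while the hypothesis on $\bs$ gives $s_i s_j = \lambda_{ij}\, s_j s_i$. Since $s_j s_i$ is a unitary, and therefore nonzero, we obtain $\lambda_{ij} = \tilde{\lambda}_{ij}$ for all $i,j$, i.e.\ $\Lambda = \tilde{\Lambda}$.

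Finally, for the second assertion, assume $\phi$ is an $N(\Lambda)$-order isomorphism. Now that $\br$ is also a $d$-tuple of $\Lambda$-commuting unitaries, Proposition \ref{prop:RationalLambdaUCP} applied to $\br$ tells us that $\UCP{\cS(\br)}$ is $m$-subhomogeneous for some $m \leq N(\Lambda)$ as well. Both operator systems thus satisfy the hypothesis of Theorem \ref{thm:equivalenceOfUnitaries} at level $N(\Lambda)$, which promotes the $N(\Lambda)$-order isomorphism to a complete order isomorphism and, via Corollary \ref{cor:COIForUnitariesIsIsomorphism}, to a $*$-isomorphism of $C^*(\bs)$ and $C^*(\br)$ mapping $s_i$ to $r_i$. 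There is no real obstacle here, since the hard work was done in establishing Proposition \ref{prop:RationalLambdaUCP}; the only point requiring attention is to make sure that we have the subhomogeneity of \emph{both} sides before invoking Theorem \ref{thm:equivalenceOfUnitaries}, which is why the identification $\Lambda=\tilde{\Lambda}$ must precede that final step.
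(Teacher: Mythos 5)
Your proposal is correct and follows essentially the same route as the paper: apply Proposition \ref{prop:RationalLambdaUCP} to get subhomogeneity of $\cS(\bs)$, upgrade $\phi$ to a UCP map via Theorem \ref{thm:NgeneratedChoiProperty}, extend to a $*$-homomorphism via Theorem \ref{thm:UnitariesUEP}, and read off $\Lambda=\tilde{\Lambda}$ from the commutation relations. Your explicit writing-out of the commutation computation (which the paper defers to ``similar calculations as the $d=2$ case'') and your remark that $\Lambda=\tilde{\Lambda}$ must be established before invoking subhomogeneity of the domain are welcome clarifications, but the argument is the same.
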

\begin{proof}
Under the assumptions, we have that $\UCP{\cS(\bs)}$ is $n$-generated for some $n\leq N(\Lambda)$. By Theorem \ref{thm:NgeneratedChoiProperty}, we have that the unital $N(\Lambda)$-positive map which maps $r_i$ to $s_i$ for $i=1,\dots,d$, is completely positive. Therefore, we use Theorem \ref{thm:UnitariesUEP} to conclude that this mapping has the UEP, and thus extends to a $*$-homomorphism of the generated $C^*$-algebras. Since this map is multiplicative, we use similar calculations as the $d=2$ case, to get that $\Lambda=\tilde{\Lambda}$. If the mapping defines an $N(\Lambda)$-order isomorphism, we now have an inverse which is $N(\Lambda)$-positive, and get that this map extends to a $*$-isomorphism of the $C^*$-algebras.
\end{proof}
And finally:
\begin{thm}
\label{thm:LambdaEquivalence}
Let $\Lambda$ be rational and $\bs$ be a $d$-tuple of $\Lambda$-commuting unitaries. For a $d$-tuple $\br$ of $\tilde{\Lambda}$-commuting unitaries, the following are equivalent:
\begin{enumerate}
\item $\bs$ and $\br$ are $N(\Lambda)$-order equivalent.
\item $\bs$ and $\br$ are completely order equivalent.
\item $\bs$ dilates to $\br$ and $\br$ dilates to $\bs$.
\item $\W{N(\Lambda)}{\bs}=\W{N(\Lambda)}{\br}$.
\item $\W{\bs}=\W{\br}$.
\item $\bs$ and $\br$ are $*$-isomorphic.
\end{enumerate}
\end{thm}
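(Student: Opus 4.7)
The overall strategy mirrors Theorem \ref{thm:equivalenceOfUnitaries}, but with the extra complication that $\br$ is \emph{a priori} $\tilde{\Lambda}$-commuting rather than $\Lambda$-commuting, so we do not yet know that $\cS(\br)$ is subhomogeneous. The plan is to run the cycle $(2)\Rightarrow(1)\Rightarrow(4)\Rightarrow(5)\Rightarrow(2)$, treating $(4)\Rightarrow(5)$ as the main work, and handle $(2)\Leftrightarrow(6)$ and $(3)\Leftrightarrow(6)$ separately exactly as in the proof of Theorem \ref{thm:equivalenceOfUnitaries}.

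The easy implications I will dispatch quickly: $(2)\Rightarrow(1)$ is trivial; $(1)\Rightarrow(4)$ is immediate from Theorem \ref{thm:DDSS}(1) applied in both directions to the $N(\Lambda)$-order isomorphism and its inverse; $(5)\Rightarrow(2)$ follows from Theorem \ref{thm:DDSS}(3) (a complete isometry between operator systems generated by unitary tuples sending $s_i\mapsto r_i$ is the required complete order equivalence); $(2)\Leftrightarrow(6)$ is Corollary \ref{cor:COIForUnitariesIsIsomorphism}; and $(3)\Leftrightarrow(6)$ is the dilation-theoretic observation preceding Theorem \ref{thm:equivalenceOfUnitaries}, combined with the fact from Theorem \ref{thm:UnitariesUEP} that any UCP map between operator systems generated by unitary tuples sending unitaries to unitaries extends to a $*$-homomorphism.

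The main step is $(4)\Rightarrow(5)$. By Proposition \ref{prop:RationalLambdaUCP}, $\UCP{\cS(\bs)}$ is $n_0$-subhomogeneous for some $n_0\leq N(\Lambda)$, so by Proposition \ref{prop:NgeneratedisMinimal} we have $\W{\bs}=\cW^{N(\Lambda)\mbox{-min}}\qty(\W{N(\Lambda)}{\bs})$. The hypothesis (4) lets me replace the level-$N(\Lambda)$ data by that of $\br$, yielding
\[
    \W{\bs}=\cW^{N(\Lambda)\mbox{-min}}\qty(\W{N(\Lambda)}{\br})\subset\W{\br}.
\]
By Theorem \ref{thm:DDSS}(2) this inclusion is realized by a UCP map $\phi:\cS(\bs)\to\cS(\br)$ with $\phi(s_i)=r_i$. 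By Theorem \ref{thm:UnitariesUEP}, $\phi$ has the unique extension property and extends to a $*$-homomorphism $\pi:C^*(\bs)\to C^*(\br)$ with $\pi(s_i)=r_i$. Applying $\pi$ to $s_is_j=\lambda_{ij}s_js_i$ gives $r_ir_j=\lambda_{ij}r_jr_i$; comparing with $r_ir_j=\tilde{\lambda}_{ij}r_jr_i$ and using that $r_jr_i$ is a unitary (hence non-zero) forces $\tilde{\lambda}_{ij}=\lambda_{ij}$ for all $i,j$, so $\tilde{\Lambda}=\Lambda$.

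Once $\tilde{\Lambda}=\Lambda$ is established, Proposition \ref{prop:RationalLambdaUCP} applies to $\br$ as well, giving that $\W{\br}$ is $n_1$-subhomogeneous for some $n_1\leq N(\Lambda)$. Re-running the same argument with the roles of $\bs$ and $\br$ swapped (which is legitimate because (4) is symmetric in $\bs,\br$) produces the reverse inclusion $\W{\br}\subset\W{\bs}$, yielding (5) and closing the cycle. The only conceptual obstacle is the one circumvented in the second paragraph above: we cannot invoke Theorem \ref{thm:LambdaNposIsUCP} directly on $\br$ at the outset, because its subhomogeneity is not given, and this asymmetry is resolved by first producing a one-way UCP map from the $N(\Lambda)$-minimality of $\W{\bs}$, then using the UEP of unitary tuples to extract $\tilde{\Lambda}=\Lambda$ before completing the argument symmetrically.
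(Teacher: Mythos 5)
Your proposal is correct and follows essentially the same route as the paper: the heart of the matter is $(4)\Rightarrow(5)$, handled by noting that $\W{\bs}$ is $N(\Lambda)$-minimal (Propositions \ref{prop:RationalLambdaUCP} and \ref{prop:NgeneratedisMinimal}), so $\W{\bs}\subset\W{\br}$; one then extracts a UCP map between the generated operator systems, promotes it to a $*$-homomorphism via the unique extension property for unitary tuples, reads off $\Lambda=\tilde{\Lambda}$ from multiplicativity, and only then symmetrizes -- exactly the paper's argument, with the remaining equivalences dispatched by the same citations. One small slip: Theorem \ref{thm:DDSS}(2) converts the inclusion $\W{\bs}\subset\W{\br}$ into a UCP map $\cS(\br)\to\cS(\bs)$ sending $r_i\mapsto s_i$ (matrix ranges shrink under UCP maps), not a map $\cS(\bs)\to\cS(\br)$ sending $s_i\mapsto r_i$ as you wrote; this is harmless here, since applying the resulting $*$-homomorphism to $r_ir_j=\tilde{\lambda}_{ij}r_jr_i$ yields $s_is_j=\tilde{\lambda}_{ij}s_js_i$ and hence $\tilde{\Lambda}=\Lambda$ all the same, after which your symmetric second pass goes through unchanged.
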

\begin{proof}
$\qty(1)\Longleftrightarrow\qty(2)$ follows Theorem \ref{thm:LambdaNposIsUCP}. $\qty(2)\Longleftrightarrow\qty(3)$ follows the discussion following Corollary \ref{cor:COIForUnitariesIsIsomorphism}. $\qty(2)\Longleftrightarrow\qty(5)$. For $\qty(5)\Longrightarrow\qty(4)$, note that we only have to prove $\qty(4)\Longrightarrow\qty(5)$. Because $\W{\bs}$ is $n$-generated for some $n\leq N(\Lambda)$, by Proposition \ref{prop:RationalLambdaUCP}. Therefore, we invoke Proposition \ref{prop:NgeneratedisMinimal} and conclude that $\W{\bs}$ is $N(\Lambda)$-minimal, which means that $\W{\bs}\subset \W{\br}$ and by Theorem \ref{thm:DDSS}, there exists a UCP map of the corresponding operator systems, mapping $r_i$ to $s_i$. By Theorem \ref{thm:LambdaNposIsUCP},  this map extends to a $*$-homomorphism of the generated $C^*$-algebras, from which it follows that $\Lambda=\tilde{\Lambda}$. Now, we can repeat those arguments for $\W{\br}$, and get $\qty(5)$. Finally, $\qty(2)\Longleftrightarrow\qty(6)$ also follows Theorem \ref{thm:LambdaNposIsUCP}.
\end{proof}
\section{Classification up to unital isometries}
In this section we try to investigate whether the conditions of Theorem \ref{thm:qSeperation} are strict. Namely, does there exist a pair of $q_{k,n}$ commuting unitaries which is $m$-order equivalent (for $m<n$) to some $q$-commuting pair, but the two pairs are not completely order equivalent?\par 
More specifically, we move to consider unital isometries. Because unital isometries are positive, they are an example of $1$-order isomorphism (see \cite[Proposition 2.11]{paulsen}). The following theorem provides a relation between the existence of unital isometries to certain dilation properties of $q$-commuting unitaries.
\begin{thm}
\cite[Theorem 6.4]{gerhold2022dilations} Let $\theta,\theta'\in\qty[0,1],q=e^{2\pi i\theta},q'=e^{2\pi i\theta'}$ and $\gamma=\theta-\theta'$. The smallest constant $c_{\theta,\theta'}$ such that every pair of $q$-commuting unitaries can be dilated to $c_{\theta,\theta'}$ times a pair of $q'$-commuting unitaries is given by:
\[
	c_{\theta,\theta'}=c_{\gamma}=\frac{4}{\norm{u_{\gamma}+u_{\gamma}^*+v_{\gamma}+v_{\gamma}^*}},
\]
where $u_{\gamma},v_{\gamma}$ are the universal pair of $e^{i\gamma}$-commuting unitaries.
\end{thm}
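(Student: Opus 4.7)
The plan is to reformulate the question as a matrix-range inclusion problem, use a tensor symmetry to reduce to a single universal norm, and then match upper and lower bounds on that norm.

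First, the universal property of the rotation algebra shows that it is enough to understand when the universal $q$-commuting pair $(u_\theta,v_\theta)$ dilates to $c$ times a $q'$-commuting pair; every quotient will inherit the same scaling. Combining this with Theorem \ref{thm:DDSS}(2) and the Stinespring-based identification of dilations with UCP maps recalled before Theorem \ref{thm:equivalenceOfUnitaries}, $c_{\theta,\theta'}$ is the infimum of $c>0$ such that $\W{u_\theta,v_\theta}\subset c\cdot\W{u_{\theta'},v_{\theta'}}$.

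I would then observe that tensoring by an $e^{2\pi i\tau}$-commuting pair shifts the commutation parameter by $\tau$: if $(u,v)$ is $q$-commuting and $(a,b)$ is $e^{2\pi i\tau}$-commuting, then $(u\otimes a,v\otimes b)$ is $qe^{2\pi i\tau}$-commuting. Applied to suitably chosen universal pairs (and using that dilations tensor), this shows that $c_{\theta,\theta'}$ depends only on the difference $\gamma=\theta-\theta'$, and in fact reduces the computation to the dilation scaling $c_{\gamma,0}$ from the universal $e^{i\gamma}$-commuting pair to commuting unitary pairs. This reduction is important because the commuting case makes the extremal configuration concrete: the joint spectrum lives in $\mathbb{T}^2$.

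For the lower bound, I would use the self-adjoint test element $h(u,v)=u+u^*+v+v^*$: any pair of commuting unitaries $(z_1,z_2)$ satisfies $\|h(z_1,z_2)\|\leq 4$ by the spectral theorem on $\mathbb{T}^2$, so a dilation $u_\gamma=V^*(cz_1)V$, $v_\gamma=V^*(cz_2)V$ immediately gives $\|h(u_\gamma,v_\gamma)\|\leq 4c$, hence $c\geq 4/\|h(u_\gamma,v_\gamma)\|$. The matching upper bound is the real content and the main obstacle: one must produce an explicit pair of commuting unitaries on a larger Hilbert space whose compression realizes exactly $c_\gamma^{-1}(u_\gamma,v_\gamma)$. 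I expect this to be the technical heart of the argument, requiring a unitary dilation construction tailored to the noncommutative torus (in the spirit of Sz.-Nagy dilation combined with the translation symmetry of the torus), arranged so that the extremal functional witnessing tightness is exactly $h$; once this explicit dilation is in hand, the equality $c_\gamma=4/\|u_\gamma+u_\gamma^*+v_\gamma+v_\gamma^*\|$ follows.
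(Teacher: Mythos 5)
This statement is imported verbatim from \cite[Theorem 6.4]{gerhold2022dilations}; the present paper gives no proof of it, so your proposal has to stand on its own as a reconstruction of that reference. Your reductions are sound and do match the known strategy: passing to the universal pair, translating dilations into UCP maps and matrix-range inclusions via Theorem \ref{thm:DDSS}, and using the tensor identity $(u\otimes a)(v\otimes b)=qe^{2\pi i\tau}(v\otimes b)(u\otimes a)$ to see that only $\gamma=\theta-\theta'$ matters. The first genuine problem is your lower bound, whose inequality runs the wrong way. From $u_\gamma=cV^*z_1V$, $v_\gamma=cV^*z_2V$ with $z_1,z_2$ commuting, the compression gives $\norm{h(u_\gamma,v_\gamma)}\leq c\,\norm{h(z_1,z_2)}\leq 4c$, i.e.\ $c\geq\norm{h(u_\gamma,v_\gamma)}/4$ --- not $c\geq 4/\norm{h(u_\gamma,v_\gamma)}$. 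Since $\norm{h(u_\gamma,v_\gamma)}\leq 4$ always, the bound you actually derived is vacuous (it does not even force $c\geq 1$). The correct lower bound requires reversing the roles of the two pairs: tensoring the hypothesized dilation with the conjugate universal $q$-commuting pair shows that the \emph{universal commuting} pair $(z_1,z_2)$ --- for which $\norm{h(z_1,z_2)}=4$ exactly, its joint spectrum being all of $\mathbb{T}^2$ --- dilates to $c$ times a $(\pm\gamma)$-commuting pair $(w_1,w_2)$, whence $4=\norm{h(z_1,z_2)}\leq c\,\norm{h(w_1,w_2)}\leq c\,\norm{h_\gamma}$ and $c\geq 4/\norm{h_\gamma}$. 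It is precisely this reversal that places $\norm{h_\gamma}$ in the denominator, and it is absent from your sketch.

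The second problem is that the matching upper bound --- exhibiting a commuting (or $q'$-commuting) dilation of $\tfrac{4}{\norm{h_\gamma}}^{-1}$ times nothing, i.e.\ showing that $(u_\gamma,v_\gamma)$ dilates to $\tfrac{4}{\norm{h_\gamma}}$ times a commuting pair --- is deferred entirely ("once this explicit dilation is in hand, the equality follows"). That construction is the main content of the cited theorem, not a routine Sz.-Nagy-type step: it rests on an explicit dilation built from the structure of the rotation $C^*$-algebras and a sharpness analysis tied to the element $h_\gamma$. With one inequality derived incorrectly and the other not derived at all, the proposal is a reasonable outline of the setup but not a proof.
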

Setting $c_{\theta}:=c_{\theta,0}$, we get from the theorem that the existence of a unital isometry between $(u_{\theta},v_{\theta})$ and $(u_{\theta'},v_{\theta'})$ (where the two pairs are universal), implies that $c_{\theta}=c_{\theta'}$. We do not know if the converse is true in general, but we can deduce that whenever such an isometry cannot exist, the corresponding dilation constants must be different. The following corollary now follows easily from our now set of tools.
\begin{cor}
For $\theta\in\qty[0,1]$, $c_{\theta}=1$ if and only if $\theta=0$ or $\theta=1$.
\end{cor}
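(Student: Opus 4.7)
The plan is to combine the dilation-theoretic meaning of $c_\theta$ with Theorem \ref{thm:UnitariesUEP}. The easy direction is $(\Leftarrow)$: if $\theta \in \{0,1\}$ then $q = e^{2\pi i \theta} = 1$, so $q$-commuting means commuting. The universal pair is then realized as the coordinate functions $z_1, z_2 \in C(\mathbb{T}^2)$, and evaluation at $(1,1) \in \mathbb{T}^2$ shows $\|z_1 + \bar z_1 + z_2 + \bar z_2\| = 4$, so $c_\theta = 4/4 = 1$ by the formula in the quoted theorem.

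For $(\Rightarrow)$, suppose $c_\theta = 1$. Since $c_\theta$ is defined as the smallest admissible dilation constant, the universal $q$-commuting pair $(u_\theta, v_\theta)$ dilates with constant exactly $1$ to some pair $(u', v')$ of commuting unitaries; that is, $(u_\theta, v_\theta)$ is a compression of $(u', v')$. As noted in the remarks preceding Theorem \ref{thm:equivalenceOfUnitaries}, this compression induces a UCP map $\phi : \cS(u', v') \to \cS(u_\theta, v_\theta)$ with $\phi(u') = u_\theta$ and $\phi(v') = v_\theta$.

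Since $\phi$ sends a tuple of unitary generators to a tuple of unitary generators, Theorem \ref{thm:UnitariesUEP} applies: $\phi$ has the unique extension property and extends to a $*$-homomorphism $\Phi : C^*(u', v') \to B(\cH)$. Multiplicativity of $\Phi$, combined with the relation $u'v' = v'u'$, forces $u_\theta v_\theta = \Phi(u' v') = \Phi(v' u') = v_\theta u_\theta$. But by construction $u_\theta v_\theta = q\, v_\theta u_\theta$, so $(q-1) v_\theta u_\theta = 0$; since $v_\theta u_\theta$ is unitary we conclude $q = 1$, i.e.\ $\theta \in \{0,1\}$. The only nontrivial input is Theorem \ref{thm:UnitariesUEP}; once the translation from a constant-$1$ dilation to a genuine UCP map is in hand, no further obstacle remains.
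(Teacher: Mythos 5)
Your proof is correct and follows essentially the same route as the paper: both directions reduce to the observation that a constant-$1$ dilation yields a UCP map sending the commuting generators to the $q$-commuting ones, whose multiplicative extension forces $q=1$. The paper cites Theorem \ref{thm:qSeperation} (whose proof runs through Theorem \ref{thm:UnitariesUEP}), whereas you invoke Theorem \ref{thm:UnitariesUEP} directly — which is if anything cleaner, since the UCP map produced by the dilation points in exactly the direction that theorem requires — and your explicit computation of $\norm{z_1+\bar z_1+z_2+\bar z_2}=4$ for the easy direction is a harmless substitute for the paper's ``trivial dilation'' remark.
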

\begin{proof}
If $\theta=0$ or $\theta=1$, there is nothing to prove, because every pair of commuting unitaries is a dilation of itself. On the other hand, if a pair of $q$-commuting unitaries dilate to a pair of commuting unitaries (with $q'=1$), we can invoke Theorem \ref{thm:qSeperation} (because a the dilation implies a completely positive map which maps the commuting unitaries to the $q$-commuting unitaries) and get that $q'=q$ so that $\theta=0$ or $\theta=1$. 
\end{proof}
When $\theta\neq 0,1$, there are still some cases of two different angles which correspond to the same dilation constant. For example, for all $\theta\in\qty[0,1]$, it is known that $c_{\theta}=c_{1-\theta}$. In the following proposition we provide another proof for this fact, and we also give an example of a large family of $1$-order isomorphic but not completely order isomorphic $q$-commuting pairs.
\begin{prop}
Let $\theta\in\qty[0,2\pi]$. Then, the mapping $\theta\mapsto u_{1-\theta},v_{\theta}\mapsto v_{1-\theta}$ extends to a unital isometry of the operator systems. Moreover, $c_{\theta}=c_{1-\theta}$.
\end{prop}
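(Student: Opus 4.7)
The plan is to realize the desired isometry as the restriction of a $C^*$-algebraic $*$-anti-isomorphism obtained via the opposite algebra. The key observation is that if $q = e^{2\pi i\theta}$ and $(u_{1-\theta},v_{1-\theta})$ is $\bar q$-commuting, then in the opposite algebra $C^*(u_{1-\theta},v_{1-\theta})^{\mathrm{op}}$ (same underlying $*$-normed vector space, with reversed multiplication) the same pair becomes $q$-commuting, since $u_{1-\theta}\cdot_{\mathrm{op}} v_{1-\theta}=v_{1-\theta}u_{1-\theta}=q\,u_{1-\theta}v_{1-\theta}=q\,v_{1-\theta}\cdot_{\mathrm{op}} u_{1-\theta}$.

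I would first invoke the universal property of $C^*(u_\theta,v_\theta)$ for $q$-commuting pairs to obtain a unital $*$-homomorphism $\tilde{\Psi}: C^*(u_\theta,v_\theta) \to C^*(u_{1-\theta},v_{1-\theta})^{\mathrm{op}}$ sending $u_\theta \mapsto u_{1-\theta}$ and $v_\theta \mapsto v_{1-\theta}$. A mirror construction (viewing $(u_\theta,v_\theta)$ inside $C^*(u_\theta,v_\theta)^{\mathrm{op}}$ as $\bar q$-commuting and invoking the universal property of $C^*(u_{1-\theta},v_{1-\theta})$) produces an inverse, so $\tilde\Psi$ is in fact a $*$-isomorphism, and in particular isometric.

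Next, I would compose $\tilde\Psi$ with the identity map $\iota: C^*(u_{1-\theta},v_{1-\theta})^{\mathrm{op}} \to C^*(u_{1-\theta},v_{1-\theta})$, which is a linear, unital, $*$-preserving, isometric, anti-multiplicative bijection. The composition $\Phi:=\iota\circ\tilde\Psi$ is then a linear, unital, $*$-preserving, isometric (anti-multiplicative) bijection from $C^*(u_\theta,v_\theta)$ onto $C^*(u_{1-\theta},v_{1-\theta})$ sending $u_\theta\mapsto u_{1-\theta}$ and $v_\theta\mapsto v_{1-\theta}$. Restricting to the five-dimensional operator system $\cS(u_\theta,v_\theta)=\operatorname{span}\{I,u_\theta,v_\theta,u_\theta^*,v_\theta^*\}$ yields the required unital isometry onto $\cS(u_{1-\theta},v_{1-\theta})$; the anti-multiplicativity of $\Phi$ is invisible on this subspace because no products of generators appear inside it.

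For the equality $c_\theta=c_{1-\theta}$, the isometry $\Phi$ sends the self-adjoint element $u_\theta+u_\theta^*+v_\theta+v_\theta^*$ to $u_{1-\theta}+u_{1-\theta}^*+v_{1-\theta}+v_{1-\theta}^*$ with equal norm, so the formula $c_\gamma=4/\norm{u_\gamma+u_\gamma^*+v_\gamma+v_\gamma^*}$ of the cited dilation theorem of Gerhold et al.\ immediately gives $c_\theta=c_{1-\theta}$. The hardest step I anticipate is the careful bookkeeping around opposite algebras: verifying that $C^*(u_{1-\theta},v_{1-\theta})^{\mathrm{op}}$ is indeed a $C^*$-algebra with the same norm (a standard consequence of the $C^*$-identity $\|a^*a\|=\|a\|^2$, which does not see the order of multiplication of $a$ and $a^*$) and that it inherits the correct universal property for $q$-commuting unitaries via the bijection between $*$-homomorphisms out of $A^{\mathrm{op}}$ and $*$-antihomomorphisms out of $A$. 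Once this formalism is in place, the argument reduces to composing standard maps.
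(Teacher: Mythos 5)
Your proposal is correct, and it reaches the same destination as the paper by a more abstract route. The underlying idea in both arguments is the same: a unital, $*$-preserving, isometric but \emph{anti}-multiplicative map cannot distinguish a $q$-commuting pair from a $\bar q$-commuting pair, and its restriction to ${\rm span}\{1,u,v,u^*,v^*\}$ is blind to the failure of multiplicativity. The paper implements this concretely via the transpose map: it splits into the rational case (transposing the standard $n\times n$ representation of Proposition \ref{prop:RationalRepresentations2} and then assembling the universal pair as a direct sum over $\mathbb{T}^2$) and the irrational case (transposing an explicit universal pair of bilateral-shift type on $\ell^2(\mathbb{Z})$, citing Boca for universality). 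You instead pass to the opposite algebra and invoke the universal property of $C^*(u_\theta,v_\theta)$, which buys a uniform treatment of all $\theta$ with no case split and no need to exhibit concrete universal representations; the price is the routine but necessary bookkeeping you already flag, namely that $A^{\mathrm{op}}$ is a $C^*$-algebra with the same norm and that $*$-homomorphisms out of $A^{\mathrm{op}}$ correspond to $*$-antihomomorphisms out of $A$ (and that the uniqueness clause of the universal property forces the two compositions of $\tilde\Psi$ with its mirror to be the identity). Your explicit derivation of $c_\theta=c_{1-\theta}$ from the norm formula is exactly the remark the paper makes just before the proposition. One small caveat: your argument genuinely requires that $(u_\theta,v_\theta)$ and $(u_{1-\theta},v_{1-\theta})$ be the \emph{universal} pairs, since otherwise neither the universal property nor its uniqueness clause is available; this is indeed the standing convention in the surrounding discussion, but it is worth saying explicitly.
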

Before giving the proof, note that for all $n\in\mathbb{N}$, we have that the universal pair of $q_{1,n}$-commuting and the universal pair of $q_{(n-1),n}$-commuting unitaries are $1$-order equivalent but not completely order equivalent. This gives a family of examples for $n$-Subhomogeneous operator systems which are $1$-order isomorphic but not $n$-order isomorphic (and in particular, not completely order isomorphic). We now move to the proof of the proposition.
\begin{proof}
Assume first that $\theta=\frac{k}{n}$ for coprime $k,n\in\mathbb{N}$ and set $q=e^{2\pi i\theta},q'=e^{2\pi i(1-\theta)}$. Considering $U,V$ the standard representation as in (\ref{eqn:cannonicalFormUV}), we see that if $UV=qVU$, then:
\[
	U^tV^t=\qty(VU)^t=\bar{q}\qty(UV)^t=q'V^tU^t,
\]
which means that $U^t,V^t$ is a pair of $q'$-commuting unitaries. Moreover, the mapping $\phi:\mat{n}\to\mat{n}$ given by $\phi(a)=a^t$ is a unital isometry, which means that its restriction of $\cS(U,V)$ is a unital isometry. Next, we let $U_{\theta},V_{\theta}$ be the universal pair of $q$-commuting unitaries, which is given by:
\[
U_{\theta}=\bigoplus\limits_{\alpha,\beta\in\mathbb{T}}U_{\theta,\alpha,\beta},\quad V_{\theta}=\bigoplus\limits_{\alpha,\beta\in\mathbb{T}}V_{\theta,\alpha,\beta}
\]
where $U_{\theta,\alpha,\beta}=\alpha U$ and $V_{\theta,\alpha,\beta}=\beta V$. In that case, the mapping $U_{\theta}\mapsto U_{\theta}$ and:
\[
	V_{\theta}\mapsto\bigoplus\limits_{\alpha,\beta\in\mathbb{T}}V_{\theta,\alpha,\beta}^t
\]
is a unital isometry from $\cS(U_{\theta},V_{\theta})$ to $\cS(U_{\theta},V_{\theta}^t)$. But $U_{\theta},V_{\theta}^t$ are a universal pair of $\bar{q}=q'$ commuting unitaries, which completes the proof for the rational case.\par Assuming that $\theta$ is irrational, we consider the following pair of infinite matrices:
\[	u=\qty(q^n\delta_{n,m})_{n,m\in\mathbb{Z}},\quad v=\qty(\delta_{n,m+1})_{n,m\in\mathbb{Z}}
\]
which act on $\ell_2\qty(\mathbb{Z})$. Those are clearly $q$-commuting unitaries, and they form a universal pair (by \cite[Theorem 1.9 and 1.10]{boca2001rotation}). Working similarly to the rational case, we have that the transpose map $u\mapsto u^t=u$ and $v\mapsto v^t$, defines a unital isometry of the generated operator systems, where the second operator system is generated by $u,v^t$, which is a universal pair of $q'=\bar{q}$-commuting unitaries. This completes the proof.
\end{proof}
\bibliographystyle{plain}
\bibliography{ref}

\begin{thebibliography}{10}

\bibitem{Arveson69}
William~B. Arveson.
\newblock Subalgebras of {C}*-algebras.
\newblock {\em Acta Mathematica}, 123:141--224, 1969.

\bibitem{arveson2008noncommutative}
William~B. Arveson.
\newblock The noncommutative {C}hoquet boundary.
\newblock {\em Journal of the American Mathematical Society}, 21(4):1065--1084,
  2008.

\bibitem{boca2001rotation}
Florin-Petre Boca.
\newblock {\em Rotation {C}*-algebras and almost Mathieu operators}.
\newblock The Theta Foundation, Bucharest, 2001.

\bibitem{choi1972positive}
Man-Duen Choi.
\newblock Positive linear maps on {C}*-algebras.
\newblock {\em Canadian Journal of Mathematics}, 24(3):520--529, 1972.

\bibitem{DDSS}
Kenneth~R. Davidson, Adam Dor-On, Orr~M. Shalit, and Baruch Solel.
\newblock Dilations, inclusions of matrix convex sets, and completely positive
  maps.
\newblock {\em International Mathematics Research Notices},
  2017(13):4069--4130, 2017.

\bibitem{davidson2015choquet}
Kenneth~R. Davidson and Matthew Kennedy.
\newblock The {C}hoquet boundary of an operator system.
\newblock {\em Duke Mathematical Journal}, 164(15):2989--3004, 2015.

\bibitem{dritschel2005boundary}
Michael~A. Dritschel and Scott~A. McCullough.
\newblock Boundary representations for families of representations of operator
  algebras and spaces.
\newblock {\em Journal of Operator Theory}, pages 159--167, 2005.

\bibitem{evert2018extreme}
Eric Evert, J.~William Helton, Igor Klep, and Scott McCullough.
\newblock Extreme points of matrix convex sets, free spectrahedra, and dilation
  theory.
\newblock {\em The Journal of Geometric Analysis}, 28(2):1373--1408, 2018.

\bibitem{farenick2000extremal}
Douglas~R. Farenick.
\newblock Extremal matrix states on operator systems.
\newblock {\em Journal of the London Mathematical Society}, 61(3):885--892,
  2000.

\bibitem{farenick2022purity}
Douglas~R. Farenick and Ryan Tessier.
\newblock Purity of the embeddings of operator systems into their {C}*-and
  injective envelopes.
\newblock {\em Pacific Journal of Mathematics}, 317(2):317--338, 2022.

\bibitem{gerhold2022dilations}
Malte Gerhold and Orr M.~Shalit.
\newblock Dilations of q-commuting unitaries.
\newblock {\em International Mathematics Research Notices}, 2022(1):63--88,
  2022.

\bibitem{gerhold2021dilations}
Malte Gerhold, Satish~K. Pandey, Orr~M. Shalit, and Baruch Solel.
\newblock Dilations of unitary tuples.
\newblock {\em Journal of the London Mathematical Society}, 104(5):2053--2081,
  2021.

\bibitem{hartz2021dilation}
Michael Hartz and Martino Lupini.
\newblock Dilation theory in finite dimensions and matrix convexity.
\newblock {\em Israel Journal of Mathematics}, 245(1):39--73, 2021.

\bibitem{kleski2014boundary}
Craig Kleski.
\newblock Boundary representations and pure completely positive maps.
\newblock {\em Journal of Operator Theory}, pages 45--62, 2014.

\bibitem{Kriel}
Tom-Lukas Kriel.
\newblock An introduction to matrix convex sets and free spectrahedra.
\newblock {\em Complex Analysis and Operator Theory}, 13(7):3251--3335, 2019.

\bibitem{loebl1981some}
Richard~I. Loebl and Vern~I. Paulsen.
\newblock Some remarks on {C}*-convexity.
\newblock {\em Linear Algebra and its Applications}, 35:63--78, 1981.

\bibitem{muhly1998algebraic}
Paul~S. Muhly and Baruch Solel.
\newblock An algebraic characterization of boundary representations.
\newblock In {\em Nonselfadjoint operator algebras, operator theory, and
  related topics}, pages 189--196. Springer, 1998.

\bibitem{passer2018minimal}
Benjamin Passer, Orr~M. Shalit, and Baruch Solel.
\newblock Minimal and maximal matrix convex sets.
\newblock {\em Journal of Functional Analysis}, 274(11):3197--3253, 2018.

\bibitem{paulsen}
Vern Paulsen.
\newblock {\em Completely Bounded Maps and Operator Algebras}.
\newblock Cambridge Studies in Advanced Mathematics. Cambridge University
  Press, 2002.

\bibitem{Stinespring}
W.~Forrest Stinespring.
\newblock Positive functions on {C}*-algebras.
\newblock {\em Proceedings of the American Mathematical Society},
  6(2):211--216, 1955.

\bibitem{tomiyama1985geometry}
Jun Tomiyama.
\newblock On the geometry of positive maps in matrix algebras. ii.
\newblock {\em Linear algebra and its applications}, 69:169--177, 1985.

\bibitem{WebWink}
Corran Webster and Soren Winkler.
\newblock The {K}rein-{M}ilman theorem in operator convexity.
\newblock {\em Transactions of the American Mathematical Society},
  351(1):307--322, 1999.

\end{thebibliography}
\end{document}